\documentclass[11pt]{article}

\usepackage[margin=1in]{geometry}
\usepackage[T1]{fontenc}
\usepackage{lmodern}
\usepackage{amsmath,amssymb,amsfonts,amsthm,mathtools,mathrsfs}
\usepackage{graphicx}
\usepackage{booktabs}
\usepackage{enumitem}
\usepackage{xcolor}
\usepackage{hyperref}
\usepackage{microtype}
\usepackage{indentfirst}
\usepackage{longtable}
\usepackage{multirow}
\usepackage{bookmark}
\usepackage{aliascnt}
\usepackage{authblk}
\allowdisplaybreaks[2]
\hypersetup{colorlinks=true,linkcolor=blue!50!black,citecolor=blue!50!black,urlcolor=blue!50!black}

\numberwithin{equation}{section}
\newtheorem{theorem}{Theorem}[section]

\newaliascnt{lemma}{theorem}
\newtheorem{lemma}[lemma]{Lemma}
\aliascntresetthe{lemma}
\newaliascnt{proposition}{theorem}
\newtheorem{proposition}[proposition]{Proposition}
\aliascntresetthe{proposition}
\newaliascnt{corollary}{theorem}
\newtheorem{corollary}[corollary]{Corollary}
\aliascntresetthe{corollary}
\newaliascnt{claim}{theorem}

\aliascntresetthe{claim}
\newaliascnt{definition}{theorem}
\newtheorem{definition}[definition]{Definition}
\aliascntresetthe{definition}
\newaliascnt{assumption}{theorem}

\aliascntresetthe{assumption}
\newaliascnt{conjecture}{theorem}
\newtheorem{conjecture}[conjecture]{Conjecture}
\aliascntresetthe{conjecture}
\newaliascnt{problem}{theorem}
\newtheorem{problem}[problem]{Problem}
\aliascntresetthe{problem}
\newaliascnt{question}{theorem}

\aliascntresetthe{question}
\newaliascnt{fact}{theorem}
\newtheorem{fact}[fact]{Fact}
\aliascntresetthe{fact}
\newaliascnt{example}{theorem}

\aliascntresetthe{example}
\theoremstyle{remark}
\newaliascnt{remark}{theorem}
\newtheorem{remark}[remark]{Remark}
\aliascntresetthe{remark}

\usepackage[nameinlink,capitalise]{cleveref}
\crefname{theorem}{Theorem}{Theorems}
\Crefname{theorem}{Theorem}{Theorems}
\crefname{lemma}{Lemma}{Lemmas}
\Crefname{lemma}{Lemma}{Lemmas}
\Crefname{fact}{Fact}{Fact}
\Crefname{fact}{Fact}{Fact}
\crefname{proposition}{Proposition}{Propositions}
\Crefname{proposition}{Proposition}{Propositions}
\crefname{corollary}{Corollary}{Corollaries}
\Crefname{corollary}{Corollary}{Corollaries}
\crefname{claim}{Claim}{Claims}
\Crefname{claim}{Claim}{Claims}
\crefname{definition}{Definition}{Definitions}
\Crefname{definition}{Definition}{Definitions}
\crefname{assumption}{Assumption}{Assumptions}
\Crefname{assumption}{Assumption}{Assumptions}
\crefname{section}{Section}{Sections}
\Crefname{section}{Section}{Sections}
\crefname{equation}{Equation}{Equations}
\Crefname{equation}{Equation}{Equations}
\crefname{problem}{Problem}{Problems}
\Crefname{problem}{Problem}{Problems}
\crefname{remark}{Remark}{Remarks}
\Crefname{remark}{Remark}{Remarks}

\DeclareMathOperator{\ex}{ex}
\DeclareMathOperator{\spex}{spex}

\begin{document}
\baselineskip=16pt
\title{Unbalanced Tur\'an and spectral Tur\'an problems with prescribed large maximum degree
	\footnote{Corresponding author: Chang Liu (liuchang\_@nudt.edu.cn)}
}
\author{Chang Liu}
\affil{College of Sciences, National University of Defense Technology,\linebreak Changsha 410073, China}
\date{ }
\maketitle

\begin{abstract}
Classical Tur\'an-type problems determine the maximum number of edges and spectral radius of an $n$-vertex $F$-free graph without a degree constraint.  We study the corresponding problems in the class of $n$-vertex $F$-free graphs $G$ with prescribed maximum degree $\Delta(G)=\Delta$.  Let $\chi(F)=r+1\ge3$ and $\lceil(r-1)n/r\rceil\le\Delta\le n-1$.  The maximum-degree condition leads to the complete $r$-partite graph
$S_{n,\Delta}^{(r)}=(n-\Delta)K_1\vee T(\Delta,r-1)$, whose part of size $n-\Delta$ is generally smaller than the other parts; this is the source of the unbalanced Tur\'an problem considered here.  Let $\ex_F(n,\Delta)$ and $\spex_F(n,\Delta)$ denote the maximum number of edges and adjacency spectral radius, respectively, in this class.  For $F=K_{r+1}$, we prove that $S_{n,\Delta}^{(r)}$ is the unique extremal graph for both parameters.  For a general graph $F$, let $a(F)$ be the minimum size of an independent set $I$ such that $\chi(F-I)\le r$.  If $a(F)=1$, we prove edge and spectral stability with respect to $S_{n,\Delta}^{(r)}$.  If $a(F)>1$, the extremal values have the usual Erd\H{o}s--Stone--Simonovits asymptotics, and the edge- and spectral-extremal graphs are $o(n^2)$-close to $T(n,r)$.  Finally, for a finite forbidden family, we prove that a decomposition-family edge bound of order $O(n^{1+s})$ yields a spectral-radius bound with error term $O(n^s)$, where $0\le s<1$.  This can be used to obtain spectral-radius estimates from decomposition-family bounds in other unbalanced Tur\'an problems.
\end{abstract}

\noindent\textbf{AMS subject classifications.} 05C35; 05C50; 05C69.\par
\noindent\textbf{Keywords.} Extremal graph theory; Spectral graph theory; Unbalanced Tur\'an problem; Maximum degree; Stability

\section{Introduction}
 Extremal graph theory studies how the exclusion of prescribed local configurations constrains global graph parameters. Given a forbidden graph $F$, the classical extremal problem asks for the largest number of edges that an $F$-free graph may contain, while a finer structural question seeks to characterize the graphs achieving this bound.  Tur\'an-type theory provides the basic framework for such questions; see \cite{Bollobas,Furedi-Simonovits,Simonovits-2,Zhao}.  A complementary perspective is supplied by stability theory: in many extremal problems, graphs whose edge numbers are asymptotically close to the optimum must also be structurally close to the corresponding extremal constructions; see \cite{Simonovits-1}.

All graphs considered in this paper are finite, simple, and undirected.  A graph $G$ is called $F$-free if it contains no copy of $F$ as a subgraph, and we write $\ex(n,F)$ for the maximum number of edges in an $n$-vertex $F$-free graph.  For vertex-disjoint graphs $G$ and $H$, write $G\vee H$ for their join.  Let $T(n,r)$ be the balanced complete $r$-partite graph on $n$ vertices and put $t(n,r)=e(T(n,r))$.  Tur\'an's theorem \cite{Turan}
determines $\ex(n,K_{r+1})$, and the Erd\H{o}s--Stone--Simonovits theorem \cite{Erdos-Stone,ErdosSimonovits} gives, for every graph $F$ with $\chi(F)=r+1\ge3$,
\begin{equation*}
	\ex(n,F)=\left(1-\frac1r+o(1)\right)\frac{n^2}{2}.
\end{equation*}
The corresponding stability theorem of Erd\H{o}s and Simonovits says that near-extremal $F$-free graphs are close to the Tur\'an graph $T(n,r)$ \cite{Erdos,Simonovits-1}; see also \cite{Bollobas,Furedi-Simonovits}.  Thus, without additional constraints, both the extremal value and the stable
structure are governed by the chromatic number of $F$.

Spectral extremal graph theory concerns Tur\'an-type problems in which the number of edges is replaced by the adjacency spectral radius $\rho(G)$.  Wilf's spectral Tur\'an theorem \cite{Wilf} and Nikiforov's spectral Erd\H{o}s--Stone--Bollob\'as theorem \cite{Nikiforov-ESS} show that the same chromatic threshold governs the first-order spectral problem.  Nikiforov's stability and saturation results \cite{Nikiforov-1,Nikiforov-2} also give exact
spectral consequences for color-critical forbidden graphs.  A related line fixes the number of edges rather than the number of vertices; see \cite{Brualdi-Hoffman,Hong-Shu-Fang,Nikiforov-2002,Stanley} and the recent edge-spectral work of Li, Liu and Zhang
\cite{Li-Liu-Zhang-ESS,Li-Liu-Zhang-color-critical}.

%Byrne, Desai and Tait \cite{Byrne-Desai-Tait} obtained general criteria relating edge-extremal and spectral-extremal graphs.  Fang, Tait and Zhai \cite{Fang-Tait-Zhai} used Simonovits decomposition families to characterize extremal graphs near the Tur\'an threshold.  These results motivate the spectral transfer theorem proved below.

A related direction is to prescribe a local degree parameter.  Balister,
Bollob\'as, Riordan and Schelp \cite{Balister-Bollobas-Riordan-Schelp}
determined the edge extremum for $C_{2k+1}$-free graphs with prescribed large
maximum degree.
\begin{theorem}[Balister, Bollob\'as, Riordan and Schelp \cite{Balister-Bollobas-Riordan-Schelp}]
	If $G$ is an $n$-vertex $C_{2k+1}$-free graph with $\frac{n}{2}\leq \Delta(G)\leq n-k-1$, then, for sufficiently large $n$, $e(G)\leq \Delta(G)(n-\Delta(G))$ with equality if and only if $G$ is a complete bipartite graph.
\end{theorem}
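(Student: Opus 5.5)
We fix $\Delta=\Delta(G)$ with $n/2\le\Delta\le n-k-1$ and a vertex $v$ with $d(v)=\Delta$. Put $A=N(v)$ and $B=V(G)\setminus A$, so $|A|=\Delta$, $|B|=n-\Delta$ and $v\in B$. The target bound $\Delta(n-\Delta)$ is exactly the edge count of $K_{\Delta,n-\Delta}$ with parts $A$ and $B$. The plan is therefore to show that, once $n$ is large, $G$ is bipartite with a bipartition very close to $(A,B)$, and then to count edges.

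\textbf{Step 1 (sparsity inside $A$).} Two vertices $x,y\in A$ are joined by the path $xvy$ of length $2$, so an $(x,y)$-path of length $2k-1$ avoiding $v$ closes a $C_{2k+1}$. Hence $G[A]$ contains no path of length $2k-1$ between adjacent vertices, and more generally $G-v$ has no $(x,y)$-path of length $2k-1$ for $x,y\in A$. By Erd\H{o}s--Gallai, $e(G[A])=O(k\Delta)$. The same path-closing idea, applied with long paths, controls edges inside $B$ and gives $e(G[B])=o(n^2)$ provided $G$ has many edges.

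\textbf{Step 2 (assume $e(G)\ge\Delta(n-\Delta)$ and obtain stability).} Since $\Delta\ge n/2$ and $\Delta\le n-k-1$, we have $\Delta(n-\Delta)\ge (k+1)(n-k-1)$. This is only linear in $n$ when $\Delta$ is near $n$, so two regimes arise.
\begin{itemize}
\item If $n-\Delta\ge\varepsilon n$, then $e(G)=\Omega(n^2)$. Erd\H{o}s--Simonovits stability for $C_{2k+1}$ together with removal of low-degree vertices yields a bipartite subgraph of $G$ that misses $o(n^2)$ edges. A standard cleaning argument then shows $G$ is bipartite, since any odd cycle through the dense bipartite core could be rerouted to length exactly $2k+1$. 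With $G$ bipartite and each degree at most $\Delta$, the part containing $v$ has size at most $n-\Delta$ or the degrees force the bound. The bound $e(G)\le |X||Y|$ with the degree cap then gives $e(G)\le\Delta(n-\Delta)$; equality requires $K_{|X|,|Y|}$ with $\{|X|,|Y|\}=\{\Delta,n-\Delta\}$.
\item If $n-\Delta=o(n)$, stability is useless. Instead we count directly: $e(G)\le e(A,B)+e(G[A])+e(G[B])$. The sum of degrees from $B$ is at most $|B|\Delta$, and we show $e(G[A])+e(G[B])$ must be compensated by missing $A$--$B$ edges. Each edge $xy$ inside $A$ forbids, for $w\in B$ adjacent to many of $A$, long odd cycles. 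Because $|B|\ge k+1$, the vertices of $B$ with large $A$-degree plus $v$ give enough disjoint length-2 connectors to build a $C_{2k+1}$ from any edge in $A$ once at least $k$ of the $B$-vertices have degree near $\Delta$.
\end{itemize}

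\textbf{Step 3 (equality).} When equality holds, every inequality in Step 2 is tight, forcing $G[A]$ and $G[B]$ to be empty and $e(A,B)=\Delta(n-\Delta)$, i.e.\ $G=K_{\Delta,n-\Delta}$.

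The main obstacle is the near-star regime $\Delta=n-O(1)$, down to $n-\Delta=k+1$. Here a few edges inside $A$ might seemingly outweigh the loss, and the precise threshold $n-k-1$ must be used. For this case I would argue by a weighting: each vertex $w\in B$ has degree $d_A(w)+d_B(w)\le\Delta$. If $G[A]$ has an edge $xy$, I would build paths $x\,b_1\,a_1\,b_2\cdots$ that alternate through $B$ and common neighbourhoods in $A$, choosing the number of alternations to hit length $2k+1$. This shows that any edge in $A$ costs at least one $A$--$B$ non-edge per extra edge. Verifying that the alternation can always be completed when $|B|\ge k+1$ is the step I expect to need the most care.
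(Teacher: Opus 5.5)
The paper gives no proof of this statement; it is quoted from Balister, Bollob\'as, Riordan and Schelp. Your proposal therefore has to stand on its own, and as written it does not: the route through your first regime is invalid, and the step that carries the theorem is deferred.

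\textbf{The first regime.} Keep your notation $A=N(v)$, $B=V(G)\setminus A$, $q=n-\Delta$, and let $M$ be the number of non-adjacent pairs between $A$ and $B$.
\begin{itemize}
\item Erd\H{o}s--Simonovits stability needs $e(G)\ge n^2/4-o(n^2)$. But $\Delta(n-\Delta)\le(\tfrac14-c^2)n^2$ once $\Delta\ge(\tfrac12+c)n$, so stability says nothing in most of the range $q\ge\varepsilon n$. Dense $C_{2k+1}$-free graphs need not be close to bipartite either (blow-ups of $C_{2k+3}$).
\item The closeness you want comes from the degree-defect identity of \cref{lem:root-defect-main}, with $A$ and $B$ interchanged. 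It gives $e(G)=\Delta q+e(G[A])-\bigl(M-e(G[B])\bigr)$ and $M\ge 2e(G[B])$. So $e(G)\ge\Delta q$ forces $M\le 2e(G[A])\le 2(k-1)\Delta$ by Erd\H{o}s--Gallai, for every $q$.
\item Even then, ``a standard cleaning argument shows $G$ is bipartite'' is not a proof. Take three vertices of $A$ whose only neighbour in $B$ is $v$ and which span a triangle. This gives a non-bipartite $C_{2k+1}$-free graph ($k\ge2$) within $3q$ edits of $K_{\Delta,q}$. So the edge count has to be used quantitatively.
\item For $q\ge Ck^2$ this can be done. Call a vertex bad if it misses more than a $1/(4k)$ fraction of the other side. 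Fewer than $8k(k-1)$ vertices of $B$ and fewer than $8k(k-1)\Delta/q$ vertices of $A$ are bad. If $xy\in E(G[A])$ and $y$ is good, a greedy choice among good vertices closes the cycle $x\,v\,a_1b_1\cdots a_{k-1}b_{k-1}\,y\,x$ of length $2k+1$. Hence all edges of $G[A]$ lie among the $\beta$ bad vertices of $A$. Then $e(G[A])\le(k-1)\beta$ while $M>\beta q/(4k)$, which contradicts $M\le 2e(G[A])$ unless $\beta=0$. Finally, $\beta=0$ forces $G=K_{\Delta,q}$.
\end{itemize}

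\textbf{The genuine gap: the bounded range $k+1\le q<Ck^2$.} Your second bullet and final paragraph explicitly leave this open. What must be proved there is
\[
e(G[A])\le e(G[B])+\sum_{b\in B}(\Delta-d(b)),
\]
which is equivalent to $e(G)\le\Delta q$. Per-edge charging by alternating paths cannot supply this.
\begin{itemize}
\item An edge $xy$ of $G[A]$ with $N(x)=\{v,y\}$ and $N(y)=\{v,x\}$ lies on no cycle except the triangle $xyv$. So even your partial claim, a $C_{2k+1}$ from any edge of $A$ once $k$ vertices of $B$ have degree near $\Delta$, is false.
\item When the vertices of $B\setminus\{v\}$ have few or no neighbours in $A$, there is nothing to alternate through at all. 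Yet $G[A]$ can still carry about $(k-1)\Delta$ edges.
\end{itemize}
Such edges can only be paid for globally. One must trade an extremal bound for $G[A]$, which depends on how many vertices of $B$ are richly joined to $A$, against the total deficiency $\sum_{b\in B}(\Delta-d(b))$. This trade-off is tight enough that the hypothesis $q\ge k+1$ is essential. For $q=k$, with $(2k-1)\mid\Delta$, consider the disjoint union of $K_1\vee(\text{disjoint copies of }K_{2k-1})$ and $K_{k-1}$. It is $C_{2k+1}$-free with maximum degree $\Delta=n-k$ and has $k\Delta+\binom{k-1}{2}>\Delta(n-\Delta)$ edges when $k\ge3$. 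Your proposal gives no argument for the displayed inequality in this range, so Steps 2 and 3 are unsupported exactly where the theorem is delicate.
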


A graph $F$ is called color-critical if there exists an edge $e\in E(F)$ such
that $\chi(F-e)<\chi(F)$.  Such an edge is called critical.  In particular, if
$\chi(F)=r+1$, then $\chi(F-e)=r$ for every critical edge $e$.  Odd cycles are
color-critical.  Huo and Yuan \cite{Huo-Yuan} subsequently treated
color-critical graphs.
 \begin{theorem}[Huo and Yuan \cite{Huo-Yuan}]
 	Let $F$ be a color-critical graph with $\chi(F)=r+1\geq 4$. There exists a non-negative constant $s_F<1$ depending on $F$ such that the following holds for sufficiently large $n$. If $G$ is an $F$-free graph with $\lceil (r-1)n/r\rceil \leq \Delta(G)\leq n-\Theta(n^{s_F})$, then $e(G)\leq \Delta(G)(n-\Delta(G))+t(\Delta(G),r-1)$, with equality if and only if $G\cong (n-\Delta(G))K_1\vee T(\Delta(G),r-1)$.
 \end{theorem}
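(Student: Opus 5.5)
We prove the Huo--Yuan bound by combining the Erd\H{o}s--Simonovits stability theorem with a cleaning argument around a vertex of maximum degree. Write $\Delta=\Delta(G)$, let $v$ be a vertex with $d(v)=\Delta$, and put $A=N(v)$ and $B=V(G)\setminus A$, so $|B|=n-\Delta$. The target value $\Delta(n-\Delta)+t(\Delta,r-1)$ equals $e(S^{(r)}_{n,\Delta})$, and it is at least $t(n,r)-O(n)$ throughout the allowed range of $\Delta$. Hence we may restrict to graphs $G$ with $e(G)\ge \Delta(n-\Delta)+t(\Delta,r-1)$ and aim to show $G\cong S^{(r)}_{n,\Delta}$.

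The first step is to obtain a partition. By stability for color-critical $F$ (Simonovits), $G$ has a partition $V_1,\dots,V_r$ with $o(n^2)$ edges inside the parts, and each $|V_i|=n/r+o(n)$. We then move to a max-cut partition and delete the bad vertices, namely those with $\Omega(n)$ neighbours in their own part. A standard argument shows that few vertices are bad, with $O(n^{s_F})$-type control coming from the supersaturation and embedding lemma for color-critical graphs. Among the good vertices, any edge inside a part together with the large common neighbourhoods across parts would produce $F$, since $F$ minus a critical edge is $r$-colourable and embeds into a blow-up. So the good part is $r$-partite.

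The second step uses the maximum-degree vertex. The vertex $v$ has at most $n-\Delta\le n/r$ non-neighbours. If $v$ is good, say $v\in V_1$, then $A$ essentially covers $V_2\cup\dots\cup V_r$ and $B\supseteq V_1$ up to small error. If $v$ is bad, then $v$ has many neighbours in every part. We then want to rule out $F$-freeness, or show that such a configuration loses edges, which will need the condition $\Delta\le n-\Theta(n^{s_F})$. Next we count edges: $e(G)\le e(A,B)+e(G[A])+e(G[B])$, with $e(A,B)\le \Delta(n-\Delta)$ trivially by degrees since each vertex of $B$ has degree at most $\Delta$. The main claim to establish is $e(G[B])=0$ and that $G[A]$ is $K_r$-free, or more precisely $F'$-free for the relevant subgraph, so that Tur\'an gives $e(G[A])\le t(\Delta,r-1)$. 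We would argue this through the refined partition: vertices of $B$ are nearly complete to $A$, so a $K_r$-structure in $A$ together with a vertex of $B$ (or an edge inside $B$ joined to $A$) yields $F$, using color-criticality and $\chi(F)=r+1\ge4$.

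The main obstacle is the degree-sum accounting when the bound is tight. The deficiency $\sum_{u\in B}(\Delta-d_A(u))$ must compensate exactly for any edges inside $B$ and any excess in $G[A]$ over $t(\Delta,r-1)$. This is delicate for vertices of $B$ with low degree into $A$, which is why the upper bound on $\Delta$ loses $\Theta(n^{s_F})$. We would first try a weighted exchange: replace $G$ by symmetrizing the vertices of $B$ onto the best vertex, which does not decrease the edge count and preserves $F$-freeness by Zykov symmetrization within the stable structure. We would then show that equality forces $B$ to be independent and complete to $A$, and that $G[A]=T(\Delta,r-1)$ by the uniqueness in Tur\'an's theorem.
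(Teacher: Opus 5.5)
The paper quotes this theorem from Huo--Yuan and does not reprove it, so there is no in-paper proof to compare against. Judged on its own, your plan has a genuine gap at the first step. You claim that $e(S^{(r)}_{n,\Delta})\ge t(n,r)-O(n)$ throughout the range, and then apply Simonovits' stability with respect to $T(n,r)$. But $S^{(r)}_{n,\Delta}$ is complete $r$-partite with one part of size $q=n-\Delta$ and $r-1$ parts of size about $\Delta/(r-1)$. A direct computation gives
\[
t(n,r)-e\bigl(S^{(r)}_{n,\Delta}\bigr)=\frac{r}{2(r-1)}\Bigl(\frac nr-q\Bigr)^2+O(n),
\]
which is $\Theta(n^2)$ as soon as $q\le(\frac1r-c)n$. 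At the top of the range, where $q=\Theta(n^{s_F})$, it is about $n^2/(2r(r-1))$. So the hypothesis $e(G)\ge t(n,r)-o(n^2)$ of \cref{thm:ES-stability} is not available. Its conclusion, an $r$-partition with parts of size $n/r+o(n)$, is also false for the extremal graph itself, which is $\Theta(n^2)$-far from $T(n,r)$. For the same reason, your claim that $B\supseteq V_1$ up to small error is impossible when $|B|=q\ll n/r$, so the good/bad-vertex cleaning is organised around the wrong reference graph. The unbalancedness is exactly what this problem is about.

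The decomposition at the maximum-degree vertex has to come before any stability. In your notation, $\sum_{b\in B}d(b)\le q\Delta$ gives $e(A,B)+2e(G[B])\le q\Delta$; this is $M\ge 2I$ in \cref{lem:root-defect-main}. So edges inside $B$ are already paid for, and you never need to prove $e(G[B])=0$ separately for the bound. Colour-critical graphs have $a(F)=1$: in an $r$-colouring of $F-x_0y_0$ the ends of the critical edge share a colour, so $F-x_0$ is $r$-colourable. Hence $G[A]$ is $\partial_cF$-free (\cref{fact:boundary-exclusion-main}). Stability for $\partial_cF$ with $r-1$ classes inside $A$ then gives closeness to $S^{(r)}_{n,\Delta}$, not to $T(n,r)$, as in \Cref{thm:edge-spectral-stability-main}.

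The exact step has a second gap. $G[A]$ need not be $K_r$-free, only $\partial_cF$-free. Moreover, $\ex(\Delta,\partial_cF)$ can exceed $t(\Delta,r-1)$ by order $\Delta^{1+s_F}$, by inserting an $\mathcal M(\partial_cF)$-free graph inside one class (cf.\ \cref{rem:boundary-range-main}). So the step ``Tur\'an gives $e(G[A])\le t(\Delta,r-1)$'' is unjustified and false in general. This possible excess is the whole reason for $s_F$ and for the restriction $\Delta\le n-\Theta(n^{s_F})$.

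What has to be proved is that any excess of $e(G[A])$ over $t(\Delta,r-1)$ forces a larger deficiency $\sum_{b\in B}(\Delta-d_A(b))$. Roughly, if more than $|V(F)|$ vertices of $B$ were adjacent to both ends of an edge inside a class of $A$ and to most of $A$, colour-criticality would give a copy of $F$ with one colour class in $B$. One must then show this trade-off favours $S^{(r)}_{n,\Delta}$ once $q\ge Cn^{s_F}$. You name this as the main obstacle but supply no mechanism.

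The proposed Zykov symmetrization does not fill this gap, for two reasons. For non-clique $F$, a copy of $F$ through a vertex and its clone corresponds only to a homomorphic image of $F$ in the original graph, so cloning can create $F$. Cloning also raises the degrees of the clone's neighbours, possibly above $\Delta$, which takes you out of the class $\mathfrak G_{n,\Delta}(F)$ you are working in.
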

 
 A graph $H$ is called progressive-color-critical if $H$ is color-critical and there exists a vertex $v$ of $H$ such that $H-\{v\}$ is color-critical with $\chi(H-\{v\})=\chi(H)-1$. For instance, the complete graph $K_{r+1}$ is progressive-color-critical for $r\geq 3$.
\begin{theorem}[Huo and Yuan \cite{Huo-Yuan}]
	Let $H$ be a progressive-color-critical graph with $\chi(H)=r+1\geq 4$. For sufficiently large $n$, if $G$ is an $n$-vertex $H$-free graph with $\lceil (r-1)n/r\rceil \leq \Delta(G)\leq n-1$, then $e(G)\leq \Delta(G)(n-\Delta(G))+t(\Delta(G),r-1)$, with equality if and only if $G\cong (n-\Delta(G))K_1\vee T(\Delta(G),r-1)$.
\end{theorem}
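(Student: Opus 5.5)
Fix $\Delta=\Delta(G)$ and a vertex $v$ with $d(v)=\Delta$. Write $N=N(v)$ and $R=V(G)\setminus N$, so $|N|=\Delta$ and $|R|=n-\Delta\le n/r$. Here $F$ denotes the progressive-color-critical graph $H$, so $\chi(H)=r+1$. Choose a vertex $u$ of $H$ such that $H'=H-\{u\}$ is color-critical with $\chi(H')=r$. The plan has three steps.

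\textbf{Step 1: the neighbourhood is $H'$-free.} If $G[N]$ contained a copy of $H'$, then adding $v$ would embed $H'\vee K_1\supseteq H$. Hence $G[N]$ is $H'$-free.

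\textbf{Step 2: bound $e(G[N])$ and split the edge count.} For $r-1\ge3$ we apply the same theorem inductively to $G[N]$ with $r$ replaced by $r-1$. For $r-1=2$ we use the Simonovits color-critical result $\ex(m,H')=t(m,r-1)$ for large $m$. Either way,
\[
e(G[N])\le \ex(\Delta,H')=t(\Delta,r-1),
\]
since $\Delta\ge (r-1)n/r$ is large; Simonovits' theorem also gives uniqueness of the extremal graph. Then
\[
e(G)=e(G[N])+e(N,R)+e(G[R])\le t(\Delta,r-1)+\sum_{w\in R}d(w).
\]
Every $w\in R$ has $d(w)\le\Delta$. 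If $e(G[R])=0$, then $e(N,R)\le(n-\Delta)\Delta$, which is the target bound; equality forces $G[N]=T(\Delta,r-1)$ and $R$ to be an independent set completely joined to $N$, i.e.\ $G\cong S^{(r)}_{n,\Delta}$.

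\textbf{Step 3: the main obstacle, edges inside $R$.} In general $e(G[R])$ need not vanish, and $\sum_{w\in R}d(w)$ counts edges of $G[R]$ twice, so we must show that edges inside $R$ force a compensating loss elsewhere. The approach is a stability argument.

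First, suppose $e(G)\ge \Delta(n-\Delta)+t(\Delta,r-1)$. This quantity is at least roughly $t(n,r)-o(n^2)$, since the function is minimized near $\Delta=(r-1)n/r$, where it equals $t(n,r)$. So by Erd\H{o}s--Simonovits stability $G$ is $o(n^2)$-close to $T(n,r)$. Combined with $|N|=\Delta$, this shows $G[N]$ is close to $T(\Delta,r-1)$ and $N\cup R$ is close to an $r$-partition with $R$ as the small part.

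Second, delete the $o(n)$ vertices of low degree in the standard Simonovits cleaning, ensuring the deletion loses more edges than it could gain. Then every remaining vertex of $R$ is adjacent to almost all of each of the $r-1$ parts $V_1,\dots,V_{r-1}$ of $N$.

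Third, take an edge $ww'$ inside $R$. Together with common neighbourhoods in $V_1,\dots,V_{r-1}$, it yields a large $K_{r+1}$-blowup, actually $K_2\vee K_{r-1}(t,\dots,t)$, which contains the color-critical $H$. This is a contradiction, so $R$ is independent in the cleaned graph.

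Fourth, for the edges incident with the deleted exceptional vertices, a vertex with many neighbours in $R$ together with many in each $V_i$ again creates $H$. Hence each such vertex misses $\Omega(n)$ potential edges, and the counting closes.

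For the upper range of $\Delta$ near $n-1$, $R$ is tiny. There we instead use that vertices of $R$ adjacent to each other have bounded common neighbourhood structure in $N$, again by color-criticality. This gives $d(w)+d(w')\le 2\Delta-\Omega(n)$ for each edge $ww'$ in $R$, which absorbs the double count. Making this deficiency bound uniform in $\Delta$ is the delicate step I expect to require the most care.
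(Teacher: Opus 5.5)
\textbf{Step 3 is not needed, and as written it does not work; Steps 1--2 already prove the theorem.} Those steps are sound. Drop the induction in Step 2: $H-u$ need not be progressive-color-critical. Simonovits' theorem, $\ex(m,H')=t(m,r-1)$ with unique extremal graph $T(m,r-1)$, already covers every $r-1\ge 2$.

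The obstacle you describe in Step 3 comes from reading the double count in the wrong direction. Since $\sum_{w\in R}d(w)=e(N,R)+2e(G[R])$ and $d(w)\le\Delta$,
\[
e(N,R)+e(G[R])=\sum_{w\in R}d(w)-e(G[R])\le (n-\Delta)\Delta-e(G[R]),
\]
so counting the edges of $R$ twice works in your favour. This is exactly the defect inequality $M\ge 2I$ of \cref{lem:root-defect-main}.

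The paper quotes this theorem from Huo and Yuan without proof. Its proof of \cref{thm:mainresult-1}(i) is precisely this decomposition, and replacing Tur\'an's theorem for $K_r$ by Simonovits' theorem for $H-u$ gives the statement for every $\Delta$ in the range. The equality case also needs no assumption on $R$. Equality forces $e(G[R])=0$ and $d(w)=\Delta$ for every $w\in R$, hence $N(w)=N$. Uniqueness in Simonovits' theorem gives $G[N]\cong T(\Delta,r-1)$, so $G\cong S_{n,\Delta}^{(r)}$.

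As submitted, however, your case $e(G[R])>0$ rests on Step 3, and that step is broken. Its starting claim is false. The function $f(\Delta)=\Delta(n-\Delta)+t(\Delta,r-1)$ satisfies $f(\Delta+1)-f(\Delta)=n-1-\Delta-\lfloor\Delta/(r-1)\rfloor<0$ for $\lceil(r-1)n/r\rceil\le\Delta\le n-2$. So $t(n,r)$ is its maximum on the range, not its minimum, and $f(n-1)=t(n,r)-(1+o(1))\,n^2/(2r(r-1))$.

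Consequently $S_{n,\Delta}^{(r)}$ is itself $\Omega(n^2)$-far from $T(n,r)$, already in edge count, whenever $n-\Delta\le(1/r-c)n$ for a fixed $c>0$. Erd\H{o}s--Simonovits stability towards $T(n,r)$ therefore gives nothing over most of the range, and the cleaning and embedding sub-steps built on it have no foundation. The tiny-$R$ regime is left resting on a uniform deficiency bound you have not proved. Replace Step 3 by the one-line inequality above.
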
 
 
Recent work has pursued other extremal questions with maximum-degree constraints.  Liu \cite{Liu-nonregular-maximum-degree} studied connected nonregular graphs with prescribed maximum degree via the irregularity gap $\Delta-\rho(G)$, disproving a conjectured limiting constant for fixed
$\Delta\ge3$ and settling the cases $\Delta=3,4$ more precisely. Chakraborti and Chen \cite{Chakraborti-Chen} determined the maximum number
of $K_t$'s in graphs with given size and maximum degree at most $\Delta$. These results illustrate that maximum degree can change the extremal structure, not merely the lower-order terms.

Motivated by these results, we study the edge and spectral problem in the exact maximum-degree layer.  Thus, for a fixed graph $F$ and integers
$n,\Delta$, we study the largest edge number and adjacency spectral radius among all $n$-vertex $F$-free graphs $G$ satisfying $\Delta(G)=\Delta$.
For a fixed graph $F$ and integers $n,\Delta$, let $\mathfrak G_{n,\Delta}(F)$ be the family of all $n$-vertex $F$-free graphs $G$ with $\Delta(G)=\Delta$.  Define
\begin{equation*}
	\ex_F(n,\Delta):=
	\max\{e(G):G\in\mathfrak G_{n,\Delta}(F)\},
	\quad
	\spex_F(n,\Delta):=
	\max\{\rho(G):G\in\mathfrak G_{n,\Delta}(F)\}.
\end{equation*}

Assume that $\chi(F)=r+1\geq 3$ and
$\lceil (r-1)n/r\rceil\le\Delta\le n-1$.  We restrict attention to this
high-degree range; see \cite{Huo-Yuan} for lower degrees.  Put
\begin{equation}\label{def:root-graph}
	S_{n,\Delta}^{(r)}:=(n-\Delta)K_1\vee T(\Delta,r-1).
\end{equation}
This is the complete $r$-partite graph with one distinguished part of size $n-\Delta$ and the remaining $\Delta$ vertices balanced among the other
$r-1$ parts.  The lower bound on $\Delta$ ensures that the vertices in $T(\Delta,r-1)$ have degree at most $\Delta$; equivalently, $\Delta(S_{n,\Delta}^{(r)})=\Delta$.

\begin{problem}\label{prob:unbalanced-turan-spectral}
	Given a fixed graph $F$ and integers $n,\Delta$, determine	$\ex_F(n,\Delta)$ and $\spex_F(n,\Delta)$, and describe the extremal and near-extremal graphs in the exact layer $\Delta(G)=\Delta$.
\end{problem}

\begin{definition}\label{def:independent-deletion}
	For a graph $F$ with $\chi(F)=r+1\geq 3$, define
	\begin{equation*}
		a(F):=\min\{|I|: I\subseteq V(F)\text{ is independent and }\chi(F-I)\le r\}.
	\end{equation*}
\end{definition}

The parameter $a(F)$ separates two cases.  When $a(F)=1$, including the
color-critical case, near-extremal graphs are close to $S_{n,\Delta}^{(r)}$.
When $a(F)>1$, there are $F$-free graphs with maximum degree $\Delta$ that
differ from $T(n,r)$ by $O_F(n)$ edges.  The ordinary edge and spectral
stability theorems then give the first-order asymptotics and show that
extremal graphs are $o(n^2)$-close to $T(n,r)$.

We now state the main results.  The clique case is exact in every admissible
degree layer.
\begin{theorem}\label{thm:mainresult-1}
	Let $r\ge2$, and let $\lceil (r-1)n/r\rceil\le \Delta\le n-1$. Among all $n$-vertex $K_{r+1}$-free graphs $G$ with $\Delta(G)=\Delta$, the complete $r$-partite graph $S_{n,\Delta}^{(r)}$ is extremal in both the edge and spectral senses.
	\begin{itemize}[leftmargin=2em]
		\item[\textnormal{(i)}]
		One has $e(G)\le e(S_{n,\Delta}^{(r)})
		=(n-\Delta)\Delta+t(\Delta,r-1)$ with equality if and only if $G\cong S_{n,\Delta}^{(r)}$.
		\item[\textnormal{(ii)}]
		One has $\rho(G)\le \rho(S_{n,\Delta}^{(r)})$, with equality if and only if $G\cong S_{n,\Delta}^{(r)}$.
	\end{itemize}
\end{theorem}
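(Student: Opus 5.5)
Our plan is to fix a vertex $v$ of maximum degree and exploit the fact that in a $K_{r+1}$-free graph the neighbourhood $N(v)$, of size $\Delta$, spans a $K_r$-free graph. Write $A=N(v)$ and $B=V(G)\setminus A$, so $|B|=n-\Delta$ and $v\in B$.

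\emph{Edge part (i).} We count edges by splitting them according to their position relative to $A$ and $B$. Each vertex of $B$ has degree at most $\Delta$. Hence
\begin{equation*}
e(G)=e(A)+\sum_{u\in B}d(u)-e(B)\le e(A)+(n-\Delta)\Delta-e(B).
\end{equation*}
Since $A$ spans a $K_r$-free graph, Tur\'an's theorem gives $e(A)\le t(\Delta,r-1)$. Combining these yields $e(G)\le (n-\Delta)\Delta+t(\Delta,r-1)=e(S_{n,\Delta}^{(r)})$. For $r=2$ the same argument works with $t(\Delta,1)=0$, since $A$ is then independent.

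Equality forces three things: $e(B)=0$, every $u\in B$ has $d(u)=\Delta$, and $G[A]\cong T(\Delta,r-1)$. Because $B$ is independent and each $u\in B$ has degree $\Delta$, every vertex of $B$ is adjacent to all of $A$. Therefore $G\cong (n-\Delta)K_1\vee T(\Delta,r-1)$, which settles (i).

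\emph{Spectral part (ii).} We intend to run the same decomposition with the Perron vector. Since adding edges only increases $\rho$, we may pass to an extremal graph $G$, which we may take connected (or handle components separately). Let $x$ be its Perron vector, normalized with $\sum x_i^2=1$, and write $\rho=\rho(G)$. Then
\begin{equation*}
\rho=2\sum_{ij\in E}x_ix_j=2\sum_{ij\in E(A)}x_ix_j+2\sum_{ij\in E(A,B)}x_ix_j+2\sum_{ij\in E(B)}x_ix_j.
\end{equation*}
A cleaner route, which we would try first, is a symmetrization / Zykov-type argument. In a $K_{r+1}$-free graph with $\Delta(G)=\Delta$ and maximum $\rho$, one shows by Kelmans or Zykov operations that $G$ must be complete $r$-partite. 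The natural version of this argument is as follows. For nonadjacent vertices $u,w$ with $x_u\ge x_w$, replace $w$ by a clone of $u$. This preserves $K_{r+1}$-freeness and does not decrease $\rho$ by the Rayleigh quotient. The difficulty is that cloning can create a vertex of degree larger than $\Delta$, or can destroy the vertex of degree exactly $\Delta$. We would control this by cloning only within the structure forced by $v$: apply the symmetrization so that the resulting graph is complete $r$-partite with part sizes $n_1\le\dots\le n_r$. The degree condition then reads $n-n_1\le\Delta$, with equality needed for the exact layer; since $\rho$ decreases when parts are unbalanced, the maximum occurs at $n_1=n-\Delta$ with the remaining parts balanced.

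\emph{Expected main obstacle.} The hard step is making the symmetrization respect $\Delta(G)=\Delta$. As a fallback we would use a direct spectral bound. By the edge part, $G[A]$ is $K_r$-free, so Wilf/Nikiforov-type estimates on $\sum_{ij\in E(A)}x_ix_j$ are available in terms of $(\sum_{A}x_i)^2$. Together with the eigen-equations for the vertices of $B$, namely $\rho x_u=\sum_{w\sim u}x_w\le\sum_{w\in A}x_w+\sum_{w\in B,w\sim u}x_w$, this should lead to a quotient-matrix inequality showing that $\rho$ is at most the largest root of the characteristic equation of the equitable quotient of $S_{n,\Delta}^{(r)}$. Equality would then propagate back to the equality conditions of part (i).
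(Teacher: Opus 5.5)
Your part (i) is correct and is the paper's argument. Summing $d(u)\le\Delta$ over $B$ in $e(G)=e(A)+\sum_{u\in B}d(u)-e(B)$ is exactly the paper's defect inequality $M\ge 2I$, and your equality analysis is the same.

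\textbf{The gap: part (ii) is not proved.} The obstacle you flag is not a technicality. If you keep only $\Delta(G)\le\Delta$, the maximum of $\rho$ over $K_{r+1}$-free graphs is attained by $T(n,r)$, since $\Delta(T(n,r))=\lceil (r-1)n/r\rceil\le\Delta$. Moreover $\rho(T(n,r))>\rho(S_{n,\Delta}^{(r)})$ unless the two graphs coincide.

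So a Zykov-type process must preserve a vertex of degree exactly $\Delta$ while never raising any degree above $\Delta$. Cloning $u$ onto $w$ raises the degree of every vertex in $N(u)\setminus N(w)$ and drives the part sizes toward balance. ``Cloning only within the structure forced by $v$'' is never specified.

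The fallback is not carried out, and as sketched it cannot reach the sharp constant.
\begin{itemize}
\item The counting step $\sum_{u\in B}d(u)\le(n-\Delta)\Delta$ has no direct Perron-weighted analogue. The bound $d_B(u)\le|A\setminus N(u)|$ says nothing about $\sum_{w\in N_B(u)}x_w$ versus $\sum_{w\in A\setminus N(u)}x_w$ when the entries on $B$ are larger.
\item The best bound on $\sum_{ij\in E(A)}x_ix_j$ purely in terms of $(\sum_A x_i)^2$ is the Motzkin--Straus bound $\frac{r-2}{2(r-1)}(\sum_A x_i)^2$. This is strict at the Perron vector of $S_{n,\Delta}^{(r)}$ itself whenever $(r-1)\nmid\Delta$, because Tur\'an classes of unequal size carry unequal Perron mass. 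Any resulting universal bound would therefore exceed $\rho(S_{n,\Delta}^{(r)})$.
\end{itemize}

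\textbf{The missing idea: move the decomposition to clique counts.} The paper transfers the maximum-degree decomposition from edges to clique counts, so the degree constraint is only ever used combinatorially and never needs a weighted form.
\begin{itemize}
\item Zykov's theorem gives $k_s(G[A])\le k_s(T(\Delta,r-1))$.
\item Every other $s$-clique contains some $a\in B$ together with an $(s-1)$-clique of the $K_r$-free graph $G[N(a)]$. Since $d(a)\le\Delta$, Zykov gives $k_{s-1}(G[N(a)])\le k_{s-1}(T(\Delta,r-1))$.
\item Hence $k_s(G)\le k_s(T(\Delta,r-1))+(n-\Delta)k_{s-1}(T(\Delta,r-1))=k_s(S_{n,\Delta}^{(r)})$ for $2\le s\le r$.
\item Feed this into Nikiforov's inequality $\rho^r\le\sum_{s=2}^r(s-1)k_s(G)\rho^{r-s}$. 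For a complete $r$-partite graph, $k_s$ is the $s$th elementary symmetric function of the part sizes. So after replacing $k_s(G)$ by $k_s(S_{n,\Delta}^{(r)})$ the inequality reads $P_S(\rho(G))\le 0$, where $P_S(\lambda)=\lambda^r-\sum_{s=2}^r(s-1)k_s(S_{n,\Delta}^{(r)})\lambda^{r-s}$.
\item $P_S$ has $\rho(S_{n,\Delta}^{(r)})$ as its unique positive root and is positive beyond it, so $\rho(G)\le\rho(S_{n,\Delta}^{(r)})$.
\item For equality, keep the $s=2$ term exact: $0=P_S(\rho)\le-\bigl(e(S_{n,\Delta}^{(r)})-e(G)\bigr)\rho^{r-2}$. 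This forces $e(G)=e(S_{n,\Delta}^{(r)})$ and reduces to the equality case of (i).
\end{itemize}
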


\Cref{thm:mainresult-1} recovers the known clique edge result in a form adapted to the spectral proof.  The maximum-degree decomposition used in its proof controls all clique counts and hence feeds directly into Nikiforov's spectral polynomial.

\begin{theorem}\label{thm:mainresult-2}
	Let $F$ be fixed with $\chi(F)=r+1\ge3$ and $a(F)=1$.  For every $\varepsilon>0$, there are $\theta,\sigma>0$ and $n_0$ such that, for all $n\ge n_0$, all $\lceil(r-1)n/r\rceil\le\Delta\le n-1$, and every $F$-free graph $G$ with $\Delta(G)=\Delta$,
	\begin{itemize}
		\item[\textnormal{(i)}] $e(G)\ge e(S_{n,\Delta}^{(r)})-\theta n^2$ implies $d_{\mathrm{edit}}(G,S_{n,\Delta}^{(r)})\le\varepsilon n^2$.
		\item[\textnormal{(ii)}] $\rho(G)\ge\rho(S_{n,\Delta}^{(r)})-\sigma n$ implies $d_{\mathrm{edit}}(G,S_{n,\Delta}^{(r)})\le\varepsilon n^2$.
	\end{itemize}
\end{theorem}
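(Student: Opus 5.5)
Our plan is to reduce both parts to the $K_{r+1}$ case, \Cref{thm:mainresult-1}, via the removal lemma, and then exploit a vertex of maximum degree together with the hypothesis $a(F)=1$. For (i), let $G$ be $F$-free with $\Delta(G)=\Delta$ and $e(G)\ge e(S_{n,\Delta}^{(r)})-\theta n^2$. Since $F\subseteq K_{r+1}(t)$ for $t=|F|$ and $G$ is $F$-free, supersaturation shows that $G$ has $o(n^{r+1})$ copies of $K_{r+1}$. The graph removal lemma then gives a $K_{r+1}$-free subgraph $G'$ with $e(G\setminus G')\le \delta n^2$. Because $e(S_{n,\Delta}^{(r)})\ge t(n,r)-O(\Delta')n$, where $\Delta'$ measures the imbalance, Erd\H{o}s--Simonovits stability already puts $G$ within $o(n^2)$ of some complete $r$-partite graph $K(V_1,\dots,V_r)$. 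The task is to show that this partition must be unbalanced as in $S_{n,\Delta}^{(r)}$: one part of size about $n-\Delta$ and the others of size about $\Delta/(r-1)$.

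The key step uses a vertex $v$ with $d(v)=\Delta$. Since $a(F)=1$, choose an independent vertex $x\in V(F)$ with $\chi(F-x)\le r$. Then $F\subseteq K_1\vee K_r(t)$ minus the edges inside one part, namely the graph in which $x$ sees the $r-1$ other colour classes together with all of its neighbours. If $N(v)$ met every part $V_i$ in $\Omega(\varepsilon n)$ vertices, then, because $G$ is $o(n^2)$-close to complete $r$-partite on the typical vertices, a standard embedding (via cleaning to high minimum-degree-into-other-parts vertices and a K\H{o}v\'ari--S\'os--Tur\'an/counting argument) would yield $F$ with $x\mapsto v$. Hence $N(v)$ essentially avoids some part, say $V_1$, up to $\varepsilon n$ vertices. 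It follows that $\Delta=d(v)\le n-|V_1|+\varepsilon n$, that is, $|V_1|\lesssim n-\Delta$. Edge counting finishes the argument: $e(K(V_1,\dots,V_r))$ with $|V_1|\le n-\Delta+\varepsilon n$ is maximized, up to $O(\varepsilon n^2)$, when $|V_1|=n-\Delta$ and the remaining parts are balanced. Any partition whose sizes are $\varepsilon' n$ far from this profile loses $\Omega(\varepsilon'^2n^2)$ edges, by strict concavity, which contradicts the hypothesis $e(G)\ge e(S)-\theta n^2$ once $\theta$ is small. Closeness in part sizes together with edge closeness then gives $d_{\mathrm{edit}}(G,S_{n,\Delta}^{(r)})\le\varepsilon n^2$.

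For (ii), we will show that spectral near-extremality implies edge near-extremality and then apply (i). Since $\rho(S)\ge 2e(S)/n$ and $\rho(G)\ge \rho(S)-\sigma n$, we use a spectral counterpart of the previous argument. First, Nikiforov's spectral Erd\H{o}s--Stone--Bollob\'as theorem and the spectral supersaturation/removal step give a $K_{r+1}$-free $G'$ with $\rho(G')\ge\rho(G)-\delta n$, since deleting $\delta n^2$ edges costs at most $\sqrt{2\delta}\,n$ in $\rho$. Second, Nikiforov's spectral stability, via $\rho^2\le 2(1-1/r)e$ for $K_{r+1}$-free graphs, gives $e(G')\ge (1-1/r)^{-1}\rho(G')^2/2$. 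That $r$-partite structure is then determined, and the maximum-degree argument above applies verbatim. The main obstacle is this final comparison. For the unbalanced profile, $\rho(K(V_1,\dots,V_r))$ must be a strictly concave-type function of the part sizes with its maximum subject to $|V_1|\le n-\Delta+\varepsilon n$ at $S_{n,\Delta}^{(r)}$, and deviations of order $\varepsilon' n$ must cost $\Omega(\varepsilon'^2 n)$ in $\rho$. We will obtain this by writing $\rho$ of a complete multipartite graph as the root of $\sum_i |V_i|/(\rho+|V_i|)=1$ and perturbing it, using that $\rho$ is Lipschitz under $o(n^2)$ edits.
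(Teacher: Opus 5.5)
There is a genuine gap at the step where you apply Erd\H{o}s--Simonovits stability to the whole graph. \Cref{thm:ES-stability} requires $e(G)\ge t(n,r)-\delta n^2$ with $\delta=\delta(\varepsilon)$ small. However, $t(n,r)-e(S_{n,\Delta}^{(r)})\approx\frac{r}{2(r-1)}\bigl(\Delta-\frac{r-1}{r}n\bigr)^2$, which is of order $n^2$ over most of the admissible range; your own bound $O(\Delta')n$ is of order $n^2$ there too. At $\Delta=n-1$, for instance, $S_{n,\Delta}^{(r)}=K_1\vee T(n-1,r-1)$ lies about $\frac{n^2}{2r(r-1)}$ edges below $t(n,r)$. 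For $r=2$ with $n-\Delta=o(n)$ the hypothesis of (i) is even vacuous, since $e(S_{n,\Delta}^{(2)})=(n-\Delta)\Delta=o(n^2)$. So you never obtain the partition $V_1,\dots,V_r$ on which your embedding argument at a maximum-degree vertex is meant to act. As written, the argument only covers a window $\Delta=\frac{r-1}{r}n+O(\sqrt{\delta}\,n)$, where $S_{n,\Delta}^{(r)}$ is itself close to $T(n,r)$.

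The maximum-degree condition cannot be brought in after a Tur\'an-type structure has been found; it is what produces the structure. The paper localizes at a vertex $u$ with $d_G(u)=\Delta$, setting $B=N_G(u)$ and $A=V(G)\setminus B$. The assumption $a(F)=1$ makes $G[B]$ $\partial_cF$-free, a family of chromatic number $r$. So Erd\H{o}s--Stone--Simonovits and Erd\H{o}s--Simonovits stability are applied to $G[B]$ on $\Delta$ vertices against $T(\Delta,r-1)$, i.e.\ at the correct scale. The degree bound at each $a\in A$ gives $M\ge 2I$ (\cref{lem:root-defect-main}). Hence the exact identity $e(S_{n,\Delta}^{(r)})-e(G)=\bigl(t(\Delta,r-1)-e_G(B)\bigr)+(M-I)$ controls both the missing $A$--$B$ edges and the edges inside $A$.

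Part (ii) breaks for the same reason and one more. \Cref{lem:spectral-stability-1} needs $\rho(G)\ge(1-\frac1r-\delta)n$, but $\rho(S_{n,\Delta}^{(r)})$ is $\Theta(n)$ below this away from the balanced case. Moreover, $\rho^2\le 2(1-\frac1r)e$ is tight only for balanced complete $r$-partite graphs. For $r=3$, $n=2m+1$ and $\Delta=n-1$, the graph $S_{n,\Delta}^{(3)}=K_{1,m,m}$ has $\rho(K_{1,m,m})^2=m^2+O(m)$ but $\frac43e(K_{1,m,m})=\frac43m^2+O(m)$. So this route gives only $e(G')\gtrsim\frac34 e(S_{n,\Delta}^{(3)})$, a deficit of order $n^2$ that part (i) cannot absorb.

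The paper instead combines Nikiforov's clique inequality (\cref{lem:nik-main}) with $k_s(G)\le k_s(S_{n,\Delta}^{(r)})+o(n^s)$ for $3\le s\le r$ (\cref{lem:general-clique-dom-main}), proved via the same decomposition at $u$. The comparison polynomial $P_S$ vanishes exactly at $\rho(S_{n,\Delta}^{(r)})$, and the $s=2$ term is kept exact. Therefore the hypothesis $\rho(G)\ge\rho(S_{n,\Delta}^{(r)})-\sigma n$ yields $e(S_{n,\Delta}^{(r)})-e(G)\le\eta n^2$, and part (i) then finishes.
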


As an immediate consequence of the stability theorem, for $r\ge3$ we obtain the corresponding maximum-degree Erd\H{o}s--Stone--Simonovits asymptotics in every high-degree layer.

\begin{corollary}\label{thm:mainresult-3}
	Let $F$ be fixed with $\chi(F)=r+1\ge4$ and $a(F)=1$.  Uniformly for $\lceil(r-1)n/r\rceil\le\Delta\le n-1$,
	\begin{equation*}
		\ex_F(n,\Delta)=e(S_{n,\Delta}^{(r)})+o(n^2)=(1+o(1))e(S_{n,\Delta}^{(r)}),
	\end{equation*}
	and
	\begin{equation*}
		\spex_F(n,\Delta)=\rho(S_{n,\Delta}^{(r)})+o(n)=(1+o(1))\rho(S_{n,\Delta}^{(r)}).
	\end{equation*}
\end{corollary}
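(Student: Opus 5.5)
We need both a lower and an upper bound. The upper bounds follow from the stability theorem (Theorem~\ref{thm:mainresult-2}) together with a small counting argument. The lower bound needs a construction: an $F$-free graph with maximum degree exactly $\Delta$ that is close to $S_{n,\Delta}^{(r)}$. After both bounds, the equalities $e(S_{n,\Delta}^{(r)})+o(n^2)=(1+o(1))e(S_{n,\Delta}^{(r)})$ and $\rho(S_{n,\Delta}^{(r)})+o(n)=(1+o(1))\rho(S_{n,\Delta}^{(r)})$ follow because in the admissible range $e(S_{n,\Delta}^{(r)})=\Theta(n^2)$ and $\rho(S_{n,\Delta}^{(r)})=\Theta(n)$. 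For instance, $S_{n,\Delta}^{(r)}\supseteq T(\Delta,r-1)$ with $\Delta\ge n/2$ and $r-1\ge2$, so it has at least $cn^2$ edges and spectral radius at least $2e/n\ge cn$.

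\textbf{Upper bound.} Take an extremal $G$. For edges, suppose $e(G)\ge e(S_{n,\Delta}^{(r)})+\delta n^2$. Then Theorem~\ref{thm:mainresult-2}(i) gives $d_{\mathrm{edit}}(G,S_{n,\Delta}^{(r)})\le\varepsilon n^2$. That is not yet a contradiction, since $G$ could be $S_{n,\Delta}^{(r)}$ plus $\varepsilon n^2$ extra edges. So I would use a Tur\'an-type bound on the near-partite structure. After the small edit, $G$ has an $r$-partition $V_1,\dots,V_r$ with part sizes within $\sqrt{\varepsilon}n$ of those of $S_{n,\Delta}^{(r)}$, and only $\varepsilon n^2$ edges inside parts. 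The number of cross edges is at most the number of edges of the complete $r$-partite graph on these parts.

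The key point is that the maximum-degree constraint limits that complete $r$-partite count. Every vertex has degree at most $\Delta$, so $e(G)\le \Delta n/2$. More usefully, since $G$ is close to $S_{n,\Delta}^{(r)}$, most vertices of the $r-1$ big parts have degree close to $\Delta$. The main obstacle is showing that the part-size perturbation and the $\varepsilon n^2$ internal edges give only an $O(\sqrt\varepsilon)n^2$ gain. This is continuity: the function $\sum_{i<j}|V_i||V_j|$ is Lipschitz in the part sizes with constant $n$. Hence $e(G)\le e(S_{n,\Delta}^{(r)})+O(\sqrt{\varepsilon})n^2$, and letting $\varepsilon\to0$ gives $\ex_F(n,\Delta)\le e(S_{n,\Delta}^{(r)})+o(n^2)$ uniformly in $\Delta$.

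For the spectral radius, use Theorem~\ref{thm:mainresult-2}(ii) the same way. From $\rho\ge\rho(S_{n,\Delta}^{(r)})-\sigma n$ we get edit distance at most $\varepsilon n^2$. Eigenvalue perturbation then gives $|\rho(G)-\rho(S_{n,\Delta}^{(r)})|\le\|A(G)-A(S)\|_F\le\sqrt{2\varepsilon}\,n$. If instead $\rho(G)<\rho(S_{n,\Delta}^{(r)})-\sigma n$, the upper bound holds trivially.

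\textbf{Lower bound.} Since $a(F)=1$ and $\chi(F)=r+1$, the fact that $F$ is not $r$-colourable shows that every $r$-partite graph is $F$-free. So $S_{n,\Delta}^{(r)}$ itself lies in $\mathfrak G_{n,\Delta}(F)$, because $\Delta(S_{n,\Delta}^{(r)})=\Delta$ by the lower bound on $\Delta$. Hence $\ex_F(n,\Delta)\ge e(S_{n,\Delta}^{(r)})$ and $\spex_F(n,\Delta)\ge\rho(S_{n,\Delta}^{(r)})$, which completes the proof. The hypothesis $r\ge3$ is used only to guarantee that $e(S_{n,\Delta}^{(r)})$ is quadratic across the whole range, including $\Delta=n-1$. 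For $r=2$, $\Delta=n-1$ gives a star, whose edge count is linear, so the multiplicative form would fail there.
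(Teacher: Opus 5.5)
Your proof is correct and follows the paper's route: $S_{n,\Delta}^{(r)}$ gives the lower bounds, and the upper bounds come from applying \Cref{thm:mainresult-2} to an extremal graph (which satisfies $e(G)\ge e(S_{n,\Delta}^{(r)})$, resp.\ $\rho(G)\ge\rho(S_{n,\Delta}^{(r)})$) and then comparing via edit distance and the Weyl/Frobenius bound. Your partition-and-Lipschitz detour in the edge bound is unnecessary, and the ``main obstacle'' you describe does not exist: the bijection in $d_{\mathrm{edit}}$ preserves part sizes exactly, and $|e(G)-e(S_{n,\Delta}^{(r)})|\le d_{\mathrm{edit}}(G,S_{n,\Delta}^{(r)})$ directly gives $e(G)\le e(S_{n,\Delta}^{(r)})+\varepsilon n^2$, with the maximum-degree constraint playing no role at that step.
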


%Motivated by the edge-to-spectral principle, we establish a quantitative spectral transfer result. Previous works show that strong edge-extremal information can determine spectral extremal structures \cite{Byrne-Desai-Tait}, and that decomposition families yield exact spectral-extremal results near the Tur\'an threshold \cite{Fang-Tait-Zhai}. 
Erd\H{o}s's decomposition theorem (see \cref{thm:erdos_decomposition}) is used by Huo and Yuan \cite{Huo-Yuan} to control the error term in the prescribed maximum-degree problem.  We prove the following spectral version.  For every finite forbidden family, a decomposition-family bound of order $O(n^{1+s})$ gives an adjacency spectral-radius bound with error term $O(n^s)$.
\begin{theorem}\label{thm:spec-erdos-transfer}
	Let $\mathcal{L}$ be a finite family of graphs with $\min_{L\in\mathcal{L}}\chi(L)=r+1\ge3$, and let $s_{\mathcal{L}}$ be as in \Cref{def:family-boundary-exponent}.  There is a constant $C_{\partial}=C_{\partial}(\mathcal{L})$ such that every $\mathcal{L}$-free graph $H$ on $n$ vertices satisfies
	\begin{equation}\label{eq:spec-erdos-cap}
		\rho(H)\le \rho(T(n,r))+C_{\partial}n^{s_{\mathcal{L}}}.
	\end{equation}
\end{theorem}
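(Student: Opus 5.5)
The plan is to reduce to a spectral-extremal graph and then run the standard Perron-vector stability argument, using the decomposition family to control edges inside the parts. Since the right-hand side of \eqref{eq:spec-erdos-cap} depends only on $n$, it suffices to bound $\rho(G)$ for an $\mathcal{L}$-free graph $G$ on $n$ vertices of maximum spectral radius. Throughout, $\mathcal{M}=\mathcal{M}(\mathcal{L})$ is the decomposition family, and I assume that \Cref{def:family-boundary-exponent} gives $\ex(n,\mathcal{M})=O(n^{1+s_{\mathcal{L}}})$. We may assume $\rho(G)\ge\rho(T(n,r))$, since otherwise there is nothing to prove.

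Let $x$ be the Perron vector of $G$, normalized so that $\max_v x_v=1$.

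\textbf{Step 1 (approximate structure).} By Nikiforov's spectral Erd\H{o}s--Stone--Bollob\'as theorem and the spectral stability theorem, $G$ is $\varepsilon n^2$-close to $T(n,r)$. Fix a partition $V_1,\dots,V_r$ maximizing the number of crossing edges, so that $\sum_i e(G[V_i])\le\varepsilon n^2$ and $|V_i|=n/r+O(\sqrt{\varepsilon}n)$.

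\textbf{Step 2 (exceptional vertices).} Let $W$ be the set of vertices having at least $\delta n$ neighbours in their own part, or at least $\delta n$ non-neighbours in some other part. Counting gives $|W|=O(\varepsilon n/\delta)$.

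Comparing $\rho x_v=\sum_{u\sim v}x_u$ with $\rho\ge(1-1/r)n-O(1)$ gives $x_v\ge 1-O(\delta)$ for all $v\notin W$. The usual extremality argument then shows $W=\emptyset$: reattach a vertex of $W$ as a clone of a typical vertex, which keeps the graph $\mathcal{L}$-free and strictly increases $\rho$. Alternatively, one can show directly that each $v\in W$ has degree at most $(1-1/r-\eta)n$, and hence $x_v\le 1-\eta'$. I would first try to prove $W=\emptyset$. I expect this step, together with keeping the entries of $x$ uniformly close to $1$, to be the main obstacle.

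\textbf{Step 3 (inside edges via the decomposition family).} With $W=\emptyset$, every $r$ vertices from distinct parts have $\Omega(n)$ common neighbours in each remaining part. By the definition of $\mathcal{M}$, if some $G[V_i]$ contained a member of $\mathcal{M}$, we could embed it together with a large complete $(r-1)$-partite graph in the other parts. That would create a copy of some $L\in\mathcal{L}$, which is impossible. Hence each $G[V_i]$ is $\mathcal{M}$-free, and
\[
e(D)=O(n^{1+s_{\mathcal{L}}}),\qquad\text{where } D=\bigcup_i G[V_i].
\]

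\textbf{Step 4 (spectral bound).} Write $A(G)=A(K)+A(D)$, where $K$ is the $r$-partite subgraph of crossing edges. Then $K$ is a subgraph of a complete $r$-partite graph on $n$ vertices, so $\rho(K)\le\rho(T(n,r))$. Using the Rayleigh quotient,
\[
\rho(G)=\frac{x^{T}A(K)x+x^{T}A(D)x}{x^Tx}\le\rho(T(n,r))+\frac{2e(D)\max_v x_v^2}{x^Tx}.
\]
Since $x_v\ge 1-O(\delta)$ for all $v$, we have $x^Tx\ge n/2$, so the last term is at most $4e(D)/n=O(n^{s_{\mathcal{L}}})$.

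The point of this weighting is that it avoids the crude bound $\rho(D)\le\sqrt{2e(D)}$, which is too weak, for example when $D$ contains a large star. The near-uniformity of $x$ from Step 2 is exactly what makes this step work, which is why Step 2 is the crux.
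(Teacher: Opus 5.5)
\emph{Your Step 2 is not merely the hard step. The claim $W=\emptyset$ is false for general finite families, and Steps 3--4 rely on it.}

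Take $\mathcal{L}=\{2K_{r+1}\}$, so that $\mathcal{M}(\mathcal{L})=\{2K_2\}$. Every $K_{r+1}$ in $K_1\vee T(n-1,r)$ uses the apex, so this graph is $\mathcal{L}$-free. A Rayleigh quotient with weight $\frac{r}{r-1}$ on the apex and $1$ elsewhere gives
$\rho(K_1\vee T(n-1,r))-\rho(T(n,r))\ge\frac{2r-1}{r(r-1)}-O(1/n)$.

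Now suppose $W=\emptyset$. Your Step 3 forces each $G[V_i]$ to be $2K_2$-free, i.e.\ a star with fewer than $\delta n$ edges or a triangle. Your own Step 4 then gives $\rho(G)\le\rho(T(n,r))+4r\delta+O(1/n)$. So for small $\delta$ the spectral-extremal graph must have $W\neq\emptyset$.

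In fact, $K_1\vee T(n-1,r)$ is known to be the spectral-extremal graph for $2K_{r+1}$ when $n$ is large. This breaks three parts of your plan. First, its apex has about $n/r$ neighbours in its own part and degree $n-1$, so your alternative bound $d(v)\le(1-1/r-\eta)n$ on $W$ fails as well. Second, with the maximum entry at the apex, the typical entries are about $(r-1)/r$, not $1-O(\delta)$. Third, extremality cannot be used to remove such vertices.

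This is exactly the $a(L)>1$ phenomenon discussed in the paper: a vertex joined to a near-complete $r$-partite graph need not create a copy of $L$. Separately, making $v$ a clone of a typical vertex does not preserve $\mathcal{L}$-freeness in general, since a copy of $L$ may use both twins.

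Without $W=\emptyset$, Step 3 only shows that $G[V_i\setminus W]$ is $\mathcal{M}$-free, because the embedding needs large common neighbourhoods in the other parts. The edges inside parts at vertices of $W$ are bounded in your argument only by $|W|\,n=O(\varepsilon n^2/\delta)$, far above $n^{1+s_{\mathcal L}}$. So the estimate $e(D)=O(n^{1+s_{\mathcal L}})$ that feeds Step 4 is unsupported.

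Closing this gap would need a substantial additional argument. For example, you would have to show that only $O_{\mathcal L}(1)$ vertices have linearly many neighbours in their own part, and that every other vertex is typical enough to be used in the embedding of Step 3, with mutually compatible thresholds. Your proposal contains nothing of this kind.

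The paper avoids structure altogether, so apex-type vertices cause no difficulty there. For a unit Perron vector $z$ of an arbitrary $\mathcal{L}$-free $H$, it samples $U$ with $\Pr(v\in U)=z_v/z_{\max}$. It applies $\ex(m,\mathcal{L})\le t(m,r)+\Lambda_{\mathcal L}m^{1+s_{\mathcal L}}$ to $H[U]$ and uses $\mathbb{E}\,e(H[U])=\rho(H)/(2z_{\max}^2)$ together with a moment bound for Bernoulli sums. It finishes with $z_{\max}\le 2rY/((r-1)n)$ and $Y^2\le n$, where $Y=\sum_v z_v$. The edge bound obtained from Erd\H{o}s's decomposition theorem already absorbs the extra edges at apex-type vertices.
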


\paragraph{Organization.}
\Cref{sec:external-tools} collects the boundary parameters, standard extremal inputs, clique-count tools, and the rooted maximum-degree defect inequality.  \Cref{sec:exact-clique} proves the exact clique edge and spectral theorems.  \Cref{sec:stability-theory} treats the cases $a(F)>1$ and $a(F)=1$.  \Cref{sec:spec-erdos} proves the spectral Erd\H{o}s transfer theorem.

\paragraph{Notation and conventions.}
We use standard notation throughout.  We write $e(G)$ and $\rho(G)$ for the number of edges and the adjacency spectral radius of $G$.  The neighborhood
and degree of a vertex $v$ are $N_G(v)$ and $d_G(v)$, and the maximum degree is $\Delta(G)$.  A Perron vector of $G$ is a nonnegative unit eigenvector
associated with $\rho(G)$; it is positive when $G$ is connected.  The Rayleigh quotient gives
\begin{equation*}
\rho(G)=\max_{\|\boldsymbol{x}\|_2=1}\boldsymbol{x}^T A(G)\boldsymbol{x}=2\max_{\|\boldsymbol{x}\|_2=1}\sum_{uv\in E(G)}x_ux_v.
\end{equation*}
For $X\subseteq V(G)$, write $G[X]$ for the induced subgraph and $e_G(X):=e(G[X])$; for disjoint $X,Y\subseteq V(G)$, write $e_G(X,Y)$ for the number of edges between them.  The balanced complete $r$-partite graph on $n$ vertices is $T(n,r)$, and $t(n,r):=e(T(n,r))$.  Recall that $G\vee H$ denotes the join of two vertex-disjoint graphs.  The graph $S_{n,\Delta}^{(r)}$ is defined in \eqref{def:root-graph}.  For two $n$-vertex graphs $G$ and $H$, define
\begin{equation*}
	d_{\mathrm{edit}}(G,H):=\min_{\phi:V(H)\to V(G)}
	\left|E(G)\mathbin{\triangle}\phi(E(H))\right|,
\end{equation*}
where the minimum is over all bijections $\phi:V(H)\to V(G)$ and $\phi(E(H)):=\{\phi(x)\phi(y):xy\in E(H)\}$.  For a family $\mathcal H$, set $d_{\mathrm{edit}}(G,\mathcal H):=\min_{H\in\mathcal H}d_{\mathrm{edit}}(G,H)$.

\section{Preliminaries}\label{sec:external-tools}

We collect the boundary parameters and external extremal tools used in the edge
and spectral theory.  All constants are understood to depend only on the fixed
forbidden graph and on the indicated error parameters.

\subsection{Color boundary and decomposition families}

We first fix the boundary notions that determine the form of the stable graphs
and the range in which $S_{n,\Delta}^{(r)}$ is stable.

\begin{definition}\label{def:balanced-core-stability-model}
	For $t\ge1$, let $\mathcal J^t_{n,\Delta,r}$ denote the family of graphs $J$
	for which there is an independent set $X\subseteq V(J)$ with $|X|\le t$
	such that $J-X=T(n-|X|,r)$ and the edges from $X$ to
	$V(J)\setminus X$ are arbitrary subject to $\Delta(J)\le\Delta$.  Put
	\begin{equation*}
		\mathcal J^t_{n,\Delta,r,=}:=\{J\in\mathcal J^t_{n,\Delta,r}:\Delta(J)=\Delta\}.
	\end{equation*}
\end{definition}

The independent deletion number $a(F)$ and the family
$\mathcal{J}^t_{n,\Delta,r,=}$ were defined in \cref{def:independent-deletion}
and \cref{def:balanced-core-stability-model}.  The following lemma provides
$F$-free graphs in this family.

\begin{lemma}\label{lem:J-free-main}
	Let $F$ be a graph with $\chi(F)=r+1$ and $a(F)>1$.  Put $t=a(F)-1$.  Then every $J\in\mathcal{J}^t_{n,\Delta,r}$ is $F$-free.
\end{lemma}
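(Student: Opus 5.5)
We argue by contradiction. Fix $J\in\mathcal{J}^t_{n,\Delta,r}$ with $t=a(F)-1$, and let $X\subseteq V(J)$ be the independent set from \cref{def:balanced-core-stability-model}. Thus $|X|\le t$ and $J-X=T(n-|X|,r)$. Suppose $\varphi:V(F)\to V(J)$ is an injective homomorphism realizing a copy of $F$ in $J$.

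Put $I:=\varphi^{-1}(X)\subseteq V(F)$. We will show that $I$ is independent in $F$ and that $\chi(F-I)\le r$. Since $|I|\le|X|\le t<a(F)$, this contradicts the minimality in \cref{def:independent-deletion}.

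\emph{Independence of $I$.} Suppose two vertices of $I$ were adjacent in $F$. Their images would then be adjacent vertices of $J$ lying in $X$. This is impossible, because $X$ is independent in $J$.

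\emph{Colourability of $F-I$.} The map $\varphi$ sends $V(F)\setminus I$ into $V(J)\setminus X$, and it preserves adjacency. Hence $F-I$ is isomorphic to a subgraph of $J-X=T(n-|X|,r)$. The graph $T(n-|X|,r)$ is $r$-partite, so the parts of $T(n-|X|,r)$, pulled back via $\varphi$, give a proper $r$-colouring of $F-I$. Therefore $\chi(F-I)\le r$.

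We still need $I$ to be an admissible set in the definition of $a(F)$. The case $I=\emptyset$ is also covered: it would give $\chi(F)\le r$, contradicting $\chi(F)=r+1$. So in all cases $I$ is an independent set with $\chi(F-I)\le r$, and $|I|\le t<a(F)$, which is the contradiction. Hence $J$ is $F$-free.

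The degree condition $\Delta(J)\le\Delta$ plays no role in this argument. The only point needing care is that $\varphi$ may map fewer than $|X|$ vertices into $X$, but this only makes $|I|$ smaller and so does not affect the contradiction. There is no genuine obstacle beyond this bookkeeping.
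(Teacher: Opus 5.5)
Your proof is correct and is essentially the paper's argument: you pull the exceptional set $X$ back to $I=\varphi^{-1}(X)$, which is independent because $X$ is, and $F-I$ embeds in the $r$-partite graph $J-X$. Since $|I|\le|X|\le a(F)-1$, this contradicts the minimality in the definition of $a(F)$. Your remarks that the case $I=\emptyset$ is harmless and that the degree condition is never used are accurate.
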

\begin{proof}
	Suppose, to the contrary, that a copy of $F$ is contained in $J$.  Let $X$ be the exceptional set in the definition of $J$, and put $I=V(F)\cap X$ inside this copy.  Since $X$ is independent, $I$ is an independent set of $F$.  Since $J-X$ is $r$-partite, the graph $F-I$ is $r$-colorable.  Also $|I|\le |X|\le a(F)-1$.  This contradicts the minimality in the definition of $a(F)$.
\end{proof}

We next recall the decomposition family and the boundary graph family.

\begin{definition}\label{def:decomposition}
	Given a family of graphs $\mathcal{F}$ with $\min_{F\in\mathcal F} \chi(F)=r+1\geq 2$ and $\mathcal{F}_r:=\{F\in\mathcal{F}: \chi(F)=r+1\}$, the decomposition family $\mathcal M(\mathcal F)$ of $\mathcal{F}$ consists of all bipartite graphs obtained from some $F\in\mathcal F_r$ by deleting $r-1$ color classes in some $(r+1)$-coloring of $V(F)$.
\end{definition}

%We use the not necessarily minimal bipartite-remainder convention for
%$\mathcal M(\mathcal F)$.  Whenever a cited decomposition theorem is stated
%for the corresponding minimal Simonovits decomposition family, we apply it to
%a minimal subfamily of $\mathcal M(\mathcal F)$; enlarging the forbidden
%bipartite family only weakens the resulting extremal bound.

\begin{definition}\label{def:color-boundary-main}
	For a graph $F$ with $\chi(F)=r+1$, define
	\begin{equation*}
		\partial_cF:=
		\{F-I:I\subseteq V(F)\text{ is independent},\ |I|=a(F),\ \chi(F-I)=r\}.
	\end{equation*}
	If $a(F)=1$, then
	\begin{equation*}
		\partial_cF=\{F-v:v\in V(F),\ \chi(F-v)=r\}.
	\end{equation*}
\end{definition}

\subsection{Extremal and stability theory}

We use two standard results from extremal graph theory. The first is the usual stability theorem for graphs close to the Tur\'an graph $T(n,r)$, while the second provides the extremal bound needed when $a(F)=1$.
\begin{theorem}[Erd\H{o}s--Simonovits stability theorem
	\cite{Erdos,Simonovits-1}]\label{thm:ES-stability}
	Let $\mathcal F$ be a fixed finite family of graphs with $\min_{F\in\mathcal F}\chi(F)=r+1\ge3$.  For every
	$\varepsilon>0$, there exist constants $\delta=\delta(\varepsilon,\mathcal F)>0$
	and $n_0=n_0(\varepsilon,\mathcal F)$ such that every $\mathcal F$-free graph $G$ on
	$n\ge n_0$ vertices with $e(G)\ge e(T(n,r))-\delta n^2$	differs from $T(n,r)$ in at most $\varepsilon n^2$ edges.
\end{theorem}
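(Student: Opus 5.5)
This is the classical Erd\H{o}s--Simonovits stability theorem, which the paper only cites. If I had to supply a proof, I would reduce to the clique case and then use an exact stability statement for $K_{r+1}$-free graphs.

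\textbf{Step 1: from $\mathcal F$-free to few cliques.} Fix $F\in\mathcal F$ with $\chi(F)=r+1$ and put $t=|V(F)|$. Then $F$ is a subgraph of the blow-up $K_{r+1}(t)$, so every $\mathcal F$-free graph $G$ is $K_{r+1}(t)$-free. By the Erd\H{o}s hypergraph Kővári--Sós--Turán theorem, applied to the $(r+1)$-uniform hypergraph of copies of $K_{r+1}$ in $G$, a graph with at least $c\,n^{r+1}$ copies of $K_{r+1}$ contains $K_{r+1}(t)$ once $n\ge n_0(c)$. Hence $G$ has $o(n^{r+1})$ copies of $K_{r+1}$.

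\textbf{Step 2: clean up with the removal lemma.} By the graph removal lemma (from Szemer\'edi regularity), for any $\gamma>0$ and $n$ large we can delete at most $\gamma n^2$ edges of $G$ to obtain a $K_{r+1}$-free graph $G'$. This graph satisfies
\[
e(G')\ge t(n,r)-(\delta+\gamma)n^2 .
\]

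\textbf{Step 3: Füredi's stability for $K_{r+1}$-free graphs.} I would prove the following: if $G'$ is $K_{r+1}$-free and $e(G')\ge t(n,r)-m$, then $G'$ can be made $r$-partite by deleting at most $m$ edges. To see this, let $v_1$ be a vertex of maximum degree in $G'$. Inductively, let $v_{i+1}$ be a vertex of maximum degree in $G'$ among the common neighbourhood $N_i=N(v_1)\cap\dots\cap N(v_i)$. This process stops after some $k\le r$ steps because $G'$ is $K_{r+1}$-free. Set $V_i=N_{i-1}\setminus N_i$, using the convention $N_0=V$, and set $N_k=\emptyset$. The sum $\sum_i\sum_{v\in V_i}d(v)$ is bounded by $\sum_i |V_i|\,(n-|V_i|)$, since each $v\in V_i$ has degree at most $d(v_i)\le n-|V_i|$ by the maximality of the choices. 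Writing $2e(G')=\sum_v d(v)$ and comparing the two sums, one finds that the number of edges inside the parts $V_i$ is at most
\[
\sum_{i<j}|V_i||V_j|-e(G')\le t(n,r)-e(G')\le m .
\]
Deleting these edges leaves an $r$-partite graph. This is the step I expect to be the main obstacle; it is mostly careful bookkeeping of the double counting.

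\textbf{Step 4: conclude.} After Step 3 we have an $r$-partite graph with at least $t(n,r)-2(\delta+\gamma)n^2$ edges. Its number of edges is at most $t(n,r)-\tfrac12\sum_i(|V_i|-n/r)^2+O(n)$. It follows that every part has size $n/r\pm O(\sqrt{\delta+\gamma})\,n$, so it differs from $T(n,r)$ in $O(\sqrt{\delta+\gamma})\,n^2$ edges. Summing the edits from Steps 2--4 gives total edit distance $O(\sqrt{\delta+\gamma})\,n^2$. Choosing $\gamma$ and $\delta$ small in terms of $\varepsilon$ makes this at most $\varepsilon n^2$.
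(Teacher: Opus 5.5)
The paper does not prove this theorem; it quotes it from Erd\H{o}s and Simonovits. Your outline is F\"uredi's well-known route: pass to a $K_{r+1}$-free graph by supersaturation and the removal lemma, then use a greedy clique partition. Steps 1, 2 and 4 are fine. Step 3, however, has a genuine gap as you have set it up.

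You choose $v_{i+1}$ by its degree in $G'$ and then compare total-degree sums. Since the $V_i$ partition $V$, you have $\sum_i\sum_{v\in V_i}d(v)=2e(G')$ and $\sum_i|V_i|(n-|V_i|)=2\sum_{i<j}|V_i||V_j|$. So the comparison yields only $e(G')\le\sum_{i<j}|V_i||V_j|$. This is Tur\'an's bound and says nothing about edges inside the parts.

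In fact the inequality you claim is false for your choice rule. Take $r=3$ and a vertex $v_1$ adjacent to $x,y,w$, with $xy$ an edge. Let $Z$ be $K_{3,3}$ on $\{a_1,a_2,a_3\}\cup\{b_1,b_2,b_3\}$ minus the edge $a_1b_1$, and join $w$ to $a_1$ and $b_1$. This graph is $K_4$-free, since its only triangle is $v_1xy$. It has $14$ edges and maximum degree $3=d(v_1)$, so $v_1$ is an admissible first choice. Your rule then picks $v_2=w$, because $d(w)=3>2=d(x)=d(y)$. This gives $N_2=\emptyset$, so the parts are $V_1=\{v_1\}\cup Z$ and $V_2=\{x,y,w\}$. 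These parts contain $9$ edges, whereas $|V_1||V_2|-e(G')=7$.

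The missing idea is to measure degrees inside the current common neighbourhood. Let $v_i$ be a vertex of maximum degree in $G'[N_{i-1}]$. Then every $v\in V_i$ satisfies $d_{G'[N_{i-1}]}(v)\le d_{G'[N_{i-1}]}(v_i)=|N_i|$. Summing over $V_i$ gives $2e_{G'}(V_i)+e_{G'}(V_i,N_i)\le|V_i||N_i|$. Summing over $i$ counts each edge inside a part twice and each crossing edge once. Hence $\sum_i e_{G'}(V_i)+e(G')\le\sum_{i<j}|V_i||V_j|\le t(n,r)$, using that there are at most $r$ parts. This is exactly the bound you need, and with this change the rest of your argument goes through.
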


\begin{theorem}[Erd\H{o}s decomposition theorem \cite{Erdos}]\label{thm:erdos_decomposition}
	Let $\mathcal F$ be a family of graphs with $\min_{F\in\mathcal F} \chi(F)=r+1\geq 2$.  Then there exists $c=c(\mathcal F)>0$ such that
	\begin{equation*}
	\ex(n,\mathcal F)\le t(n,r)+(1+o(1))r\cdot\ex\!\left(\frac{n}{r},\mathcal M(\mathcal F)\right)+cn.
	\end{equation*}
\end{theorem}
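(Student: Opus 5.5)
Since \Cref{thm:erdos_decomposition} is a cited classical result, I would follow Erd\H{o}s's original scheme. Fix $n$ large and an $\mathcal F$-free graph $G$ on $n$ vertices with $e(G)=\ex(n,\mathcal F)$. Since $e(G)\ge t(n,r)$, \Cref{thm:ES-stability} applies and shows that $G$ is $o(n^2)$-close to $T(n,r)$. The first step is a degree cleaning. Fix a small $\gamma>0$ and repeatedly delete vertices whose degree in the current graph is below $(1-1/r-\gamma)$ times its current order. A routine computation shows that each deletion raises $e(\cdot)-t(\cdot,r)$ by a linear amount. Since this surplus is at most $o(n^2)$ by Erd\H{o}s--Stone, at most $o(n)$ vertices are deleted, and their edges are paid for by the gain in surplus. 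Since the bound is monotone in the $\ex$ terms, it then suffices to prove the inequality for graphs with minimum degree at least $(1-1/r-\gamma)n$, up to a $(1+o(1))$ factor.

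Next, I take a partition $V_1,\dots,V_r$ maximizing $\sum_{i<j}e(V_i,V_j)$. Stability gives $|V_i|=n/r+o(n)$ and $\sum_i e(V_i)=o(n^2)$. Call a vertex \emph{bad} if it has at least $\beta n$ neighbours in its own part, where $\beta\ll 1$ is fixed. The key claim is that there are at most a constant number $K=K(\mathcal F)$ of bad vertices. To prove it, suppose there are more than $K$. By maximality of the cut, a bad vertex also has $\ge\beta n$ neighbours in every other part. The minimum-degree condition then allows a Kővári--Sós--Turán type counting (the standard ``common neighbourhood'' argument) to find a large complete $r$-partite graph $K_r(t,\dots,t)$ among good vertices. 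This copy can be extended by a bounded number of bad vertices adjacent to many of its vertices in every part. This yields a copy of $K_{r+1}(t,\dots,t)\supseteq F$ for some $F\in\mathcal F_r$, a contradiction. Deleting the bad vertices costs at most $Kn=O(n)$ edges, which gives the $cn$ term.

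Among the good vertices, I claim each $G[V_i\setminus B]$ is $\mathcal M(\mathcal F)$-free. Suppose $G[V_i]$ contains some $M\in\mathcal M(\mathcal F)$ obtained from $F$ by deleting $r-1$ colour classes. Every vertex of $M$ is good, so it misses only $O(\gamma+\beta)n$ vertices in each other part. The $|V(M)|$ vertices of $M$ therefore have a common neighbourhood of linear size in each $V_j$, $j\ne i$. By Erd\H{o}s--Stone, this common neighbourhood contains a complete $(r-1)$-partite graph with parts of size $|V(F)|$, across the $r-1$ other parts. Together with $M$ this embeds $F$, a contradiction. Hence
\[
e(G)\le \sum_{i<j}|V_i||V_j|+\sum_i \ex(|V_i|,\mathcal M(\mathcal F))+Kn\le t(n,r)+\sum_i\ex(|V_i|,\mathcal M(\mathcal F))+cn.
\]
Finally, since $|V_i|=n/r+o(n)$, the averaging inequality $\ex(N,\mathcal M)\le \ex(m,\mathcal M)\binom N2/\binom m2$ for $N\ge m$ gives $\ex(|V_i|,\mathcal M)\le(1+o(1))\ex(n/r,\mathcal M)$. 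Summing over the $r$ parts gives the stated bound.

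I expect the main obstacle to be the bounded-number-of-bad-vertices step, since it is what turns an $o(n^2)$ error into $cn$. If the direct embedding is awkward, my first fallback is the following comparison argument. Replace a bad vertex's neighbourhood by a copy of a good vertex's neighbourhood in the appropriate part. This keeps the graph $\mathcal F$-free and does not decrease the edge count, contradicting extremality unless the bad vertices are few.
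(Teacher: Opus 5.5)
The paper gives no proof of this statement. It is quoted from Erd\H{o}s and used only as a black box, for finite families, to supply the edge cap behind the spectral transfer theorem, so there is no internal proof to compare against. Your outline is the classical Erd\H{o}s--Simonovits argument, and it does prove the stated form:
stability,
a minimum-degree cleaning that can only increase $e(\cdot)-t(\cdot,r)$,
a maximum $r$-cut,
boundedly many exceptional vertices,
and $\mathcal M(\mathcal F)$-freeness of the cleaned classes.
The $cn$ term comes from the exceptional vertices. The factor $(1+o(1))r$ comes from $|V_i|=n/r+o(n)$ together with the monotonicity of $\ex(m,\mathcal M)/\binom m2$. Compared with the citation, your route gives a self-contained derivation of this sharper $(1+o(1))$ form and traces each error term to its source.

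Several points need tightening.
\begin{itemize}
\item \textbf{Bad-vertex step.} A $K_r(t,\dots,t)$ chosen first among good vertices need not meet the neighbourhoods of the bad vertices, so select the bad vertices first. By iterated K\H{o}v\'ari--S\'os--Tur\'an counting, one part at a time, among $K$ bad vertices there are $t$ whose common neighbourhood in every $V_j$ has size at least $cn$, with $c=c(\beta,t,r,K)>0$. Then look for $K_r(t,\dots,t)$ among the good vertices of these sets. There are only $o(n)$ bad vertices, since each contributes at least $\beta n/2$ to $\sum_i e(V_i)=o(n^2)$, so removing them costs nothing.
\item \textbf{Finding the $K_r$.} The minimum-degree condition is the wrong tool here. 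It only bounds the non-neighbours of a good vertex outside its class by about $(\gamma+\beta)n$, which can far exceed $cn$. Use instead that the max-cut partition has only $o(n^2)$ missing cross pairs, which is negligible against $c^2n^2$.
\item \textbf{Fallback.} Drop it. Replacing a vertex's neighbourhood by a copy of another's preserves $K_{r+1}$-freeness but not $F$-freeness for general $F$. A copy of $F$ that uses both twins corresponds in the old graph only to a homomorphic image of $F$, which need not be excluded.
\item \textbf{Finiteness.} Choosing $\gamma,\beta$ with $(\gamma+\beta)\max_{F}|V(F)|\ll 1/r$, and the appeal to stability, both require $\mathcal F$ to be finite. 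That is the setting in which the paper applies the theorem.
\item \textbf{The case $r=1$.} The statement is trivial there, because $\mathcal M(\mathcal F)=\mathcal F_1\subseteq\mathcal F$. The stability theorem you invoke requires $r\ge2$ anyway.
\end{itemize}
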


\subsection{Clique-count and spectral stability}

The spectral arguments use clique-count domination and ordinary spectral
stability.  The removal lemma is included in a form that gives uniform
$o(n^s)$ bounds for all fixed higher clique counts in $F$-free graphs.

\begin{lemma}[Nikiforov \cite{Nikiforov-2002}]\label{lem:nik-main}
	If $G$ has clique number $\omega=\omega(G)\ge2$ and spectral radius $\rho=\rho(G)$, then
	\begin{equation*}
	\rho^\omega
	\le
	\sum_{s=2}^{\omega} (s-1)k_s(G)\rho^{\omega-s}.
	\end{equation*}
	In particular, if $\omega(G)\le r$, then
	\begin{equation*}
	\rho^r
	\le
	\sum_{s=2}^r (s-1)k_s(G)\rho^{r-s},
	\end{equation*}
	where $k_s(G)$ is the number of $s$-cliques in $G$.
\end{lemma}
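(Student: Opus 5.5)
This is Nikiforov's inequality, and I would prove it by induction on $\omega$, using walk counts and the Motzkin--Straus inequality. The approach follows Nikiforov's original argument.

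Let $\boldsymbol x$ be a unit Perron vector of $G$. It is enough to treat a connected component attaining $\rho$. Passing to that component does not increase the $k_s$, and its clique number is at most $\omega$. If its clique number $\omega'$ is smaller than $\omega$, the inequality for $\omega'$ multiplied by $\rho^{\omega-\omega'}$ gives the one for $\omega$; here we use $\rho\ge 1$ and the nonnegativity of the extra terms.

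Set
\begin{equation*}
f_s(\boldsymbol x):=\sum_{\{i_1,\dots,i_s\}\in K_s(G)} x_{i_1}\cdots x_{i_s},
\end{equation*}
the clique-weighted polynomial, and write $\rho_s$ for the analogous clique-based quantity. The key identity comes from applying $A$ repeatedly and expanding $\rho^s=\rho^s\sum_i x_i^2$ in terms of walks. A walk $v_1v_2\cdots v_{s+1}$ either closes into distinct vertices forming an $s$-clique or fails to do so. Organising walks by their longest clique prefix produces the recursion
\begin{equation*}
\rho^{s}\le \sum_{j} c_j\,\rho^{s-j} f_j(\boldsymbol x)\,\text{-type terms}.
\end{equation*}
Nikiforov's precise route is to prove, for every $s\le\omega$,
\begin{equation*}
\rho^{s}\le \sum_{t=2}^{s-1}(t-1)k_t\rho^{s-t}+ s!\,f_s(\boldsymbol x)\,\rho^{0}\cdot(\text{normalisation}),
\end{equation*}
by induction on $s$. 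At $s=\omega$, the final term is bounded by the Motzkin--Straus-type estimate
\begin{equation*}
f_\omega(\boldsymbol x)\le \frac{\omega-1}{\omega!}\,k_\omega,
\end{equation*}
valid for unit vectors. I would establish it by symmetric-function (Maclaurin) inequalities: $(f_\omega/k_\omega)^{1/\omega}\le (f_2/k_2)^{1/2}$ style bounds, together with $\sum x_i^2=1$.

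The inductive step is the main obstacle. It requires the inequality
\begin{equation*}
\sum_{\sigma\in K_{s+1}}\ \sum_{v\in\sigma}\ \prod_{u\in\sigma\setminus v}x_u\cdot(\text{stuff})\;\ge\;\rho\,f_s-(\text{correction } \le (s-1)k_s),
\end{equation*}
which comes from $\rho x_v=\sum_{u\sim v}x_u$. Multiplying this by the product over an $s$-clique containing $v$ and summing, each $s$-clique together with a common neighbour yields an $(s+1)$-clique. The error arises from neighbours that lie inside the clique, and these are controlled by $x_u\le 1$ and the count $k_s$. I would try first to write out this step explicitly: verify that the coefficient is exactly $s-1$ (the number of ways such a neighbour can coincide with one of the other clique vertices), and check that the case where no common extension exists at $s=\omega$ closes the recursion.

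Once the recursion holds, telescoping from $s=2$ (where $\rho^2\le 2e=(2-1)k_2\cdot 2/2$, matching the base with $\rho^2\le\sum_i d_i x_i^2\cdot\ldots$, which is Stanley/Wilf) up to $s=\omega$ gives $\rho^\omega\le\sum_{s=2}^{\omega}(s-1)k_s\rho^{\omega-s}$. The "in particular" part then follows by applying the main statement with $\omega(G)\le r$ and multiplying by $\rho^{r-\omega}$, as in the reduction above.
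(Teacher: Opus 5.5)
The paper does not prove this lemma; it only cites Nikiforov. So the question is whether your sketch is a proof on its own, and it is not.

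\textbf{The gap.} The recursion the argument depends on is never written down. The displays contain placeholders (``type terms'', ``normalisation'', ``stuff'', ``correction''), and you defer the inductive step yourself.

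The one concrete estimate you commit to is false. Take $G=K_8$ with its unit Perron vector $x_u=8^{-1/2}$. Then $f_8(\boldsymbol x)=8^{-4}=1/4096$, whereas $\frac{\omega-1}{\omega!}k_\omega=7/8!=1/5760$. Maclaurin's inequalities cannot supply it either: they concern elementary symmetric means over all $s$-subsets of the variables, while $f_s$ sums only over the cliques of $G$.

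There is also a structural obstruction to the whole scheme. The inequality is an equality for every complete multipartite graph, by \cref{lem:multipartite-polynomial-main} together with $e_s(p_1,\dots,p_r)=k_s$. Hence every step of a valid derivation must be an equality for, say, $T(n,r)$ with $r\mid n$, $\omega=r$ fixed, and Perron vector $n^{-1/2}\mathbf 1$. For that graph:
\begin{itemize}
\item $f_s(\boldsymbol x)=k_s n^{-s/2}=\Theta(n^{s/2})$, while each term $k_s\rho^{r-s}$ is $\Theta(n^r)$. So no fixed normalisation can convert $f_s$-terms into $k_s$-terms.
\item Bounding a non-vanishing error term by $x_u\le 1$, when in fact $x_u=n^{-1/2}$, is a strict inequality, so that step is inadmissible.
\end{itemize}

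Finally, your base case is garbled. The bound $\rho^2\le 2e$ is trivial, but $\omega=2$ requires $\rho^2\le e$ (Nosal's theorem). Also $(2-1)k_2\cdot 2/2$ equals $e$, not $2e$.

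\textbf{A route that works.} Keep the Perron vector out of the clique bookkeeping and use only $A(G)\boldsymbol x=\rho\boldsymbol x$ at the very end. Define:
\begin{itemize}
\item $W_m(w)$, the number of walks with $m$ edges starting at $w$, so that $W_0\equiv 1$, $W_1(w)=d(w)$, and $W_{m+1}(w)=\sum_{v\sim w}W_m(v)$;
\item $\mathcal K_s$, the set of $s$-cliques, and $c_s(v)$, the number of $s$-cliques containing $v$;
\item $\Phi_s(w):=\sum_{Q\in\mathcal K_s}|Q\cap N(w)|=\sum_{v\sim w}c_s(v)$.
\end{itemize}
For $Q\in\mathcal K_s$ we have $|Q\cap N(v)|\le s-1$, unless $Q\cup\{v\}\in\mathcal K_{s+1}$, in which case $|Q\cap N(v)|=s$. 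Hence
\[
\Phi_s(v)\le (s-1)k_s+c_{s+1}(v).
\]
Note that $W_2(w)=\Phi_2(w)$. Summing the previous bound over $v\sim w$ and inducting on $m$ gives
\[
W_m(w)\le\sum_{s=2}^{m-1}(s-1)k_s\,W_{m-s}(w)+\Phi_m(w)\qquad(m\ge 2,\ w\in V(G)).
\]
At $m=\omega+1$ the last term vanishes, since there are no $(\omega+1)$-cliques. Now multiply by $x_w$, where $\boldsymbol x\ge 0$ is a nonzero eigenvector for $\rho$, and sum over $w$. Using $\sum_w W_j(w)x_w=\mathbf 1^TA(G)^j\boldsymbol x=\rho^j\,\mathbf 1^T\boldsymbol x$ and $\mathbf 1^T\boldsymbol x>0$, this yields
\[
\rho^{\omega+1}\le\sum_{s=2}^{\omega}(s-1)k_s\rho^{\omega+1-s}.
\]
Dividing by $\rho>0$ (which holds since $\omega\ge 2$) gives the lemma. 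No reduction to a connected component is needed. Your derivation of the ``in particular'' form, by multiplying through by $\rho^{r-\omega}$, is fine as written.
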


\begin{lemma}[Zykov \cite{Zykov}]\label{lem:zykov-clique}
	Let $r\ge1$, and let $H$ be an $n$-vertex $K_{r+1}$-free graph.  Then, for
	every $1\le s\le r$,
	\begin{equation*}
	k_s(H)\le k_s(T(n,r)).
	\end{equation*}
	Equivalently, among all $n$-vertex $K_{r+1}$-free graphs, the balanced Tur\'an graph maximizes the number of $s$-cliques simultaneously for all
	$s\le r$.
\end{lemma}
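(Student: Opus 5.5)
This is Zykov's classical symmetrization theorem, and I would prove it by Zykov symmetrization, arguing for each fixed $s$ with $1\le s\le r$. The cases $s=1$ (trivial, $k_1=n$) and $s=2$ (Tur\'an's theorem) are included but need no separate treatment. Among all $n$-vertex $K_{r+1}$-free graphs, choose $H$ maximizing $k_s(H)$; among those, choose one with the maximum number of pairs of nonadjacent vertices having identical neighborhoods. I then aim to show that $H$ may be taken complete multipartite. A complete multipartite $K_{r+1}$-free graph has at most $r$ nonempty parts, and we may view it as having exactly $r$ parts, some possibly empty. Since $k_s$ of a complete $r$-partite graph is the elementary symmetric polynomial $e_s(n_1,\dots,n_r)$ of the part sizes, the problem then reduces to a balancing step.

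\textbf{Symmetrization step.} For a vertex $v$, let $c_s(v)$ be the number of $s$-cliques containing $v$. For nonadjacent $u,v$, replacing $u$ by a clone of $v$ (delete $u$'s edges, join $u$ to $N(v)$) keeps the graph $K_{r+1}$-free. Any clique in the new graph uses at most one of $u,v$, and a clique using $u$ corresponds to a clique of the old graph using $v$. The resulting $s$-clique count is
\[
k_s(H)-c_s(u)+c_s(v),
\]
which is exact because $u\not\sim v$, so no $s$-clique contains both. Hence $c_s(u)=c_s(v)$ for all nonadjacent $u,v$ by maximality, and cloning preserves $k_s$.

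The standard claim is that nonadjacency is an equivalence relation. Suppose $u\not\sim v$ and $v\not\sim w$ but $u\sim w$. Replace both $u$ and $w$ by clones of $v$. Cliques containing both $u$ and $w$ are lost, and there is at least one such clique when $s=2$. For larger $s$ this may fail, which is the main obstacle, and I would handle it through the tie-breaking. For the edge count $k_2$, the strict gain gives the contradiction directly. For general $s$, I would instead argue via the identical-neighborhood potential: after cloning, the count of $s$-cliques does not decrease, since the double clone loses at most the cliques through $u$ and $w$ and gains $2c_s(v)=c_s(u)+c_s(w)$, while the number of twin pairs strictly increases. This contradicts the choice of $H$.

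A cleaner alternative avoids the relation argument: iteratively clone along nonadjacent pairs with a lexicographic potential, as in Zykov's original proof, ending at a complete multipartite graph with $k_s$ no smaller. Either way, this yields a complete $r'$-partite graph with $r'\le r$ and $k_s$ at least the maximum.

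\textbf{Balancing.} If two parts satisfy $n_i\ge n_j+2$, move one vertex from part $i$ to part $j$. Writing
\[
e_s=n_in_j\,e_{s-2}(\text{rest})+(n_i+n_j)\,e_{s-1}(\text{rest})+e_s(\text{rest}),
\]
only the product $n_in_j$ changes, and it increases. The coefficient $e_{s-2}(\text{rest})$ is nonnegative, so $e_s$ does not decrease. Repeating this reaches $T(n,r)$, giving $k_s(H)\le k_s(T(n,r))$.

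I expect the only delicate point to be making the symmetrization monotone for general $s$. Specifically, one must ensure that each cloning step never decreases $k_s$ and that the process terminates. This is exactly where the exactness of $-c_s(u)+c_s(v)$ for nonadjacent pairs is used.
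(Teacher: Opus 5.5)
The paper does not prove this lemma; it quotes it from Zykov. Your argument therefore has to stand on its own, and it breaks at exactly the step you flagged.

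\textbf{The gap.} After replacing both $u$ and $w$ by clones of $v$, you assert that the number of twin pairs strictly increases. Write $C_x$ for the class of vertices whose neighborhood equals $N(x)$. The double clone guarantees only the $2|C_v|+1$ new pairs inside $C_v\cup\{u,w\}$. It can destroy the $(|C_u|-1)+(|C_w|-1)$ pairs linking $u$ and $w$ to their former twins, and nothing forces the balance to be positive.

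For a concrete case, take $K_{3,3}$ plus an isolated vertex $v$, with $u,w$ on opposite sides and $s=r=3$. Then $u\not\sim v$, $v\not\sim w$, $u\sim w$, no triangle contains $uw$, and $c_3\equiv 0$. These are all the facts your argument invokes, yet the twin-pair count drops from $6$ to $5$. So no contradiction is reached. The fallback ``lexicographic potential as in Zykov's original proof'' is only named, never specified, so it does not close the gap either.

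\textbf{A repair.} Drop the double clone and the transitivity argument altogether. If $x\not\sim y$ are not twins, label them so that $|C_x|\le|C_y|$, and replace $x$ alone by a clone of $y$.
\begin{itemize}
\item This keeps the graph $K_{r+1}$-free and, since $c_s(x)=c_s(y)$, leaves $k_s$ unchanged.
\item Every twin pair avoiding $x$ survives: a vertex is adjacent to the new $x$ exactly when it is adjacent to $y$, and twins agree on that.
\item The vertex $x$ acquires all of $C_y$ as twins and loses at most $|C_x|-1$. The count therefore rises by at least $|C_y|-|C_x|+1\ge1$, contradicting your choice of $H$.
\end{itemize}
Hence in $H$ any two nonadjacent vertices are twins. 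The twin classes are then independent sets that are pairwise completely joined, so $H$ is complete multipartite with at most $r$ parts. Your balancing step, which is correct, then finishes the proof.

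\textbf{An alternative route.} The paper's own maximum-degree decomposition gives a self-contained proof by induction on $r$. Let $u$ have degree $\Delta$, put $B=N(u)$ and $A=V\setminus B$. Then
\[
k_s(H)\le k_s(H[B])+\sum_{a\in A}k_{s-1}(H[N(a)]).
\]
All these neighborhoods are $K_r$-free, and $d(a)\le\Delta$ while $k_{s-1}(T(m,r-1))$ is monotone in $m$. Induction therefore gives $k_s(H)\le k_s\bigl((n-\Delta)K_1\vee T(\Delta,r-1)\bigr)$. The same balancing step then carries this complete $r$-partite graph to $T(n,r)$.
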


\begin{lemma}[Erd\H{o}s--Frankl--R\"odl removal theorem
	\cite{EFR}]\label{lem:EFR-main}
	Let $\mathcal F$ be a fixed finite family of graphs with $\min_{F\in\mathcal F}\chi(F)=r+1$.  For every $\mu>0$ there is
	$n_0$ such that every $\mathcal F$-free graph $G$ on $n\ge n_0$ vertices can be made $K_{r+1}$-free by deleting at most $\mu n^2$ edges.  Consequently, for every fixed $s\ge r+1$, $k_s(G)=o(n^s)$	uniformly over all $\mathcal F$-free $n$-vertex graphs.
\end{lemma}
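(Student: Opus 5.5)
The statement is the Erd\H{o}s--Frankl--R\"odl removal theorem, which the paper cites. The task is to show two things. First, every $\mathcal F$-free graph on $n$ vertices becomes $K_{r+1}$-free after deleting at most $\mu n^2$ edges. Second, $k_s(G)=o(n^s)$ uniformly for each fixed $s\ge r+1$.

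\textbf{Step 1: reduce to counting copies.} Fix $F_0\in\mathcal F$ with $\chi(F_0)=r+1$, and let $f=|V(F_0)|$. Then $F_0\subseteq K_{r+1}(f)$, the complete $(r+1)$-partite graph with all parts of size $f$. So an $\mathcal F$-free graph $G$ is $K_{r+1}(f)$-free. The first key input is supersaturation: by the Erd\H{o}s--Simonovits supersaturation / blow-up counting argument, for every $\eta>0$ there is $\eta'>0$ such that any graph with at least $\eta n^{r+1}$ copies of $K_{r+1}$ contains $K_{r+1}(f)$ once $n$ is large. The proof is an iterated K\H{o}v\'ari--S\'os--Tur\'an / Erd\H{o}s hypergraph argument on the $(r+1)$-uniform hypergraph of cliques, using that complete $(r+1)$-partite $(r+1)$-graphs have Tur\'an density zero. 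Hence every $\mathcal F$-free $G$ on $n\ge n_0$ vertices satisfies $k_{r+1}(G)\le\eta n^{r+1}$, for any prescribed $\eta>0$.

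\textbf{Step 2: apply the graph removal lemma for $K_{r+1}$.} By Szemer\'edi's regularity lemma with parameter $\epsilon\ll\mu$, take an $\epsilon$-regular partition into $M=M(\epsilon)$ parts. Delete three kinds of edges:
\begin{itemize}
\item edges inside parts,
\item edges in irregular pairs,
\item edges in pairs of density below $\mu/2$.
\end{itemize}
Together these are at most $\mu n^2$ edges. Suppose a copy of $K_{r+1}$ survives. Its vertices lie in $r+1$ distinct parts whose pairs are all regular with density at least $\mu/2$. The counting lemma then yields at least $c(\mu,\epsilon)\,(n/M)^{r+1}$ copies of $K_{r+1}$ in $G$. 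Choosing $\eta<c(\mu,\epsilon)M^{-(r+1)}$ in Step 1 gives a contradiction. (Alternatively, apply the counting lemma directly to $K_{r+1}(f)$, since a regular $(r+1)$-tuple of dense pairs contains any fixed blow-up. This bypasses supersaturation and makes Step 1 unnecessary, and I would adopt this route.)

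\textbf{Step 3: the consequence.} Fix $s\ge r+1$. Every $s$-clique contains a $K_{r+1}$, so after removing $\mu n^2$ edges none survives. Hence every $s$-clique of $G$ contains a removed edge. Each removed edge lies in at most $n^{s-2}$ $s$-cliques, so $k_s(G)\le\mu n^2\cdot n^{s-2}=\mu n^s$. Since $\mu$ is arbitrary, $k_s(G)=o(n^s)$ uniformly over all $\mathcal F$-free graphs.

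\textbf{Main obstacle.} The technical heart is the counting/embedding lemma: showing that $r+1$ clusters that are pairwise $\epsilon$-regular of density $\ge\mu/2$ contain $K_{r+1}(f)$, with $\epsilon$ chosen small depending on $\mu,r,f$. This is standard, done by greedy embedding while maintaining typical common neighbourhoods of size $\ge(\mu/4)^{k}n/M$.
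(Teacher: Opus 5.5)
Your proposal is correct. The part the paper actually proves, the consequence $k_s(G)=o(n^s)$, is your Step 3 verbatim. The paper bounds the count by $|R|\binom{n-2}{s-2}$ rather than $\mu n^2\cdot n^{s-2}$, which makes no difference. Uniformity follows in both versions because $n_0$ depends only on $\mu$ and $\mathcal F$.

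The genuine difference is that the paper never proves the deletion statement. It takes it as a black box, citing the fixed-family form of the Erd\H{o}s--Frankl--R\"odl theorem. Your Steps 1--2 instead re-derive it via the regularity lemma, and that derivation is sound. Of your two routes, the direct one is cleaner: a surviving $K_{r+1}$ forces $r+1$ clusters that are pairwise $\epsilon$-regular with density at least $\mu/2$, and the embedding lemma then yields $K_{r+1}(f)\supseteq F_0$. This contradicts $\mathcal F$-freeness outright. It also makes Step 1, the supersaturation input via Erd\H{o}s's theorem that $(r+1)$-partite $(r+1)$-graphs have Tur\'an density zero, unnecessary.

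Your approach buys self-containment: it shows the lemma needs nothing beyond regularity and the embedding lemma. The citation buys brevity, and nothing later in the paper uses more than the black-box statement, so it costs nothing.
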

\begin{proof}
	The deletion statement is the fixed-forbidden-family form of the
	Erd\H{o}s--Frankl--R\"odl theorem.  If $R$ is a set of at most $\mu n^2$
	edges whose deletion leads $G$ to be $K_{r+1}$-free, then every $s$-clique with	$s\ge r+1$ contains an edge of $R$.  Hence
	\begin{equation*}
	k_s(G)
	\le
	|R|\binom{n-2}{s-2}
	\le
	\mu n^2\binom{n-2}{s-2}.
	\end{equation*}
	Since $\mu>0$ is arbitrary, the consequence follows.
\end{proof}

\begin{lemma}[\cite{Desai-Kang-Li-Ni-Tait-Wang}]
	\label{lem:spectral-stability-1}
	Let $\mathcal F$ be a family of graphs with $\min_{F\in\mathcal F}\chi(F)=r+1\ge2$. For every $\varepsilon>0$, there exist $\delta=\delta(\varepsilon,\mathcal F)>0$ and $n_1$ such that the following holds.  If $G$ is an $\mathcal F$-free graph of order
	$n\ge n_1$ with $\rho(G)\ge
	\left(1-\frac1r-\delta\right)n$, then $G$ can be obtained from $T(n,r)$ by adding and deleting at most	$\varepsilon n^2$ edges.
\end{lemma}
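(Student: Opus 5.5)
This lemma is a cited result from \cite{Desai-Kang-Li-Ni-Tait-Wang}, so in the paper it is imported rather than reproved. To reprove it, I would transfer spectral closeness to edge closeness and then apply \cref{thm:ES-stability}. The main tools are the removal lemma (\cref{lem:EFR-main}), Nikiforov's clique inequality (\cref{lem:nik-main}) and Zykov's bound (\cref{lem:zykov-clique}).

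\textbf{Reduction to a $K_{r+1}$-free graph.} Fix $\varepsilon>0$ and let $\delta_0=\delta(\varepsilon/2,\mathcal F)$ be the constant from \cref{thm:ES-stability}. Choose $\mu\ll\delta_0$ and apply \cref{lem:EFR-main} to $G$: delete a set $R$ of at most $\mu n^2$ edges so that $G'=G-R$ is $K_{r+1}$-free. Since $G'$ is a subgraph of $G$, it is still $\mathcal F$-free. The spectral radius is stable under this deletion. Indeed, if $\boldsymbol{x}$ is a unit Perron vector of $G$, then
\begin{equation*}
\rho(G)-\rho(G')\le 2\sum_{uv\in R}x_ux_v\le \rho(R)\le\sqrt{2|R|}\le\sqrt{2\mu}\,n .
\end{equation*}
Hence $\rho(G')\ge(1-\frac1r-\delta-\sqrt{2\mu})n$.

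\textbf{From spectral radius to edge count.} The main step is to show that a $K_{r+1}$-free graph $G'$ with $\rho(G')\ge(1-\frac1r-\eta)n$ satisfies $e(G')\ge t(n,r)-\delta_0 n^2/2$ when $\eta$ is small. I would argue by contraposition using \cref{lem:nik-main} and \cref{lem:zykov-clique}. Write $\rho=\rho(G')$. Then
\begin{equation*}
\rho^r\le e(G')\rho^{r-2}+\sum_{s=3}^r(s-1)k_s(T(n,r))\rho^{r-s}.
\end{equation*}
At $\rho=(1-\frac1r)n$ and $e(G')=t(n,r)$, the right-hand side equals the left-hand side up to $O(n^{r-1})$. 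This is the equality case for $T(n,r)$, whose spectral radius is $(1-\frac1r)n+O(1)$.

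Now suppose $e(G')\le t(n,r)-\delta_0n^2/2$. The right-hand side then drops by about $\frac{\delta_0}{2}n^2\rho^{r-2}$. Both sides are polynomials in $\rho$ whose leading behaviour at scale $n$ has a strictly positive derivative gap at $\rho=(1-\frac1r)n$. Consequently $\rho\le(1-\frac1r-c\delta_0)n$ for some $c=c(r)>0$. This contradicts the lower bound on $\rho(G')$ once $\delta+\sqrt{2\mu}<c\delta_0$.

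Checking this derivative gap is the main obstacle. Concretely, I would normalise by $\rho=yn$ and verify that $f(y)=y^r-\sum_s(s-1)\binom rs r^{-s}y^{r-s}\cdot(\text{const})$ has a simple root at $y=1-\frac1r$ and is increasing beyond it. If this computation is messy, I would instead use the cruder route $\rho\le\sqrt{2e(G')(1-1/r)}$. This is Nikiforov's bound for $K_{r+1}$-free graphs and follows from the same inequality. With it, $e(G')\ge \frac{\rho^2}{2(1-1/r)}\ge t(n,r)-O(\eta)n^2$ directly.

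\textbf{Conclusion.} Since $e(G)\ge e(G')\ge t(n,r)-\delta_0n^2$, \cref{thm:ES-stability} shows that $G$ differs from $T(n,r)$ in at most $\varepsilon n^2/2\le\varepsilon n^2$ edges. The constant $n_1$ is the maximum of the thresholds required by the removal lemma and the stability theorem. The case $r=1$ is degenerate (then $\rho\ge -\delta n$ holds trivially) and does not arise under the paper's standing assumption $r+1\ge3$.
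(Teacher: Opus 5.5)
The paper does not prove this lemma; it is quoted from \cite{Desai-Kang-Li-Ni-Tait-Wang}, so there is no in-paper argument to compare with. Your reconstruction is correct and uses only the paper's own preliminaries. You remove edges to reach a $K_{r+1}$-free $G'$ with $\rho(G)-\rho(G')\le\sqrt{2|R|}$, lower-bound $e(G')$, and apply \cref{thm:ES-stability} to $G$.

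Your main route through \cref{lem:nik-main} and \cref{lem:zykov-clique} is the same mechanism as \cref{lem:poly-stability-main}, with $T(n,r)$ in place of $S_{n,\Delta}^{(r)}$. However, the property you single out, a positive derivative at the root, is not what drives the conclusion. What is needed is an upper bound $|P_T'(\lambda)|\le C_r n^{r-1}$ on $[0,n]$, together with the sign pattern from \cref{lem:multipartite-polynomial-main}. From $P_T(\rho)\le-\frac{\delta_0}{2}n^2\rho^{r-2}$ you get $\rho<\rho(T(n,r))$. The mean value theorem then gives $\rho(T(n,r))-\rho\ge c\,\delta_0 n$ once $\rho\ge n/4$. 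In your normalised form, plain continuity of $f$ suffices.

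Your fallback $\rho(G')^2\le 2(1-\frac1r)e(G')$ is cleaner and gives $e(G')\ge t(n,r)-\eta n^2$ at once. Note that this is Nikiforov's theorem, provable by Cauchy--Schwarz plus Motzkin--Straus applied to the probability vector $(x_v^2)$. It does not follow from \cref{lem:nik-main} and Zykov alone, since Zykov bounds $k_s$ in terms of $n$, not $e$.

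Three bookkeeping points:
\begin{itemize}
\item The statement allows an arbitrary family, whereas \cref{lem:EFR-main} and \cref{thm:ES-stability} are for finite families with $r+1\ge3$. First pass to a single $F_0\in\mathcal F$ with $\chi(F_0)=r+1$.
\item The case $r=1$ belongs to the statement and should not be set aside. There the hypothesis is vacuous and the conclusion reads $e(G)\le\varepsilon n^2$, which is the K\H{o}v\'ari--S\'os--Tur\'an bound for the bipartite $F_0$.
\item Because of the $\sqrt{2\mu}$ loss, you need $\mu=O(\delta_0^2)$ rather than merely $\mu\ll\delta_0$. Your closing condition $\delta+\sqrt{2\mu}<c\,\delta_0$ already enforces this.
\end{itemize}
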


\subsection{Decomposition at a maximum-degree vertex}

The following decomposition at a maximum-degree vertex is the basic reason that the graph
$S_{n,\Delta}^{(r)}$ appears.  It separates the neighborhood of a
maximum-degree vertex from the remaining vertices and converts the
maximum-degree constraint into an edge-defect inequality.

Fix a graph $G$ with $\Delta(G)=\Delta$, and choose a vertex $u$ of degree
$\Delta$.  Put
\begin{equation}\label{eq:notation-1}
	B:=N_G(u),
	\quad
	A:=V(G)\setminus B,
	\quad
	|B|=\Delta,
	\quad
	|A|=q:=n-\Delta.
\end{equation}
Thus $u\in A$.  Let
\begin{equation*}
I:=e_G(A),
\quad
M:=q\Delta-e_G(A,B),
\end{equation*}
where $M$ is the number of missing edges between $A$ and $B$.

\begin{lemma}\label{lem:root-defect-main}
	With the notation above, $M\ge2I$. Consequently, $e_G(A,B)+e_G(A)\le q\Delta$. Moreover, $M+I\le3(M-I)$,	and in particular, if $M-I=o(n^2)$, then $M+I=o(n^2)$.
\end{lemma}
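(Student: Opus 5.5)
Every vertex of $A$ has degree at most $\Delta$ in $G$, and this is the only input the argument uses. Summing degrees over $A$ gives
\begin{equation*}
\sum_{a\in A} d_G(a)=2e_G(A)+e_G(A,B)=2I+q\Delta-M\le q\Delta .
\end{equation*}
This is exactly $M\ge 2I$. The proof needs no $F$-freeness and no structure of $B$. I will also note that the vertex $u$ contributes no edges inside $A$, though this is not needed.

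The consequence follows directly. We have $e_G(A,B)+e_G(A)=q\Delta-M+I\le q\Delta-2I+I=q\Delta-I\le q\Delta$, using $I\ge0$.

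For $M+I\le 3(M-I)$, I rewrite it as $4I\le 2M$, which is again $M\ge 2I$. The final clause is then immediate. Both $M$ and $I$ are nonnegative, and $M-I\ge I\ge0$, so $M+I\le 3(M-I)=o(n^2)$.

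I expect no real obstacle. The only point to check is that the degree count within $A$ counts each edge inside $A$ twice and each $A$–$B$ edge once, which gives the factor $2I$.
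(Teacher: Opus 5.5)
Your proof is correct and is essentially the paper's argument. The paper applies $d_A(a)+d_B(a)\le\Delta=|B|$ vertex by vertex and then sums over $a\in A$, which is exactly your summed degree count $2I+(q\Delta-M)\le q\Delta$. The remaining deductions, $e_G(A,B)+e_G(A)=q\Delta-(M-I)$ and $M+I=(M-I)+2I\le 3(M-I)$, match the paper's step for step.
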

\begin{proof}
	For every $a\in A$, the maximum-degree condition gives $d_A(a)+d_B(a)=d_G(a)\le\Delta=|B|$. Hence $|B|-d_B(a)\ge d_A(a)$.  Summing over $a\in A$ gives $M=\sum_{a\in A}(|B|-d_B(a))\ge\sum_{a\in A}d_A(a)=2I$.
	Therefore
	\begin{equation*}
	e_G(A,B)+e_G(A)=q\Delta-(M-I)\le q\Delta.
	\end{equation*}
	Also $M-I\ge I$, and hence
	\begin{equation*}
	M+I=(M-I)+2I\le3(M-I).
	\end{equation*}
	The final assertion is immediate.
\end{proof}

\section{\texorpdfstring{Proof of \cref{thm:mainresult-1}}{Proof of Theorem 1.6}}\label{sec:exact-clique}
We prove \Cref{thm:mainresult-1}.  The edge argument is included because the maximum-degree decomposition also yields the clique-count comparison needed for the
spectral assertion.

\begin{lemma}\label{lem:multipartite-polynomial-main}
	Let $K_{p_1,\ldots,p_r}$ be a complete $r$-partite graph with at least two nonempty parts, where $p_i\ge0$ and $\sum_{i=1}^{r} p_i=n$.  Let $e_s(p_1,\ldots,p_r)$ denote the $s$-th elementary symmetric polynomial.  Its spectral radius $\rho(K_{p_1,\ldots,p_r})$ is the unique positive root of
	\begin{equation*}
		\sum_{i=1}^r \frac{p_i}{\lambda+p_i}=1,
	\end{equation*}
	and equivalently of
	\begin{equation*}
		P_{p_1,\ldots,p_r}(\lambda):=\lambda^r-\sum_{s=2}^r(s-1)e_s(p_1,\ldots,p_r)\lambda^{r-s}=0.
	\end{equation*}
	Moreover, $P_{p_1,\ldots,p_r}(\lambda)<0$ for $0<\lambda<\rho(K_{p_1,\ldots,p_r})$ and $P_{p_1,\ldots,p_r}(\lambda)>0$ for $\lambda>\rho(K_{p_1,\ldots,p_r})$.
\end{lemma}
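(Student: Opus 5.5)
We work directly with the eigenvector equations of the complete $r$-partite graph. Let $V_1,\dots,V_r$ be the parts, with $|V_i|=p_i$, and discard empty parts; at least two parts are nonempty. The graph is connected, so the Perron vector $\boldsymbol x$ is positive. By symmetry under automorphisms permuting vertices within a part, together with simplicity of $\rho$, $\boldsymbol x$ is constant on each part; write $x_i$ for its value on $V_i$. Put $\rho=\rho(K_{p_1,\ldots,p_r})$ and $S=\sum_j p_jx_j$. The eigenvalue equation at a vertex of $V_i$ reads $\rho x_i=S-p_ix_i$, so $x_i=S/(\rho+p_i)$. Multiplying by $p_i$ and summing over $i$ gives
\begin{equation*}
S=S\sum_i\frac{p_i}{\rho+p_i}.
\end{equation*}
Since $S>0$, this yields $\sum_i p_i/(\rho+p_i)=1$. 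Parts with $p_i=0$ contribute $0$, so they may be kept harmlessly.

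For uniqueness, set $f(\lambda)=\sum_i p_i/(\lambda+p_i)$. On $(0,\infty)$ this function is strictly decreasing, since at least one $p_i>0$. As $\lambda\to0^+$, $f(\lambda)$ tends to the number of nonempty parts, which is at least $2$; as $\lambda\to\infty$, it tends to $0$. Hence $f(\lambda)=1$ has exactly one positive root, and it must be $\rho$. Moreover $f>1$ on $(0,\rho)$ and $f<1$ on $(\rho,\infty)$.

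To pass to the polynomial, multiply $1-f(\lambda)$ by $Q(\lambda)=\prod_i(\lambda+p_i)=\sum_{s=0}^r e_s\lambda^{r-s}$, which is positive for $\lambda>0$. Then
\begin{equation*}
Q(\lambda)-\sum_i p_i\prod_{j\ne i}(\lambda+p_j).
\end{equation*}
The coefficient of $\lambda^{r-s}$ in $\sum_i p_i\prod_{j\ne i}(\lambda+p_j)$ is $s\,e_s$, because each $s$-subset of indices is counted once for each choice of the distinguished index $i$ inside it. So the coefficient of $\lambda^{r-s}$ in $Q(\lambda)(1-f(\lambda))$ is $e_s-s e_s=-(s-1)e_s$. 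This vanishes for $s=1$ and equals $1$ for $s=0$, so
\begin{equation*}
Q(\lambda)\bigl(1-f(\lambda)\bigr)=P_{p_1,\ldots,p_r}(\lambda).
\end{equation*}
Since $Q>0$ on $(0,\infty)$, $P$ has the same sign as $1-f$ there. This gives the unique positive root $\rho$, with $P<0$ on $(0,\rho)$ and $P>0$ on $(\rho,\infty)$.

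The main delicate point is the combinatorial coefficient count $s\,e_s$, together with handling parts of size zero; both are routine.
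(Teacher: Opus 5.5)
Your proposal is correct and follows essentially the same route as the paper: derive the equation $\sum_i p_i/(\lambda+p_i)=1$ from the eigenvector equations for a Perron vector that is constant on parts, get uniqueness from strict monotonicity, and read off the polynomial and its sign pattern from $P(\lambda)=\prod_i(\lambda+p_i)\bigl(1-\sum_i p_i/(\lambda+p_i)\bigr)$. You also make explicit two points the paper only asserts: the constancy of the Perron vector on parts (via automorphisms and simplicity of $\rho$), and the coefficient count $s\,e_s$.
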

\begin{proof}
	The Perron vector is constant on each part.  Let its value on the $i$-th part be $y_i$, and put $Y=\sum_{i=1}^{r} p_i y_i$. Let $\lambda$ be an eigenvalue of $A(K_{p_1,\ldots,p_r})$.  Then the eigenvalue equations are
	\begin{equation*}
		\lambda y_i=Y-p_iy_i,
		\quad\text{and hence}\quad
		y_i=\frac{Y}{\lambda+p_i}.
	\end{equation*}
	Since the Perron vector is positive, we have $Y>0$. Substitution into $Y=\sum_{i=1}^{r} p_i y_i$ gives
	\begin{equation*}
		Y=\sum_{i=1}^{r}\frac{p_iY}{\lambda+p_i},\quad \text{and}\quad \sum_{i=1}^r \frac{p_i}{\lambda+p_i}=1.
	\end{equation*}
	
	The left-hand side is strictly decreasing on $(0,\infty)$, tends to the number of positive parts as $\lambda\to 0^{+}$, and tends to $0$ as $\lambda\to\infty$; hence the positive root is unique whenever the graph has at least two non-empty parts.  Multiplying the equation by $\prod_{i=1}^{r}(\lambda+p_i)$ and collecting elementary symmetric terms gives the polynomial form.  Finally,
	\begin{equation*}
		P_{p_1,\ldots,p_r}(\lambda)
		=\prod_{i=1}^r(\lambda+p_i)\left(1-\sum_{i=1}^r\frac{p_i}{\lambda+p_i}\right),
	\end{equation*}
	and the factor in parentheses is strictly increasing.  This proves the stated sign property.
\end{proof}

\begin{proof}[Proof of \cref{thm:mainresult-1}]
(i) Use the decomposition at a maximum-degree vertex from
\eqref{eq:notation-1}.  Since $G$ is $K_{r+1}$-free, the neighborhood graph
$G[B]$ is $K_r$-free.  By Tur\'an's theorem,
$e(G[B])\le t(\Delta,r-1)$.  By \cref{lem:root-defect-main}, $e_G(A,B)+e_G(A)\le q\Delta$. Therefore $e(G)\le t(\Delta,r-1)+q\Delta=e(S_{n,\Delta}^{(r)})$.
If equality holds, then $G[B]\cong T(\Delta,r-1)$ and equality holds in the
maximum-degree defect inequality.  Thus $M=I=0$, so $A$ is independent and all
edges between $A$ and $B$ are present.  Hence $G\cong S_{n,\Delta}^{(r)}$.

(ii) We first establish the clique-count comparison
\begin{equation}\label{eq:clique-dom-main}
	k_s(G)\le k_s(S_{n,\Delta}^{(r)})
	\quad(2\le s\le r).
\end{equation}
	Choose $u\in V(G)$ with $d_G(u)=\Delta$ and use notations \eqref{eq:notation-1}. Since $G$ is $K_{r+1}$-free, the graph $G[B]$ is $K_r$-free.  \cref{lem:zykov-clique} gives
	\begin{equation}\label{eq:clique-U-bound-main}
		k_s(G[B])\le k_s(T(\Delta,r-1)), \quad \text{for } 2\le s\le r.
	\end{equation}
	For $s=r$ both sides of \eqref{eq:clique-U-bound-main} are zero.
	
	Now count the $s$-cliques that are not completely contained in $B$.  Such a clique contains at least one vertex of $A$.  If we expose one such vertex $a\in A$, the remaining $s-1$ vertices form an $(s-1)$-clique in the neighborhood graph $G[N_G(a)]$.  This gives the inequality
	\begin{equation}\label{eq:clique-rooted-ineq-main}
		k_s(G)-k_s(G[B])
		\le \sum_{a\in A} k_{s-1}(G[N_G(a)]).
	\end{equation}
	It is essential that \eqref{eq:clique-rooted-ineq-main} is an inequality, not an equality: if a clique contains two or more vertices of $A$, it is counted once for each exposed vertex of $A$.
	
	For every $a\in A$, the neighborhood graph $G[N_G(a)]$ is $K_r$-free; otherwise $a$ together with a $K_r$ in its neighborhood would form a $K_{r+1}$.  Also $d_G(a)\le\Delta$.  Applying \Cref{lem:zykov-clique} again,
	\begin{equation}\label{eq:neighbour-clique-bound-main}
		k_{s-1}(G[N_G(a)])
		\le k_{s-1}(T(d_G(a),r-1))
		\le k_{s-1}(T(\Delta,r-1)).
	\end{equation}
	For $s=2$, this simply reads $k_1(T(d_G(a),r-1))=d_G(a)\le\Delta=k_1(T(\Delta,r-1))$.
	Combining \eqref{eq:clique-U-bound-main}, \eqref{eq:clique-rooted-ineq-main} and \eqref{eq:neighbour-clique-bound-main}, we obtain
	\begin{equation*}
		k_s(G)
		\le k_s(T(\Delta,r-1))+qk_{s-1}(T(\Delta,r-1)).
	\end{equation*}
	The right-hand side is exactly the number of $s$-cliques in
	$S_{n,\Delta}^{(r)}=qK_1\vee T(\Delta,r-1)$, because such a clique either lies inside the Tur\'an component or uses one vertex of the independent part and $s-1$ vertices from distinct Tur\'an classes.  Hence $k_s(G)\le k_s(S_{n,\Delta}^{(r)})$.
	
	For $s=2$, equality in the displayed inequality is equivalent to equality in
	the edge theorem.  Thus equality in \eqref{eq:clique-dom-main} for $s=2$
	implies $G\cong S_{n,\Delta}^{(r)}$ by part (i).

 Let $S=S_{n,\Delta}^{(r)}$ and let $\rho=\rho(G)$.  In the complete $r$-partite graph $S$, the elementary symmetric coefficient $e_s$ of its part sizes is exactly $k_s(S)$.  Since $G$ is $K_{r+1}$-free, \Cref{lem:nik-main} and \eqref{eq:clique-dom-main} give
\begin{equation*}
       \rho^r
       \le \sum_{s=2}^r(s-1)k_s(G)\rho^{r-s}
       \le \sum_{s=2}^r(s-1)k_s(S)\rho^{r-s}.
\end{equation*}
Equivalently, if $P_S$ denotes the complete $r$-partite polynomial of $S$, then
\begin{equation*}
       P_S(\rho)
       =\rho^r-\sum_{s=2}^r(s-1)k_s(S)\rho^{r-s}
       \le0.
\end{equation*}
By \Cref{lem:multipartite-polynomial-main}, this implies $\rho\le\rho(S)$.

It remains to justify the equality case without assuming simultaneous equality in all clique counts.  Suppose $\rho=\rho(S)$.  Then $P_S(\rho)=0$.  Keeping the edge term in the preceding comparison gives
\begin{equation*}
	P_S(\rho)\le \sum_{s=2}^r(s-1)\bigl(k_s(G)-k_s(S)\bigr)\rho^{r-s}\le -\bigl(e(S)-e(G)\bigr)\rho^{r-2}.
\end{equation*}
The second inequality uses $k_s(G)\le k_s(S)$ for $s\ge3$ and the $s=2$ term exactly.  By \Cref{thm:mainresult-1} (i), $e(S)-e(G)\ge0$.  Hence
\begin{equation*}
       0=P_S(\rho)\le-\bigl(e(S)-e(G)\bigr)\rho^{r-2}\le0,
\end{equation*}
so $e(G)=e(S)$.  The equality case of \Cref{thm:mainresult-1} (i) now yields $G\cong S_{n,\Delta}^{(r)}$.
\end{proof}

\section{\texorpdfstring{Proof of \cref{thm:mainresult-2}}{Proof of Theorem 1.7}}\label{sec:stability-theory}
We first consider the case $a(F)>1$.  We then prove the theorem for $a(F)=1$
by using the maximum-degree partition and local exclusion of the
boundary graph $\partial_cF$.

By \cref{def:color-boundary-main}, the local forbidden family when $a(F)=1$ is $\partial_cF$.  The coarse stability argument uses
only the Erd\H{o}s--Stone--Simonovits estimate $\ex(n,\partial_cF)=t(n,r-1)+o_F(n^2)$, where $t(n,1)=0$ when $r=2$.

\begin{lemma}\label{lem:J-scale-main}
	Fix $r$ and $t\ge1$.  There is a constant $C=C(r,t)$ such that,
	uniformly for $\left\lceil \frac{r-1}{r}n\right\rceil\le\Delta\le n-1$,
	the family $\mathcal J^t_{n,\Delta,r,=}$ is nonempty for all sufficiently
	large $n$.  Moreover, after a suitable relabeling, every
	$J\in\mathcal J^t_{n,\Delta,r}$ satisfies
	\begin{equation*}
	d_{\mathrm{edit}}(J,T(n,r))\le Cn,
	\quad |e(J)-t(n,r)|\le Cn,
	\quad |\rho(J)-\rho(T(n,r))|\le C\sqrt n.
	\end{equation*}
\end{lemma}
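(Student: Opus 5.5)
The proof splits into two parts: a construction showing $\mathcal J^t_{n,\Delta,r,=}\neq\emptyset$, and the three estimates for an arbitrary $J\in\mathcal J^t_{n,\Delta,r}$.

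\emph{Construction.} We take $X=\{x\}$ with $|X|=1\le t$ and set $J-x=T(n-1,r)$. The vertices of $T(n-1,r)$ have degree $n-1-\lceil (n-1)/r\rceil$ or $n-1-\lfloor (n-1)/r\rfloor$. For large $n$ these are below $\Delta$, since $\Delta\ge (r-1)n/r$ while the Tur\'an degrees are at most $(r-1)(n-1)/r+1$. The only possible exception is $\Delta=\lceil(r-1)n/r\rceil$, which can coincide with the largest Tur\'an degree; in that case the condition $\Delta(J)=\Delta$ already holds and we must only avoid exceeding it.

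We join $x$ to exactly $\Delta$ vertices, choosing them from the vertices of smallest degree in $T(n-1,r)$, i.e.\ the largest parts. Each neighbour then has degree at most its Tur\'an degree plus one, and a short computation shows this is at most $\Delta$ whenever $\Delta\ge\lceil(r-1)n/r\rceil$. Such a neighbour set exists because $\Delta\le n-1$. The vertex $x$ attains degree $\Delta$, so $\Delta(J)=\Delta$ and $J\in\mathcal J^t_{n,\Delta,r,=}$.

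\textbf{Main obstacle.} The boundary case $\Delta=\lceil (r-1)n/r\rceil$ requires the careful arithmetic here. Written $n-1=kr+\ell$, the vertices in the larger parts have degree $n-1-\lceil (n-1)/r\rceil$, and adding one gives $n-\lceil(n-1)/r\rceil$. This must be compared with $\lceil (r-1)n/r\rceil=n-\lfloor n/r\rfloor$. If that comparison fails by one in some residue class, I would instead join $x$ only to vertices of the largest parts, together with vertices not already at degree $\Delta$. Alternatively, I would let $x$ take a whole part's role: delete a vertex from a largest part and have $x$ copy its neighbourhood, adjusted by a single edge.

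\emph{Estimates.} Fix any $J\in\mathcal J^t_{n,\Delta,r}$ with exceptional set $X$, $|X|\le t$. We relabel $V(J)$ so that $J-X=T(n-|X|,r)$ and embed $T(n-|X|,r)$ into $T(n,r)$ by adding the $|X|$ vertices to the parts so as to restore balance. The two graphs differ only in edges incident to $X$ (at most $t(n-1)$ per graph) together with no others, because $T(n-|X|,r)$ is an induced subgraph of $T(n,r)$ under this placement. Hence $d_{\mathrm{edit}}(J,T(n,r))\le 2tn$, and $|e(J)-t(n,r)|\le 2tn$ follows trivially.

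For the spectral estimate we use that deleting or adding a set $E'$ of edges changes $\rho$ by at most $\rho(E')\le\sqrt{2|E'|}$. This holds since $\rho(A)-\rho(B)\le\|A-B\|_2\le\|A-B\|_F$, by Weyl's inequality or by the Rayleigh quotient. Here $A-B$ is supported on the symmetric difference, of size at most $2tn$, so $|\rho(J)-\rho(T(n,r))|\le\sqrt{4tn}$. Taking $C=4t+2$ (depending on $r,t$ only) completes the plan.
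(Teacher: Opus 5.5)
Your route is the paper's: one exceptional vertex $x$ attached to $T(n-1,r)$ for nonemptiness, and Weyl plus the Frobenius norm for the estimates. The estimate half is correct. Your remark that $T(n-|X|,r)$ embeds as an induced subgraph of $T(n,r)$, so that only edges at $X$ change and $d_{\mathrm{edit}}\le 2t(n-1)$, is a slightly cleaner justification than the paper's.

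The nonemptiness half, however, is not actually proved. You write that ``a short computation shows'' the chosen neighbours stay at degree at most $\Delta$, and then leave the boundary case conditional (``if that comparison fails \ldots\ I would instead \ldots''). The comparison you single out never fails, since $\lceil (n-1)/r\rceil\ge\lfloor n/r\rfloor$ for all $n$ and $r\ge2$. What is really needed, and missing, is a count: $T(n-1,r)$ must contain at least $\Delta$ vertices of degree at most $\Delta-1$. Your stated reason, ``such a neighbour set exists because $\Delta\le n-1$'', does not give this. In the tight case some vertices of $T(n-1,r)$ already have degree $\Delta$ and cannot be used.

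To close it, write $n=ar+b$ with $0\le b<r$, so that $\lceil (r-1)n/r\rceil=n-a$.
\begin{itemize}
\item If $b\ge1$, the smallest part of $T(n-1,r)$ has size $a$, so every vertex has degree at most $n-1-a\le\Delta-1$.
\item If $b=0$ and $\Delta\ge n-a+1$, every vertex has degree at most $n-a\le\Delta-1$.
\end{itemize}
In both of these cases any $\Delta$ vertices may be joined to $x$.

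The remaining case is $b=0$, $\Delta=n-a$. Here $T(n-1,r)$ has $r-1$ parts of size $a$, whose vertices have degree $\Delta-1$, and one part of size $a-1$, whose vertices already have degree $\Delta$. The eligible vertices number exactly $(r-1)a=\Delta$, so $x$ must be joined to precisely the vertices outside the small part.

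Your smallest-degree-first rule does select exactly this set, so the construction itself is right. But the exact count $(r-1)a=\Delta$ is the point your proposal needed to establish, and the fallback constructions you mention are unnecessary.
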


\begin{proof}
	Let $X$ be the independent set in the definition of $J$.  Every
	$J\in\mathcal{J}^t_{n,\Delta,r}$ differs from a labeled copy of
	$T(n,r)$ in $O_{r,t}(n)$ adjacencies: the changes involve the vertices of
	$X$ and $O_t(1)$ vertices in the remaining Tur\'an graph.
	Hence the displayed estimates hold for every $J$.  If two graphs differ in
	$O_{r,t}(n)$ edges, the Frobenius norm of the difference of their adjacency
	matrices is $O_{r,t}(\sqrt n)$; Weyl's inequality gives the spectral estimate.
	
	To see that $\mathcal{J}^t_{n,\Delta,r,=}$ is nonempty, take a vertex $x$
	and the graph $T(n-1,r)$.  Join $x$ to exactly $\Delta$ vertices of
	$T(n-1,r)$ having degree at most $\Delta-1$.  This is possible throughout
	the stated range.  If $\Delta>\lceil(r-1)n/r\rceil$, every vertex of
	$T(n-1,r)$ has degree at most $\Delta-1$ for large $n$.  If
	$\Delta=\lceil(r-1)n/r\rceil$, write $n=ar+b$ with $0\le b<r$.  When
	$b>0$, every vertex of $T(n-1,r)$ has degree at most $\Delta-1$; when
	$b=0$, the vertices outside the unique part of size $a-1$ number
	$(r-1)a=\Delta$ and have degree $\Delta-1$.  The resulting graph has
	maximum degree $\Delta$ and lies in $\mathcal{J}^t_{n,\Delta,r,=}$.
\end{proof}

\begin{proposition}\label{pro:ordinary-regime-reduction}
	Let $F$ be fixed with $\chi(F)=r+1\ge3$ and $a(F)>1$.  Uniformly for
	$\left\lceil\frac{r-1}{r}n\right\rceil\le\Delta\le n-1$, the following hold
	as $n\to\infty$.
	\begin{itemize}
	\item[\textnormal{(i)}] There exists an $F$-free graph
	$J_\Delta\in\mathcal J^{a(F)-1}_{n,\Delta,r,=}$ such that
	$d_{\mathrm{edit}}(J_\Delta,T(n,r))=O_F(n)$, and consequently
	\begin{equation*}
		e(J_\Delta)=t(n,r)+O_F(n),\quad
		\rho(J_\Delta)=\rho(T(n,r))+O_F(\sqrt n).
	\end{equation*}
	\item[\textnormal{(ii)}] The exact-layer extremal values satisfy
	\begin{equation*}
		\ex_F(n,\Delta)=t(n,r)+o(n^2),\quad \spex_F(n,\Delta)=\rho(T(n,r))+o(n).
	\end{equation*}
	\item[\textnormal{(iii)}] Every edge-extremal graph and every spectral-extremal
	graph in $\mathfrak G_{n,\Delta}(F)$ is $o(n^2)$-close in edit distance to
	$T(n,r)$.
	\end{itemize}
\end{proposition}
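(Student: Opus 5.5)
The plan is to combine the explicit constructions of \cref{lem:J-scale-main} and \cref{lem:J-free-main} for the lower bounds with the classical unconstrained extremal and stability results for the upper bounds and the structure.

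For (i), put $t=a(F)-1\ge1$. By \cref{lem:J-scale-main}, for all sufficiently large $n$ and uniformly in $\lceil(r-1)n/r\rceil\le\Delta\le n-1$, the family $\mathcal J^t_{n,\Delta,r,=}$ is nonempty. Choose any member $J_\Delta$. \cref{lem:J-free-main} shows that $J_\Delta$ is $F$-free. The estimates in \cref{lem:J-scale-main} then give $d_{\mathrm{edit}}(J_\Delta,T(n,r))\le Cn$, $e(J_\Delta)=t(n,r)+O(n)$ and $\rho(J_\Delta)=\rho(T(n,r))+O(\sqrt n)$, with $C=C(r,t)$ depending only on $F$. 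In particular $J_\Delta\in\mathfrak G_{n,\Delta}(F)$.

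For (ii), the lower bounds come from (i): $\ex_F(n,\Delta)\ge t(n,r)-O(n)$ and $\spex_F(n,\Delta)\ge\rho(T(n,r))-O(\sqrt n)$. For the upper bounds, note that every $G\in\mathfrak G_{n,\Delta}(F)$ is in particular $F$-free. The Erd\H{o}s--Stone--Simonovits theorem gives $e(G)\le\ex(n,F)=t(n,r)+o(n^2)$. For the spectral bound I will apply Nikiforov's spectral Erd\H{o}s--Stone--Bollob\'as theorem, $\rho(G)\le(1-1/r+o(1))n$, together with $\rho(T(n,r))=(1-1/r)n+O(1)$. If I want to stay within the tools stated in the paper, \cref{thm:spec-erdos-transfer} with $\mathcal L=\{F\}$ gives the same bound, since $s_{\mathcal L}<1$. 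None of these bounds depends on $\Delta$, so uniformity in $\Delta$ is automatic.

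For (iii), fix $\varepsilon>0$. Let $G$ be edge-extremal in $\mathfrak G_{n,\Delta}(F)$. By (i), $e(G)\ge e(J_\Delta)\ge t(n,r)-Cn\ge t(n,r)-\delta n^2$ for large $n$, where $\delta=\delta(\varepsilon,F)$ comes from \cref{thm:ES-stability}. That theorem then gives $d_{\mathrm{edit}}(G,T(n,r))\le\varepsilon n^2$. If $G$ is spectral-extremal, then $\rho(G)\ge\rho(J_\Delta)\ge(1-1/r)n-O(\sqrt n)\ge(1-1/r-\delta)n$ for large $n$, and \cref{lem:spectral-stability-1} gives the same conclusion. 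Because $\varepsilon$ is arbitrary and $n_0$ is independent of $\Delta$, the closeness is $o(n^2)$ uniformly in $\Delta$.

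The only substantive ingredient is the nonemptiness of the exact layer $\mathcal J^t_{n,\Delta,r,=}$ at the boundary value $\Delta=\lceil(r-1)n/r\rceil$, and \cref{lem:J-scale-main} already handles it. The remaining care is bookkeeping: $o(\cdot)$ terms must be taken uniformly over $\Delta$, and $\rho(T(n,r))$ must be compared with $(1-1/r)n$ up to $O(1)$ when feeding the spectral bound into \cref{lem:spectral-stability-1}.
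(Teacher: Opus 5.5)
Your proposal is correct and follows the paper's proof essentially step for step. Part (i) comes from \cref{lem:J-free-main} and \cref{lem:J-scale-main}; part (ii) combines those lower bounds with the $\Delta$-independent unconstrained edge and spectral Erd\H{o}s--Stone--Simonovits upper bounds; part (iii) follows from \cref{thm:ES-stability} and spectral stability. The only differences are cosmetic: you cite \cref{lem:spectral-stability-1} (and optionally \cref{thm:spec-erdos-transfer} with $\mathcal L=\{F\}$) where the paper cites Nikiforov's spectral stability form of the Erd\H{o}s--Stone--Bollob\'as theorem.
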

\begin{proof}
	\textnormal{(i)}: By \cref{lem:J-free-main} and \cref{lem:J-scale-main}, we get the results easily.
	
	\textnormal{(ii)}:  Part (i) gives the lower bounds
	$\ex_F(n,\Delta)\ge t(n,r)-O_F(n)$ and
	$\spex_F(n,\Delta)\ge\rho(T(n,r))-O_F(\sqrt n)$.
	Since the exact maximum-degree layer is a subfamily of all $n$-vertex
	$F$-free graphs, the ordinary edge and spectral Erd\H{o}s--Stone--Simonovits
	theorems give $\ex_F(n,\Delta)\le t(n,r)+o(n^2)$ and
	$\spex_F(n,\Delta)\le\rho(T(n,r))+o(n)$, uniformly in $\Delta$.
	This proves (ii).

	\textnormal{(iii)}: Let $G$ be edge-extremal.  By part (i),
	$e(G)\ge t(n,r)-O_F(n)$.  Given $\varepsilon>0$, this is at least
	$t(n,r)-\delta n^2$ for all sufficiently large $n$, where $\delta$ is
	the constant in \Cref{thm:ES-stability}.  Hence
	$d_{\mathrm{edit}}(G,T(n,r))\le\varepsilon n^2$; since $\varepsilon$ is
	arbitrary, the edge-extremal assertion follows.
	If $G$ is spectral-extremal, then
	$\rho(G)\ge\rho(T(n,r))-O_F(\sqrt n)=\left(1-\frac1r-o(1)\right)n$.
	The standard spectral stability form of the Erd\H{o}s--Stone--Bollob\'as
	theorem (see \cite{Nikiforov-ESS,Nikiforov-1}) therefore gives
	$d_{\mathrm{edit}}(G,T(n,r))=o(n^2)$.
\end{proof}

We now turn to the case $a(F)=1$.  The key input is local: if $v$ is a
boundary vertex of $F$, then an $F$-free graph cannot contain $F-v$ inside the
neighborhood of any vertex.  Combined with the maximum-degree defect identity,
this local exclusion forces near-extremal graphs to be close to
$S_{n,\Delta}^{(r)}$.

\begin{fact}\label{fact:boundary-exclusion-main}
	Let $F$ be a graph with $a(F)=1$ and $\chi(F)=r+1\geq 3$.  If $G$ is $F$-free and $v\in V(G)$, then $G[N_G(v)]$ is $\partial_{c} F$-free.
\end{fact}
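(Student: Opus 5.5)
The plan is a direct contradiction argument that embeds a copy of $F$ through the vertex $v$. Suppose $G$ is $F$-free, $v\in V(G)$, and $G[N_G(v)]$ contains a copy of some member of $\partial_cF$. Since $a(F)=1$, \cref{def:color-boundary-main} says this member has the form $F-w$ for a vertex $w\in V(F)$ with $\chi(F-w)=r$.

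Fix an embedding $\phi$ of $F-w$ into $G[N_G(v)]$. Extend it to $V(F)$ by setting $\phi(w)=v$. This extension is injective, because $v\notin N_G(v)$ and the image of $F-w$ lies inside $N_G(v)$.

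It remains to check that every edge of $F$ maps to an edge of $G$. An edge not incident to $w$ is an edge of $F-w$, so its image is an edge of $G$ because $\phi$ is an embedding. An edge $wx$ with $x\ne w$ maps to $v\phi(x)$, which is an edge of $G$ since $\phi(x)\in N_G(v)$. Hence $\phi$ is an embedding of $F$ into $G$, contradicting that $G$ is $F$-free.

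I do not expect any real obstacle. The only point needing care is that, when $a(F)=1$, members of $\partial_cF$ are exactly vertex-deleted subgraphs $F-w$. That identification is recorded in \cref{def:color-boundary-main}: a single vertex is trivially an independent set, and $\chi(F-w)\le r$ together with $\chi(F)=r+1$ forces $\chi(F-w)=r$.
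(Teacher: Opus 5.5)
Your proof is correct and follows the same route as the paper's: take a copy of a member $F-w$ of $\partial_cF$ inside $G[N_G(v)]$ and send the deleted vertex $w$ to $v$, which is adjacent to every vertex of the copy, giving a copy of $F$. Your explicit checks of injectivity ($v\notin N_G(v)$) and of edge preservation spell out what the paper states in one line, where it notes that extra edges from $v$ are harmless because containment is not induced.
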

\begin{proof}
	If $G[N_G(v)]$ contained a member $F-w$ of $\partial_cF$, then adding the vertex $v$ would give a copy of $F$ in $G$.  The possible extra edges from $v$ to this copy do not matter, since containment is not induced.
\end{proof}

\begin{lemma}\label{lem:edge-defect-decomp}
	Let $G$ be an $n$-vertex graph with $\Delta(G)=\Delta$, let $u$ be a maximum-degree vertex, and use the maximum-degree partition \eqref{eq:notation-1}.  Put $I:=e_G(A)$ and $M:=q\Delta-e_G(A,B)$,
	where $M$ is the number of missing edges between $A$ and $B$.  Then
	\begin{equation*}
		e(S_{n,\Delta}^{(r)})-e(G)
		=\bigl(t(\Delta,r-1)-e_G(B)\bigr)+(M-I).
	\end{equation*}
	Moreover $M\ge2I$, and hence $M+I\le 3(M-I)$.
\end{lemma}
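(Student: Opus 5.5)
The plan is to prove the identity by direct edge counting over the partition $V(G)=A\cup B$ from \eqref{eq:notation-1}, and then to read off the inequality $M\ge 2I$ from \cref{lem:root-defect-main}. Every edge of $G$ lies in exactly one of $G[A]$, $G[B]$, or the bipartite graph between $A$ and $B$. This gives
\begin{equation*}
	e(G)=e_G(A)+e_G(B)+e_G(A,B)=I+e_G(B)+(q\Delta-M).
\end{equation*}
On the other side, \cref{thm:mainresult-1} (i) already gives $e(S_{n,\Delta}^{(r)})=q\Delta+t(\Delta,r-1)$. This also follows directly from the structure $qK_1\vee T(\Delta,r-1)$: the part of size $q$ is independent, it is completely joined to the $\Delta$ other vertices, and those vertices span $T(\Delta,r-1)$.

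Subtracting the two counts, the $q\Delta$ terms cancel, and we obtain
\begin{equation*}
	e(S_{n,\Delta}^{(r)})-e(G)=\bigl(t(\Delta,r-1)-e_G(B)\bigr)+M-I.
\end{equation*}
This is exactly the claimed identity. The only point to check is that $q=n-\Delta=|A|$ and $|B|=\Delta$ match the part sizes of $S_{n,\Delta}^{(r)}$, and they do by construction.

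For the second assertion, the quantities $I$ and $M$ defined here are the same as in \cref{lem:root-defect-main}. That lemma gives $M\ge 2I$ from the degree bound $d_A(a)+d_B(a)\le\Delta=|B|$ for each $a\in A$, summed over $A$. It then follows that $M+I=(M-I)+2I\le 3(M-I)$.

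I do not expect a real obstacle here. The only care needed is bookkeeping: each edge must be counted in exactly one of the three classes, so that $e_G(A,B)$ is not double counted.
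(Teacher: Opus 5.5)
Your proposal is correct and follows the paper's proof: you split $e(G)=e_G(B)+e_G(A,B)+e_G(A)$, compare it with $e(S_{n,\Delta}^{(r)})=q\Delta+t(\Delta,r-1)$, and take $M\ge 2I$ and $M+I\le 3(M-I)$ directly from \cref{lem:root-defect-main}.
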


\begin{proof}
	The identity follows from
	\begin{equation*}
		e(S_{n,\Delta}^{(r)})=q\Delta+t(\Delta,r-1)
	\end{equation*}
	and
	\begin{equation*}
		e(G)=e_G(B)+e_G(A,B)+e_G(A)=e_G(B)+(q\Delta-M)+I.
	\end{equation*}
	The inequalities $M\ge2I$ and $M+I\le 3(M-I)$ are exactly \cref{lem:root-defect-main}.
\end{proof}

We next prove the spectral $S_{n,\Delta}^{(r)}$-model.  The Erd\H{o}s--Frankl--R\"odl reduction used here has been recorded in \Cref{lem:EFR-main}.  The main point is to compare the Perron root with the clique-count polynomial of $S_{n,\Delta}^{(r)}$ and thereby recover a small edge defect.
\begin{lemma}\label{lem:local-cliques-main}
	Let $\mathcal F$ be a fixed finite family of graphs with $\min_{F\in\mathcal F}\chi(F)=r+1\ge2$. For each fixed $s\ge1$, every $\mathcal F$-free graph $H$ on $n$ vertices satisfies $k_s(H)\le k_s(T(n,r))+o(n^s)$,	where the error term depends only on $\mathcal{F}$ and $s$.
\end{lemma}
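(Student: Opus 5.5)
The plan is to combine the removal lemma (\cref{lem:EFR-main}) with Zykov's clique bound (\cref{lem:zykov-clique}). Fix $s\ge1$ and $\mu>0$. For $s\le r$, the idea is to reduce to a $K_{r+1}$-free graph and then compare with $T(n,r)$. For $s\ge r+1$, the bound $k_s(T(n,r))=0$ holds and the claim is exactly the consequence already recorded in \cref{lem:EFR-main}, namely $k_s(H)=o(n^s)$ uniformly.

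For $1\le s\le r$, the steps are as follows.

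\begin{enumerate}[leftmargin=2em]
\item If $r=1$, the family contains a graph of chromatic number $2$. The removal lemma then still gives a deletion set of $\mu n^2$ edges making $H$ $K_2$-free. The case $s=1$ is trivial anyway, since $k_1(H)=n=k_1(T(n,r))$. So assume $s\ge2$.
\item Apply \cref{lem:EFR-main} with parameter $\mu$. For $n\ge n_0(\mu,\mathcal F)$, we obtain an edge set $R$ with $|R|\le\mu n^2$ such that $H':=H-R$ is $K_{r+1}$-free.
\item Every $s$-clique of $H$ that is not an $s$-clique of $H'$ contains an edge of $R$. Hence
\[
k_s(H)\le k_s(H')+|R|\binom{n-2}{s-2}\le k_s(H')+\mu n^s.
\]
\item By \cref{lem:zykov-clique}, $k_s(H')\le k_s(T(n,r))$.
\end{enumerate}

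Together these give $k_s(H)\le k_s(T(n,r))+\mu n^s$ for all $n\ge n_0(\mu,\mathcal F)$. Since $\mu$ was arbitrary, the error is $o(n^s)$ with dependence only on $\mathcal F$ and $s$.

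There is no serious obstacle here; the only point needing care is uniformity. The threshold $n_0$ from the removal lemma depends only on $\mu$ and $\mathcal F$, and the counting bound $|R|\binom{n-2}{s-2}$ depends only on $s$. This is exactly what the statement's ``depends only on $\mathcal F$ and $s$'' requires. The degenerate case $r+1=2$ in the hypothesis $\min\chi\ge2$ is covered by the same argument, with the removal lemma applied to the fixed family.
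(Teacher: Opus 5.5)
Your proposal is correct and follows essentially the same route as the paper. You use the removal lemma (\cref{lem:EFR-main}) to reach a $K_{r+1}$-free graph while losing at most $\mu n^2\binom{n-2}{s-2}\le\mu n^s$ cliques, then apply Zykov's bound for $2\le s\le r$, and invoke the removal-lemma consequence $k_s(H)=o(n^s)$ for $s\ge r+1$. Your aside on $r=1$ is harmless but unnecessary, since the range $2\le s\le r$ is then empty. The paper's closing remark about orders $n<n_0$ is already covered by your ``$\mu$ arbitrary'' formulation.
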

\begin{proof}
	If $s=1$, the assertion is immediate.  Suppose first that $2\le s\le r$. Apply \Cref{lem:EFR-main} with the family $\mathcal F$ to delete $o(n^2)$ edges and make $H$ $K_{r+1}$-free.  This deletion destroys at most	$o(n^s)$ copies of $K_s$, since each deleted edge lies in at most	$n^{s-2}$ copies of $K_s$.  \Cref{lem:zykov-clique} applied to the resulting $K_{r+1}$-free graph gives $k_s(H)\le k_s(T(n,r))+o(n^s)$.
	
	If $s\ge r+1$, then after the same deletion the resulting graph has no copy of $K_s$.  Hence all copies of $K_s$ in $H$ are destroyed by the	deleted edges, and therefore $k_s(H)=o(n^s)=k_s(T(n,r))+o(n^s)$,	because $k_s(T(n,r))=0$ for $s\ge r+1$.
	
	For uniformity, fix $\eta>0$ and choose $n_0=n_0(\mathcal F,s,\eta)$ so that the preceding estimate has error at most $\eta n^s$ for every
	$n\ge n_0$.  The orders $n<n_0$ contribute at most $O_{\mathcal F,s,\eta}(1)$, 	which is $o(n^s)$ as $n\to\infty$.  This completes the proof.
\end{proof}

\begin{lemma}\label{lem:general-clique-dom-main}
	Let $F$ be fixed with $\chi(F)=r+1\geq 3$ and $a(F)=1$. Let $G$ be an $F$-free graph with $\Delta(G)=\Delta$.  For each fixed	$3\le s\le r$, $k_s(G)\le k_s(S_{n,\Delta}^{(r)})+o(n^s)$.	For every fixed $s\ge r+1$, one has $k_s(G)=o(n^s)$.
\end{lemma}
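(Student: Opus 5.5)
We follow the clique-counting scheme from the proof of \cref{thm:mainresult-1}(ii). The exact Zykov input is replaced by the approximate bound of \cref{lem:local-cliques-main}, applied to the local forbidden family $\partial_cF$.

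The case $s\ge r+1$ is immediate. $F$ is a fixed graph with $\chi(F)=r+1$, so \cref{lem:EFR-main} (equivalently, the case $s\ge r+1$ of \cref{lem:local-cliques-main} with $\mathcal F=\{F\}$) gives $k_s(G)=o(n^s)$ uniformly over all $F$-free $n$-vertex graphs. In particular this holds in every degree layer.

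Now fix $3\le s\le r$. Choose $u$ with $d_G(u)=\Delta$ and use the partition \eqref{eq:notation-1}, with $B=N_G(u)$ and $A=V(G)\setminus B$, $|A|=q$. Every $s$-clique of $G$ either lies in $B$ or contains a vertex $a\in A$. In the latter case, the remaining $s-1$ vertices form an $(s-1)$-clique in $G[N_G(a)]$. As in \eqref{eq:clique-rooted-ineq-main},
\begin{equation*}
k_s(G)\le k_s(G[B])+\sum_{a\in A}k_{s-1}(G[N_G(a)]).
\end{equation*}
By \cref{fact:boundary-exclusion-main}, every neighbourhood graph $G[N_G(v)]$ is $\partial_cF$-free. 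Since $a(F)=1$, the family $\partial_cF$ is a nonempty finite family, and each member has chromatic number exactly $r$. So \cref{lem:local-cliques-main} applies with $r-1$ in place of $r$; when $r=2$ we use $T(m,1)=\overline{K_m}$, and we must check that the lemma's hypothesis $\min\chi=r\ge2$ still holds. This gives
\begin{equation*}
k_s(G[B])\le k_s(T(\Delta,r-1))+o(\Delta^s),\qquad k_{s-1}(G[N_G(a)])\le k_{s-1}(T(d_G(a),r-1))+o(d_G(a)^{s-1}).
\end{equation*}
The error terms are uniform because \cref{lem:local-cliques-main} states its error as depending only on the family and $s$, and because it is monotone: for orders $m\le n$, an error bounded by $\eta m^{s-1}+O_\eta(1)$ is at most $\eta n^{s-1}+O_\eta(1)$.

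Next we use monotonicity of $k_{s-1}(T(m,r-1))$ in $m$ together with $d_G(a)\le\Delta$. Summing over the $q\le n$ vertices of $A$ then gives
\begin{equation*}
k_s(G)\le k_s(T(\Delta,r-1))+q\,k_{s-1}(T(\Delta,r-1))+o(n^s)+q\cdot o(n^{s-1}).
\end{equation*}
The main term is exactly $k_s(S_{n,\Delta}^{(r)})$ by the counting remark in the proof of \cref{thm:mainresult-1}, and the error is $o(n^s)$. This proves the claim.

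The only delicate step is the uniformity of the $o(\cdot)$ terms over the $q$ neighbourhood graphs, whose orders vary and may be small. We handle it by fixing $\eta>0$ and bounding each local error by $\eta n^{s-1}+C_\eta$. The total error is then at most $\eta n^s+C_\eta n$, which is below $2\eta n^s$ for large $n$ because $s\ge3$.
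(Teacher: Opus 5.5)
Your proposal is correct and follows the paper's proof essentially step by step. The case $s\ge r+1$ comes from the removal lemma. For $3\le s\le r$ you use the root decomposition at a maximum-degree vertex, local $\partial_cF$-exclusion, and \cref{lem:local-cliques-main} with parameter $r-1$ applied to $G[B]$ and to each $G[N_G(a)]$; your $\eta n^{s-1}+C_\eta$ bookkeeping just makes explicit the paper's ``uniformly in $a$''. Your $r=2$ caveat is unnecessary, since for $r=2$ the range $3\le s\le r$ is empty, as the paper notes.
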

\begin{proof}
	The assertion for $s\ge r+1$ follows from \Cref{lem:EFR-main}.
	If $r=2$, then the range $3\le s\le r$ is empty.  Hence assume
	$r\ge3$ and fix $3\le s\le r$.
	
	Choose a maximum-degree root $u$, and use notation \eqref{eq:notation-1}.
	By \cref{fact:boundary-exclusion-main}, $G[B]$ is $\partial_cF$-free.
	Since $a(F)=1$, we have $\min_{H \in\partial_cF}\chi(H)=r$. Thus, the uniform form of \Cref{lem:local-cliques-main}, applied to the	family $\partial_cF$ with parameter $r-1$, gives
	\begin{equation*}
		k_s(G[B])\le k_s(T(\Delta,r-1))+o(n^s).
	\end{equation*}
	
	Every $s$-clique not contained in $B$ has at least one vertex $a\in A$,
	and is counted at least once by exposing such a vertex and then choosing
	an $(s-1)$-clique in $G[N_G(a)]$.  Again by local boundary exclusion,
	$G[N_G(a)]$ is $\partial_cF$-free.  Since $d_G(a)\le\Delta$, the uniform
	form of \Cref{lem:local-cliques-main} gives
	\begin{equation*}
		k_{s-1}(G[N_G(a)])
		\le k_{s-1}(T(d_G(a),r-1))+o(n^{s-1})
		\le k_{s-1}(T(\Delta,r-1))+o(n^{s-1}),
	\end{equation*}
	uniformly in $a$.
	
	Summing over the $q$ vertices $a\in A$ gives
	\begin{equation*}
		k_s(G)-k_s(G[B])
		\le q\,k_{s-1}(T(\Delta,r-1))+o(n^s).
	\end{equation*}
	Together with the estimate for the cliques contained in $B$, this gives
	\begin{equation*}
		k_s(G)\le k_s(T(\Delta,r-1))+q\,k_{s-1}(T(\Delta,r-1))+o(n^s).
	\end{equation*}
	The first two terms are precisely $k_s(S_{n,\Delta}^{(r)})$, and hence
	\begin{equation*}
		k_s(G)\le k_s(S_{n,\Delta}^{(r)})+o(n^s).
	\end{equation*}
	This completes the proof.
\end{proof}

\begin{lemma}\label{lem:poly-stability-main}
	Let $F$ be a fixed graph with $\chi(F)=r+1\ge3$ and $a(F)=1$.  For every $\eta>0$ there are $\sigma>0$ and $n_0$ such that the following holds for all $n\ge n_0$ and all $\lceil(r-1)n/r\rceil\le\Delta\le n-1$.  If $G$ is $F$-free, $\Delta(G)=\Delta$ and $\rho(G)\ge\rho(S_{n,\Delta}^{(r)})-\sigma n$, then $e(S_{n,\Delta}^{(r)})-e(G)\le \eta n^2$.
\end{lemma}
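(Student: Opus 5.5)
The plan is to repeat the spectral argument of \cref{thm:mainresult-1}(ii) with error terms. The obstruction is that $G$ need not be $K_{r+1}$-free, so \Cref{lem:nik-main} cannot be applied to $G$ directly. Fix $\eta>0$ and a small $\mu>0$, to be chosen later in terms of $\eta$ and $r$. By \Cref{lem:EFR-main}, for $n$ large we can delete a set $R$ of at most $\mu n^2$ edges from $G$ so that $G':=G-R$ is $K_{r+1}$-free. Let $A(R)$ be the adjacency matrix of the graph formed by the edges of $R$. Its spectral radius is at most its Frobenius norm $\sqrt{2|R|}$, so
\begin{equation*}
\rho(G')\ge \rho(G)-\sqrt{2\mu}\,n\ge \rho(S_{n,\Delta}^{(r)})-(\sigma+\sqrt{2\mu})n .
\end{equation*}
Since $G'\subseteq G$, we have $k_2(G')\le e(G)$. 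For $3\le s\le r$, \Cref{lem:general-clique-dom-main} gives $k_s(G')\le k_s(G)\le k_s(S_{n,\Delta}^{(r)})+o(n^s)$, uniformly in $\Delta$.

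Write $S=S_{n,\Delta}^{(r)}$ and $\rho'=\rho(G')$. \Cref{lem:nik-main} applies to $G'$. Using $k_s(S)=e_s$ of the part sizes, we keep the $s=2$ term separate as in the equality argument of \cref{thm:mainresult-1}:
\begin{equation*}
P_S(\rho')\le -\bigl(e(S)-e(G)\bigr)\rho'^{\,r-2}+\sum_{s=3}^{r}(s-1)\,o(n^s)\rho'^{\,r-s}
= -\bigl(e(S)-e(G)\bigr)\rho'^{\,r-2}+o(n^r),
\end{equation*}
where we use $\rho'\le n$. When $r=2$ the sum is empty. If $e(G)\ge e(S)$ there is nothing to prove, so we may assume $e(S)-e(G)>0$.

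Next we need a lower bound on $P_S(\rho')$. We have $P_S(\rho(S))=0$. On $[0,n]$, each coefficient $(s-1)e_s$ is at most $(s-1)\binom{r}{s}n^s$, so $|P_S'(\lambda)|\le C_r n^{r-1}$ there. Also, if $\rho'\ge\rho(S)$ then $P_S(\rho')\ge0$ by \Cref{lem:multipartite-polynomial-main}. In either case,
\begin{equation*}
P_S(\rho')\ge -C_r(\sigma+\sqrt{2\mu})n^{r}.
\end{equation*}
We also need $\rho'$ to be of order $n$. Here $\rho(S)\ge 2e(S)/n\ge c_r n$, because $e(S)\ge t(n,r)-O(n)$ throughout the degree range; the minimum is attained near $\Delta=n-1$, where $e(S)\ge (n-1)+t(n-1,r-1)$. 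Hence $\rho'\ge c_r n/2$ once $\sigma$ and $\mu$ are small.

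Combining the two bounds on $P_S(\rho')$ gives
\begin{equation*}
e(S)-e(G)\le \frac{C_r(\sigma+\sqrt{2\mu})n^r+o(n^r)}{(c_rn/2)^{r-2}}\le C'_r\bigl(\sigma+\sqrt{\mu}\bigr)n^2+o(n^2).
\end{equation*}
We choose $\mu$ and then $\sigma$ so that $C'_r(\sigma+\sqrt\mu)\le\eta/2$, and then choose $n_0$ so that the $o(n^2)$ term is at most $\eta n^2/2$.

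The main obstacle is uniformity in $\Delta$. The $o(n^s)$ bounds from \Cref{lem:general-clique-dom-main} and the removal step must not depend on $\Delta$. This holds because \Cref{lem:local-cliques-main} and \Cref{lem:EFR-main} are uniform over all $\partial_cF$-free (respectively $F$-free) graphs on at most $n$ vertices. The derivative and lower bounds for $P_S$ use only $r$ and $n$.
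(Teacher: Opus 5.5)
Your argument is correct, but it handles the cliques of order larger than $r$ differently from the paper. The paper applies \Cref{lem:nik-main} to $G$ itself, using its actual clique number $\omega(G)\le |V(F)|-1$. It divides by $\rho^{\omega(G)-r}$ and absorbs the terms $k_s(G)\rho^{r-s}$ with $s>r$ using $k_s(G)=o(n^s)$ together with a lower bound $\rho=\Omega(n)$. That lower bound fails for $r=2$ when $q=n-\Delta$ is small, so the paper first disposes of the case $r=2$, $q<\gamma n$ by the trivial bound $e(S)=q\Delta<\gamma n^2$.

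You instead use the deletion form of \Cref{lem:EFR-main} and Weyl's inequality to pass to a $K_{r+1}$-free subgraph $G'$ with $\rho(G')\ge\rho(G)-\sqrt{2\mu}\,n$. Bounding $k_2(G')\le e(G)$ keeps the exact edge term of $G$, so \Cref{lem:nik-main} applies in its clean form with $\omega\le r$. This costs an extra parameter $\mu$, chosen before $\sigma$. In exchange, you never need a lower bound on the spectral radius when $r=2$, because there you divide by $\rho'^{\,0}=1$. So your route needs no case split.

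One correction: your justification of $\rho(S)\ge c_r n$ is wrong as written. At $\Delta=n-1$ one has $e(S)=(n-1)+t(n-1,r-1)$, which is smaller than $t(n,r)$ by $\Theta(n^2)$. So $e(S)\ge t(n,r)-O(n)$ fails across the degree range; $t(n,r)$ is the value near the other endpoint $\Delta=\lceil (r-1)n/r\rceil$. Moreover, for $r=2$ one has $\rho(S)=\sqrt{q\Delta}$, which is only about $\sqrt{n}$ when $\Delta=n-1$, so the claimed linear lower bound is false there.

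Neither point breaks the proof. For $r\ge3$ the bound $e(S)\ge t(\Delta,r-1)=\Omega_r(n^2)$ gives $\rho(S)\ge 2e(S)/n\ge c_r n$, as needed. For $r=2$ you should state explicitly that no lower bound on $\rho'$ is used, rather than asserting it for all $r$.
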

\begin{proof}
	
	Let $S=S_{n,\Delta}^{(r)}$, let $\rho=\rho(G)$, and put $q=n-\Delta$.  Choose a constant $\gamma>0$ with $\gamma<\eta/4$.  If $r=2$ and $q<\gamma n$, then $S=K_{q,\Delta}$ and
	\begin{equation*}
		e(S)-e(G)\le e(S)=q\Delta<\gamma n^2<\eta n^2.
	\end{equation*}
	It remains to consider the case in which either $r\ge3$, or $r=2$ and $q\ge\gamma n$.  In this range the comparison graph has spectral radius at least $c n$, for a constant $c>0$ depending only on $r$ and, in the bipartite case, on $\gamma$.  By taking $\sigma$ small we may assume $\rho\ge c n/2$.
	
	We apply \Cref{lem:nik-main} with the actual clique number $\omega(G)$.  Since $G$ is $F$-free, $\omega(G)\le |V(F)|-1$.  If $\omega(G)\le r$, the next display follows directly.  If $\omega(G)>r$, we divide the Nikiforov inequality by $\rho^{\omega(G)-r}$; the finitely many terms with $s\ge r+1$ contribute $o(n^r)$ by \Cref{lem:EFR-main} and the lower bound $\rho=\Omega(n)$.  Thus
	\begin{equation*}
		\rho^r
		\le \sum_{s=2}^{r}(s-1)k_s(G)\rho^{r-s}+o(n^r).
	\end{equation*}
	For $3\le s\le r$, \Cref{lem:general-clique-dom-main} gives $k_s(G)\le k_s(S)+o(n^s)$.  The $s=2$ term is kept exact.  With
	\begin{equation*}
		P_S(\lambda):=\lambda^r-\sum_{s=2}^{r}(s-1)k_s(S)\lambda^{r-s},
	\end{equation*}
	we obtain
	\begin{align*}
		P_S(\rho)=\rho^r-\sum_{s=2}^{r}(s-1)k_s(S)\rho^{r-s} \le (e(G)-e(S))\rho^{r-2}+o(n^r).
	\end{align*}
	Thus
	\begin{equation*}
		(e(S)-e(G))\rho^{r-2}\le -P_S(\rho)+o(n^r).
	\end{equation*}
	Since $\rho(G)\le \Delta\le n$ and $\rho(S)\le n$, both $\rho(G)$ and $\rho(S)$ are $O(n)$.  Also $k_s(S)=O_r(n^s)$, and therefore
	\begin{equation*}
		|P'_S(\lambda)|\le C_r n^{r-1}\quad\text{for }0\le\lambda\le C_r n.
	\end{equation*}
	Hence, whenever $\rho\ge\rho(S)-\sigma n$, the mean-value theorem gives $-P_S(\rho)\le C_r\sigma n^r$; if $\rho\ge\rho(S)$ this is immediate from the sign of $P_S$.  Combining this with $\rho\ge c n/2$ yields
	\begin{equation*}
		e(S)-e(G)\le C'(\sigma+o(1))n^2.
	\end{equation*}
	Choosing $\sigma$ sufficiently small and then $n$ sufficiently large completes the proof.
\end{proof}

\begin{proposition}\label{thm:edge-spectral-stability-main}
Let $F$ be a fixed graph with $\chi(F)=r+1\ge3$ and $a(F)=1$.  For every $\varepsilon>0$ there are $\theta, \sigma>0$ and $n_0$ satisfying the following.  Let $n\ge n_0$ and $\lceil (r-1)n/r\rceil\le \Delta\le n-1$.  Let $G$ be an $n$-vertex $F$-free graph with $\Delta(G)=\Delta$.
\begin{itemize}
	\item[\textnormal{(i)}] If $e(G)\ge e(S_{n,\Delta}^{(r)})-\theta n^2$, then $d_{\mathrm{edit}}(G,S_{n,\Delta}^{(r)})\le\varepsilon n^2$.
	\item[\textnormal{(ii)}] If $\rho(G)\ge \rho(S_{n,\Delta}^{(r)})-\sigma n$, then $d_{\mathrm{edit}}(G,S_{n,\Delta}^{(r)})\le\varepsilon n^2$.
\end{itemize}
\end{proposition}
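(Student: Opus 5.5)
Part (ii) follows from part (i) via \Cref{lem:poly-stability-main}: given $\varepsilon$, first obtain $\theta$ from (i), then apply \Cref{lem:poly-stability-main} with $\eta=\theta$ to get $\sigma$. With this $\sigma$, the hypothesis $\rho(G)\ge\rho(S_{n,\Delta}^{(r)})-\sigma n$ gives $e(G)\ge e(S_{n,\Delta}^{(r)})-\theta n^2$, and (i) then yields $d_{\mathrm{edit}}(G,S_{n,\Delta}^{(r)})\le\varepsilon n^2$. The work is therefore in (i).

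For (i), I use the maximum-degree partition \eqref{eq:notation-1} with root $u$, $B=N_G(u)$, $A=V(G)\setminus B$, $|A|=q$. By \cref{lem:edge-defect-decomp},
\begin{equation*}
	\theta n^2\ge e(S_{n,\Delta}^{(r)})-e(G)=\bigl(t(\Delta,r-1)-e_G(B)\bigr)+(M-I).
\end{equation*}
Since $M-I\ge I\ge0$, the first bracket is at least $-(M-I)$, but this alone does not bound it. To get an upper bound on $e_G(B)$, combine \cref{fact:boundary-exclusion-main}, which says $G[B]$ is $\partial_cF$-free with $\min\chi=r$, with Erd\H{o}s--Stone--Simonovits: $e_G(B)\le t(\Delta,r-1)+o(n^2)$ uniformly. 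Hence both $t(\Delta,r-1)-e_G(B)$ and $M-I$ are at most $\theta n^2+o(n^2)$, each being at least $-o(n^2)$ and $0$ respectively. By \cref{lem:root-defect-main}, $M+I\le3(M-I)=O(\theta)n^2+o(n^2)$. So $A$ spans few edges, and almost all $A$--$B$ edges are present.

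It remains to show $G[B]$ is close to $T(\Delta,r-1)$. Here I apply \Cref{thm:ES-stability} to $G[B]$ with the family $\partial_cF$ (chromatic threshold $r$, so the stable structure is $T(\Delta,r-1)$): $e_G(B)\ge t(\Delta,r-1)-2\theta n^2$ forces $d_{\mathrm{edit}}(G[B],T(\Delta,r-1))\le\varepsilon n^2/2$, provided $2\theta\le\delta(\varepsilon/2,\partial_cF)$. This requires $\Delta\ge n_0$, which holds since $\Delta\ge n/2$. The case $r=2$ needs separate treatment: $\partial_cF$ then contains a graph with at least one edge, hence with chromatic number $2$, so $e_G(B)=o(n^2)$ directly by K\H{o}v\'ari--S\'os--Tur\'an-type bounds, and $T(\Delta,1)$ is empty. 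Finally, the edit distance decomposes as
\begin{equation*}
	d_{\mathrm{edit}}(G,S_{n,\Delta}^{(r)})\le d_{\mathrm{edit}}(G[B],T(\Delta,r-1))+M+I,
\end{equation*}
using the identity bijection on $A$ and the optimal one on $B$. Choosing $\theta$ small, e.g. $\theta\le\varepsilon/12$ together with the stability constraint, and $n_0$ large gives (i).

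The main obstacle is uniformity in $\Delta$. The $o(n^2)$ terms from Erd\H{o}s--Stone--Simonovits and the $n_0$ from stability depend on the order $\Delta$ of $G[B]$, not on $n$. Since $\Delta\in[n/2,n]$, it suffices to take $n_0$ large enough that every $\Delta\ge n_0/2$ meets the stability threshold. The $\varepsilon n^2$ versus $\varepsilon\Delta^2$ scaling is harmless for the same reason.
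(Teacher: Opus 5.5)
Your proposal is correct and follows essentially the same route as the paper. The shared steps are: the maximum-degree partition with the defect identity of \Cref{lem:edge-defect-decomp}; the Erd\H{o}s--Stone--Simonovits upper bound for the $\partial_cF$-free graph $G[B]$, which caps $M-I$ and hence $M+I\le 3(M-I)$; Erd\H{o}s--Simonovits stability on $G[B]$ for $r\ge3$, with a direct $e_G(B)=o(n^2)$ bound when $r=2$; and (ii) deduced from (i) via \Cref{lem:poly-stability-main}.

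The only slip is a constant. The stability hypothesis is $e_G(B)\ge t(\Delta,r-1)-\delta\Delta^2$, so you need $2\theta n^2\le\delta\Delta^2$, i.e.\ $8\theta\le\delta$ using $\Delta\ge n/2$. This is the paper's condition $\theta\le\varrho\bigl(\frac{r-1}{r}\bigr)^2$, and you already flag the scaling as harmless.
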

\begin{proof}
	The argument below is uniform in the stated range of $\Delta$.
	
	(i): Fix $\varepsilon>0$.  Let $\gamma>0$ be chosen below.  Since every
	member of $\partial_cF$ has chromatic number $r$, the
	Erd\H{o}s--Stone--Simonovits theorem gives
	\[
	\ex(m,\partial_cF)=t(m,r-1)+o_F(m^2).
	\]
	Thus, after choosing $n_0$ sufficiently large, the error term is at most
	$\gamma m^2$ uniformly for every order $m$ arising below.  If $r\ge3$,
	\cref{thm:ES-stability} applied to the finite family $\partial_cF$ says
	that, for $\eta=\varepsilon/3$, there is $\varrho>0$ such that every
	$\partial_cF$-free graph $H$ on $m$ vertices with
	$e(H)\ge t(m,r-1)-\varrho m^2$ is within $\eta m^2$ edits of
	$T(m,r-1)$.
	
	Choose $\gamma,\theta>0$ sufficiently small.  In the case $r\ge3$, require
	\begin{equation*}
		\theta\le \varrho (\frac{r-1}{r})^2
		\quad\text{and}\quad
		3(\theta+\gamma)\le \varepsilon/3.
	\end{equation*}
	In the case $r=2$, require $3\theta+4\gamma\le \varepsilon$.  Take $n$
	large enough so that the relevant extremal estimates apply.
	
	Let $G$ be an $F$-free graph on $n$ vertices with $\Delta(G)=\Delta$ and
	$e(G)\ge e(S_{n,\Delta}^{(r)})-\theta n^2$.  Choose $u\in V(G)$ with
	$d_G(u)=\Delta$, and write $B:=N_G(u)$, $A:=V(G)\setminus B$.  Then
	$|B|=\Delta\ge \frac{r-1}{r}n$.  Since every copy of
	$F-z\in \partial_c F$ inside $G[B]$ would extend to a copy of $F$ by
	mapping the boundary vertex $z$ to $u$, the graph $G[B]$ is
	$\partial_c F$-free.
	Recall that $I:=e_G(A)$, and $M:=q\Delta-e_G(A,B)$.  Thus $M$ is the
	number of missing edges between $A$ and $B$.  Since
	$e(S_{n,\Delta}^{(r)})=t(\Delta,r-1)+q\Delta$, we have the exact identity
	\begin{equation*}
		e(S_{n,\Delta}^{(r)})-e(G)	= \bigl(t(\Delta,r-1)-e_G(B)\bigr)+(M-I).
	\end{equation*}
	Hence
	\begin{equation}\label{eq:stability-defect-short}
		t(\Delta,r-1)-e_G(B)+(M-I)
		\le \theta n^2 .
	\end{equation}
	
	By \cref{lem:root-defect-main}, we have 
	\begin{equation}\label{eq:M-plus-I-short}
		M-I\ge0
		\quad\text{and}\quad
		M+I\le 3(M-I).
	\end{equation}
	
	\textbf{Case 1.} $r\ge3$.  Since $G[B]$ is $\partial_cF$-free, the
	coarse boundary estimate above gives
	$e_G(B)\le t(\Delta,r-1)+\gamma\Delta^2
	\le t(\Delta,r-1)+\gamma n^2$.  Together with
	\eqref{eq:stability-defect-short}, this yields
	$M-I\le(\theta+\gamma)n^2$.  By \eqref{eq:M-plus-I-short},
	\begin{equation}\label{eq:AU-errors-short}
		M+I\le 3(\theta+\gamma)n^2\le \frac{\varepsilon}{3}n^2.
	\end{equation}
	On the other hand, since $M-I\ge0$, \eqref{eq:stability-defect-short}
	also gives $t(\Delta,r-1)-e_G(B)\le \theta n^2
	\le \varrho\Delta^2$, where the last inequality follows from
	$\Delta\ge \frac{r-1}{r} n$ and
	$\theta\le \varrho (\frac{r-1}{r})^2$.  Therefore
	$e_G(B)\ge t(\Delta,r-1)-\varrho\Delta^2$.
	By \cref{thm:ES-stability}, there is a partition $B=V_1\cup\cdots\cup V_{r-1}$ such that $G[B]$ differs from $T(\Delta,r-1)$ with parts
	$V_1,\ldots,V_{r-1}$ in at most
	$\frac{\varepsilon}{3}\Delta^2 \le \frac{\varepsilon}{3}n^2$ adjacencies.
	Combining this with \eqref{eq:AU-errors-short}, we can
	transform $G$ into the labeled copy of $S_{n,\Delta}^{(r)}$ with parts
	$A,V_1,\ldots,V_{r-1}$ using at most $\varepsilon n^2$ edits.
	
	\textbf{Case 2.} $r=2$.
	Then $T(\Delta,r-1)=T(\Delta,1)$ is empty.
	Since $\partial_cF$ is a fixed bipartite family and $G[B]$ is
	$\partial_cF$-free, the coarse boundary estimate gives
	$e_G(B)\le \gamma\Delta^2\le\gamma n^2$.
	Now \eqref{eq:stability-defect-short} gives
	$M-I\le \theta n^2+e_G(B)\le(\theta+\gamma)n^2$.  Thus, by
	\eqref{eq:M-plus-I-short}, $M+I\le 3(\theta+\gamma)n^2$.  To transform
	$G$ into the labeled copy of $S_{n,\Delta}^{(2)}$ with parts $A$ and $B$,
	we delete the edges inside $B$ and $A$, and add the
	missing edges between $A$ and $B$.  The number of edits is at most
	\begin{equation*}
		e_G(B)+I+M \le \gamma n^2+3(\theta+\gamma)n^2 =	(3\theta+4\gamma)n^2 \le \varepsilon n^2.
	\end{equation*}
	This completes the proof in both cases.
	
	(ii): Given $\varepsilon>0$, let $\theta_{\mathrm e}>0$ be the edge-defect threshold supplied by part (i).  Apply \cref{lem:poly-stability-main} with $\eta=\theta_{\mathrm e}$.  If $\rho(G)\ge\rho(S_{n,\Delta}^{(r)})-\sigma n$, then $e(G)\ge e(S_{n,\Delta}^{(r)})-\theta_{\mathrm e} n^2$.  Part (i) gives $d_{\mathrm{edit}}(G,S_{n,\Delta}^{(r)})\le\varepsilon n^2$, as required.
\end{proof}

\begin{proof}[Proof of \cref{thm:mainresult-2}]
	The two assertions are exactly those of \Cref{thm:edge-spectral-stability-main}.
\end{proof}

We record two consequences of the stability theorem.  The first is the maximum-degree Erd\H{o}s--Stone--Simonovits asymptotic stated in \cref{thm:mainresult-3}; the second is a finer boundary-scale discussion for $r=2$.  For $r\ge3$, the model $S_{n,\Delta}^{(r)}$ has $\Theta_r(n^2)$ edges and spectral radius $\Theta_r(n)$ uniformly in the high-degree range, so additive $o(n^2)$ and $o(n)$ estimates are equivalent to relative asymptotics.  For $r=2$, the model is $K_{n-\Delta,\Delta}$ and the natural scale depends on $n-\Delta$.

\begin{lemma}\label{lem:edit-weyl-main}
	For any two $n$-vertex graphs $G$ and $H$,
	\begin{equation*}
		|\rho(G)-\rho(H)|\le \|A(G)-A(H)\|_2
		\le \sqrt{2\,d_{\mathrm{edit}}(G,H)}.
	\end{equation*}
	In particular, if $d_{\mathrm{edit}}(G,H)=o(n^2)$, then $|\rho(G)-\rho(H)|=o(n)$.
\end{lemma}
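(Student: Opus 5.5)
The first inequality is Weyl's inequality, and the second comes from bounding the operator norm by the Frobenius norm and counting the entries of the difference matrix. We work with a fixed labeling. The edit distance is a minimum over bijections, so we choose a bijection $\phi:V(H)\to V(G)$ attaining $d_{\mathrm{edit}}(G,H)$. We then replace $H$ by its isomorphic copy $\phi(H)$ on the vertex set $V(G)$. This changes neither $\rho(H)$ nor the spectrum of $A(H)$, since relabeling conjugates the adjacency matrix by a permutation matrix. After this identification, $|E(G)\triangle E(H)|=d_{\mathrm{edit}}(G,H)$.

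\textbf{Step 1.} Both $A(G)$ and $A(H)$ are real symmetric, so Weyl's inequality gives $|\lambda_1(A(G))-\lambda_1(A(H))|\le\|A(G)-A(H)\|_2$. Here $\|\cdot\|_2$ is the spectral norm and $\lambda_1$ is the largest eigenvalue, which equals $\rho$ for adjacency matrices by Perron--Frobenius. Alternatively, one can use the Rayleigh quotient from the notation section. With $x$ a Perron unit vector of $G$,
\begin{equation*}
\rho(G)=x^TA(G)x=x^TA(H)x+x^T(A(G)-A(H))x\le\rho(H)+\|A(G)-A(H)\|_2.
\end{equation*}
Exchanging the roles of $G$ and $H$ gives the absolute value.

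\textbf{Step 2.} The spectral norm is at most the Frobenius norm: $\|D\|_2\le\|D\|_F$ for $D=A(G)-A(H)$. The entries of $D$ lie in $\{-1,0,1\}$. The nonzero entries are exactly the positions $(x,y)$ and $(y,x)$ with $xy\in E(G)\triangle E(H)$. Hence $\|D\|_F^2=2\,|E(G)\triangle E(H)|=2\,d_{\mathrm{edit}}(G,H)$, which gives the bound $\sqrt{2\,d_{\mathrm{edit}}(G,H)}$.

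\textbf{Step 3.} If $d_{\mathrm{edit}}(G,H)\le\varepsilon_n n^2$ with $\varepsilon_n\to0$, then $|\rho(G)-\rho(H)|\le\sqrt{2\varepsilon_n}\,n=o(n)$.

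No step is a genuine obstacle. The only point needing care is the relabeling in the first paragraph, because $d_{\mathrm{edit}}$ is defined via an optimal bijection rather than a fixed labeling. That is why we pass to the isomorphic copy $\phi(H)$ before comparing adjacency matrices.
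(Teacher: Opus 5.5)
Your proof is correct and follows the paper's argument essentially verbatim: relabel $H$ by a bijection attaining $d_{\mathrm{edit}}(G,H)$, apply Weyl's inequality, then bound the spectral norm by the Frobenius norm using $\|A(G)-A(H)\|_F^2=2\,d_{\mathrm{edit}}(G,H)$. The Rayleigh-quotient version you give for Step 1 is a valid self-contained substitute for citing Weyl's inequality.
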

\begin{proof}
	Relabel $H$ using a bijection realizing $d_{\mathrm{edit}}(G,H)$.  The first inequality is Weyl's inequality for real symmetric matrices.  The second follows from $\|M\|_2\le\|M\|_F$ for real matrices: changing one adjacency changes two symmetric entries of the adjacency matrix, so $\|A(G)-A(H)\|_F^2=2d_{\mathrm{edit}}(G,H)$.
\end{proof}

\begin{proof}[Proof of \cref{thm:mainresult-3}]
	The graph $S_{n,\Delta}^{(r)}$ is complete $r$-partite and is therefore $F$-free.  In the stated degree range it also has maximum degree exactly $\Delta$.  This gives the lower bounds
	\begin{equation*}
		\ex_F(n,\Delta)\ge e(S_{n,\Delta}^{(r)})
		\quad\text{and}\quad
		\spex_F(n,\Delta)\ge \rho(S_{n,\Delta}^{(r)}).
	\end{equation*}
	
	For the edge upper bound, let $\eta>0$ and choose the edge-stability threshold in \Cref{thm:edge-spectral-stability-main} (i) with edit-distance error $\eta n^2$.  If $G$ is edge-extremal in $\mathfrak G_{n,\Delta}(F)$, then the lower bound above implies
	$e(G)\ge e(S_{n,\Delta}^{(r)})$.  The stability theorem gives
	$d_{\mathrm{edit}}(G,S_{n,\Delta}^{(r)})\le\eta n^2$, and hence
	$e(G)\le e(S_{n,\Delta}^{(r)})+\eta n^2$.  Since $\eta$ is arbitrary, the edge formula follows.
	
	For the spectral upper bound, let $\eta>0$ and choose the spectral-stability theorem with edit-distance error at most $\eta^2 n^2/2$.  If $G$ is spectral-extremal, then $\rho(G)\ge\rho(S_{n,\Delta}^{(r)})$, so \Cref{thm:edge-spectral-stability-main} (ii) gives $d_{\mathrm{edit}}(G,S_{n,\Delta}^{(r)})\le\eta^2 n^2/2$.  By \Cref{lem:edit-weyl-main}, $\rho(G)\le \rho(S_{n,\Delta}^{(r)})+\eta n$.	Again $\eta$ is arbitrary, which proves the spectral formula uniformly in the stated range.
\end{proof}

The preceding results are coarse statements at the $n^2$ edge scale and the $n$ spectral scale.  We now record the boundary exponent needed only for the
following finer discussion.
\begin{definition}\label{def:boundary-exponent}
	Let $F$ be a graph with $\chi(F)=r+1\ge3$.  Define
	\begin{equation*}
		t_F:=\inf\left\{t\in[-1,1):\,\limsup_{n\to\infty}\frac{\ex(n,\mathcal M(\partial_c F))}{n^{1+t}}<+\infty\right\}.
	\end{equation*}
	Then $t_F\in\{-1\}\cup[0,1)$, where the gap $(-1,0)$ is excluded by \cite[Theorem 2.36]{Furedi-Simonovits}.  We call $t_F$ the critical boundary exponent.  For each fixed boundary family used below, fix an admissible exponent $s_F\in[\max\{0,t_F\},1)$ and a constant $\Lambda_F>0$ such that
	\begin{equation*}
		\ex(n,\mathcal M(\partial_c F))\le\Lambda_Fn^{1+s_F}\quad(n\ge1).
	\end{equation*}
	If the endpoint estimate is available, one may take $s_F=\max\{0,t_F\}$; otherwise one fixes any admissible $s_F>\max\{0,t_F\}$.  No optimality of the working exponent is assumed.
\end{definition}

\begin{remark}\label{rem:boundary-range-main}
	Write $q=n-\Delta$.  The exponent $s_F$ is already forced at the edge level
	in the sharpness discussion of Huo--Yuan \cite{Huo-Yuan}: inside one part
	of the Tur\'an core, one may insert a graph avoiding
	$\mathcal M(\partial_cF)$, and the number of insertable edges satisfies
	\begin{equation*}
		\ex(\Delta,\mathcal M(\partial_cF))=O_F(\Delta^{1+s_F}).
	\end{equation*}
	So the boundary perturbation has size $n^{1+s_F}$, while the rooted part of
	$S_{n,\Delta}^{(r)}$ contributes $q\Delta$ edges.  These two edge scales
	balance when $q\asymp n^{s_F}$.
	
	The same transition is visible in the spectral problem.  First suppose
	$r\ge3$ and put $p=r-1$.  If the $p$ Tur\'an parts in the neighborhood
	have equal size $m=\Delta/p$, then the equitable quotient of
	$qK_1\vee T(\Delta,p)$ yields
	\begin{equation*}
		\rho_q
		=\frac{(p-1)m+\sqrt{(p-1)^2m^2+4q\Delta}}{2}.
	\end{equation*}
	Uniformly for $2\leq q=o(n)$,
	\[
	\rho_q-\rho_1=\Theta_r(q-1).
	\]
	In particular, if $q\to\infty$, then $\rho_q-\rho_1=\Theta_r(q)$.  Now start from
	$K_1\vee T(\Delta,p)$ and insert $h$ edges inside one Tur\'an part, where
	$h$ is allowed by the boundary family.  The Perron vector on each Tur\'an
	part is $\Theta_r(n^{-1/2})$, so the Rayleigh quotient increases by at
	least $c_r h/n$.  Thus an admissible insertion of $h$ edges changes the
	Rayleigh quotient by $\Omega_r(h/n)$.  Hence perturbations with $h$ of
	order $n^{1+s_F}$, when such perturbations are available, act on the
	spectral scale $n^{s_F}$.  Thus $q\asymp n^{s_F}$ is the natural scale at
	which the loss from enlarging the part of size $q$ and the gain from internal
	edges may compete.  When $r=2$, the same boundary scale is relevant because
	the edge estimate is sharp when $a(F)=1$.
\end{remark}

\begin{proposition}\label{pro:r2-scale-main}
	Let $F$ be fixed with $\chi(F)=3$ and $a(F)=1$, and put $q=n-\Delta$.  There is a constant $\gamma_F>0$ such that, uniformly for $\lceil n/2\rceil\le\Delta\le n-1$,
	\begin{equation*}
		q\Delta\le\ex_F(n,\Delta)\le q\Delta+\gamma_Fn^{1+s_F}.
	\end{equation*}
	Consequently, if $q/n^{s_F}\to\infty$, then $\ex_F(n,\Delta)\sim q\Delta$.  Moreover,
	\begin{equation*}
		\sqrt{q\Delta}\le\spex_F(n,\Delta)\le\sqrt{2q\Delta+2\gamma_Fn^{1+s_F}},
	\end{equation*}
	and hence $\spex_F(n,\Delta)=\Theta_F(\sqrt{qn})$ whenever $q\ge Cn^{s_F}$ for a fixed $C>0$.
\end{proposition}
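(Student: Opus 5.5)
The plan is to reuse the maximum-degree partition \eqref{eq:notation-1} and reduce both bounds to controlling the edges inside the neighbourhood $B$ of a maximum-degree vertex.

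\textbf{Lower bounds.} Since $\Delta\ge\lceil n/2\rceil\ge q$, the graph $K_{q,\Delta}=S^{(2)}_{n,\Delta}$ has maximum degree exactly $\Delta$. It is bipartite, hence $F$-free because $\chi(F)=3$. This gives $\ex_F(n,\Delta)\ge q\Delta$ and $\spex_F(n,\Delta)\ge\rho(K_{q,\Delta})=\sqrt{q\Delta}$.

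\textbf{Edge upper bound.} Take $G\in\mathfrak G_{n,\Delta}(F)$, a vertex $u$ with $d_G(u)=\Delta$, and $B=N_G(u)$, $A=V(G)\setminus B$. By \Cref{lem:root-defect-main}, $e_G(A,B)+e_G(A)\le q\Delta$, so
\[
e(G)\le q\Delta+e_G(B).
\]
By \Cref{fact:boundary-exclusion-main}, $G[B]$ is $\partial_cF$-free.

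Since $a(F)=1$ and $r=2$, every member of $\partial_cF$ is bipartite with chromatic number $2$. In \Cref{def:decomposition} applied to the family $\partial_cF$, the relevant ``$r$'' is $1$, so one deletes $r-1=0$ colour classes. Hence $\mathcal M(\partial_cF)=\partial_cF$: in this case the decomposition family is the boundary family itself.

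Therefore, by the choice of $s_F$ and $\Lambda_F$ in \Cref{def:boundary-exponent},
\[
e_G(B)\le\ex(\Delta,\partial_cF)\le\Lambda_F\Delta^{1+s_F}\le\Lambda_Fn^{1+s_F}.
\]
Setting $\gamma_F:=\Lambda_F$ gives the upper bound.

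If $q/n^{s_F}\to\infty$, then, using $\Delta\ge n/2$,
\[
\frac{\gamma_F n^{1+s_F}}{q\Delta}\le\frac{2\gamma_F n^{s_F}}{q}\to0,
\]
which gives $\ex_F(n,\Delta)\sim q\Delta$.

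\textbf{Spectral upper bound.} I would invoke Stanley's inequality $\rho(G)\le\sqrt{2e(G)}$, which follows from $\rho^2\le\sum_i\lambda_i^2=2e(G)$. Combined with the edge bound this gives
\[
\rho(G)\le\sqrt{2q\Delta+2\gamma_Fn^{1+s_F}}.
\]

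\textbf{Order of magnitude.} If $q\ge Cn^{s_F}$, then $n^{1+s_F}\le qn/C$. Together with $n/2\le\Delta\le n$, both bounds are $\Theta_F(\sqrt{qn})$: the lower one is at least $\sqrt{qn/2}$ and the upper one is at most $\sqrt{(2+2\gamma_F/C)qn}$.

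\textbf{Main obstacle.} The only delicate step is justifying $\mathcal M(\partial_cF)=\partial_cF$, equivalently that $\ex(\Delta,\partial_cF)$ is controlled by the same exponent $s_F$ as in \Cref{def:boundary-exponent}. This requires reading \Cref{def:decomposition} with minimum chromatic number $2$, so that no colour classes are deleted. If one preferred the alternative convention where $\mathcal M$ is always obtained by deleting classes of some $(r+1)$-colouring, I would instead note that $\ex(m,\mathcal M(\partial_cF))$ dominates $\ex(m,\partial_cF)$ up to constants in the bipartite case. If that also failed, I would simply take $s_F$ admissible for $\partial_cF$ directly.
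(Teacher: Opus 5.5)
Your proof is correct and follows the paper's argument: the root-defect inequality $e_G(A,B)+e_G(A)\le q\Delta$, the fact that $G[B]$ is $\partial_cF$-free, the admissible estimate $\ex(\Delta,\mathcal M(\partial_cF))\le\Lambda_F\Delta^{1+s_F}$, the bound $\rho(G)^2\le 2e(G)$, and $K_{q,\Delta}$ for both lower bounds. Your observation that $\mathcal M(\partial_cF)=\partial_cF$ when $\chi(F)=3$ (zero colour classes are deleted) is exactly what the paper's phrase ``the admissible boundary estimate'' uses implicitly, so your fallback paragraph is unnecessary --- and its claimed domination runs the wrong way, since decomposition-family members are subgraphs of members of $\partial_cF$ and so have smaller extremal numbers.
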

\begin{proof}
	Let $G$ be $F$-free with $\Delta(G)=\Delta$, and use the maximum-degree partition $V(G)=A\cup B$ from \Cref{eq:notation-1}.  The root-defect inequality gives $e_G(A,B)+e_G(A)\le q\Delta$, while $G[B]$ is $\partial_cF$-free.  Since $\chi(F)=3$, the admissible boundary estimate yields $e_G(B)\le\gamma_Fn^{1+s_F}$ after increasing $\gamma_F$ if necessary.  Thus $e(G)\le q\Delta+\gamma_Fn^{1+s_F}$.  The lower bound comes from $S_{n,\Delta}^{(2)}=K_{q,\Delta}$.  Finally, $\rho(G)^2\le2e(G)$ gives the spectral upper bound, and $\rho(K_{q,\Delta})=\sqrt{q\Delta}$ gives the lower bound.
\end{proof}

\section{The spectral Erd\H{o}s transfer theorem}\label{sec:spec-erdos}

The Erd\H{o}s decomposition theorem (\Cref{thm:erdos_decomposition}) converts decomposition-family bounds into edge-count Tur\'an bounds.  Motivated also by recent edge-to-spectral transfer results \cite{Byrne-Desai-Tait,Fang-Tait-Zhai}, we record a quantitative spectral companion at the level of error terms.  The same decomposition-family edge estimate transfers, through a randomised Perron-vector argument, into a spectral-radius bound at the corresponding scale.  The result is stated for an arbitrary forbidden family, so it can be used beyond the maximum-degree setting of this paper.

\begin{definition}\label{def:family-boundary-exponent}
	Let $\mathcal{L}$ be a finite family of graphs with $\min_{L\in\mathcal{L}}\chi(L)=r+1\ge3$.  Define
	\begin{equation}\label{eq:family-boundary-exponent}
		t_{\mathcal{L}}:=\inf\left\{t\in[-1,1):\limsup_{n\to\infty}\frac{\ex(n,\mathcal M(\mathcal{L}))}{n^{1+t}}<+\infty\right\}.
	\end{equation}
	Then $t_{\mathcal{L}}\in\{-1\}\cup[0,1)$.  We fix $s_{\mathcal{L}}\in[\max\{0,t_{\mathcal{L}}\},1)$ and $\Lambda_{\mathcal L}>0$ such that $\ex(n,\mathcal M(\mathcal{L}))\le\Lambda_{\mathcal L}n^{1+s_{\mathcal{L}}}$ for every $n\ge1$.  The value of $s_{\mathcal{L}}$ is not assumed to be optimal.
\end{definition}

By \Cref{thm:erdos_decomposition} applied to the family $\mathcal{L}$, there is a constant $\Lambda_{\mathcal{L}}>0$ such that
\begin{equation}\label{eq:family-boundary-edge-cap}
	\ex(n,\mathcal{L})\le t(n,r)+\Lambda_{\mathcal{L}}n^{1+s_{\mathcal{L}}}.
\end{equation}
Increasing $\Lambda_{\mathcal L}$ if necessary, we assume that this bound holds for every $n\ge1$.

\begin{lemma}\label{lem:bernoulli-moment-main}
	Let $X$ be a sum of independent Bernoulli random variables with mean $\mu$, and let $1\le \alpha\le2$.  Then
	\begin{equation*}
		\mathbb E X^\alpha\le C_\alpha(\mu^\alpha+\mu).
	\end{equation*}
\end{lemma}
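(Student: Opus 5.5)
The plan is to reduce to the two regimes $\mu\le1$ and $\mu\ge1$, using only the second moment of $X$ together with the elementary inequality $x^\alpha\le x+x^2$ for $x\ge0$ and $1\le\alpha\le2$. Write $X=\sum_i\xi_i$ with independent $\xi_i\sim\mathrm{Bernoulli}(p_i)$ and $\mu=\sum_ip_i$. First compute
\begin{equation*}
	\mathbb E X^2=\operatorname{Var}X+\mu^2=\sum_ip_i(1-p_i)+\mu^2\le\mu+\mu^2 .
\end{equation*}
Both regimes will follow from this identity.

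\emph{Small mean.} Since $X$ is a nonnegative integer, $X^\alpha\le X^2$ whenever $1\le\alpha\le2$, because $X\in\{0\}\cup[1,\infty)$. Hence $\mathbb EX^\alpha\le\mathbb EX^2\le\mu+\mu^2$. This already gives $\mathbb E X^\alpha\le 2(\mu^\alpha+\mu)$ when $\mu\le1$, because then $\mu^2\le\mu$.

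\emph{Large mean.} For $\mu\ge1$, the function $x\mapsto x^{\alpha/2}$ is concave since $\alpha/2\le1$. Jensen's inequality applied to $X^2$ therefore gives
\begin{equation*}
	\mathbb EX^\alpha=\mathbb E\bigl(X^2\bigr)^{\alpha/2}\le\bigl(\mathbb EX^2\bigr)^{\alpha/2}\le(\mu+\mu^2)^{\alpha/2}\le(2\mu^2)^{\alpha/2}\le2\mu^\alpha .
\end{equation*}
Combining the two regimes yields the lemma with $C_\alpha=2$. In fact the Jensen step alone works for every $\mu$, giving $(\mu+\mu^2)^{\alpha/2}\le\mu^{\alpha/2}+\mu^\alpha$, but the term $\mu^{\alpha/2}$ exceeds $\mu$ when $\mu<1$. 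That is why the small-mean case has to use integrality of $X$ instead.

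The only real obstacle is therefore the regime $\mu<1$, where concavity loses. The plan handles it through the integrality observation $X^\alpha\le X^2$. No further tools (Chernoff bounds or Rosenthal-type inequalities) are needed, and the constant $C_\alpha$ is uniform in $\alpha\in[1,2]$.
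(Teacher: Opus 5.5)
Your proof is correct, but it takes a different route from the paper. The paper centers the sum, writing $X-\mu=\sum_i Y_i$. It then applies the von Bahr--Esseen inequality to get $\mathbb E|X-\mu|^\alpha\le 2\sum_i\mathbb E|Y_i|^\alpha\le 4\mu$, using $|Y_i|\le 1$. Finally it uses convexity, $X^\alpha\le 2^{\alpha-1}(\mu^\alpha+|X-\mu|^\alpha)$, so there is no case split, and the resulting constant is $C_\alpha=2^{\alpha+1}$.

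You instead work only with the exact second moment $\mathbb E X^2\le\mu+\mu^2$ and split at $\mu=1$. Integrality ($X^\alpha\le X^2$) handles $\mu\le 1$, and Jensen for the concave map $x\mapsto x^{\alpha/2}$ handles $\mu\ge 1$.

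What each buys:
\begin{itemize}
\item Your argument is fully elementary and self-contained, and it gives the explicit constant $C_\alpha=2$, uniform in $\alpha$.
\item The paper's argument is a one-line appeal to a standard moment inequality. As a by-product it gives the centered fluctuation bound $\mathbb E|X-\mu|^\alpha\le 4\mu$, which is more than the lemma needs.
\end{itemize}

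Both arguments use $\alpha\le 2$ essentially: von Bahr--Esseen holds only for $1\le\alpha\le 2$, and your Jensen step needs $\alpha/2\le 1$. This is exactly the range $\alpha=1+s_{\mathcal L}$ used in the transfer theorem.
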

\begin{proof}
	Write $X-\mu=\sum_i Y_i$, where the $Y_i$ are independent and centered.  The von Bahr--Esseen inequality gives
	\begin{equation*}
		\mathbb E|X-\mu|^\alpha\le 2\sum_i\mathbb E|Y_i|^\alpha\le 4\mu .
	\end{equation*}
	Since $X^\alpha\le 2^{\alpha-1}(\mu^\alpha+|X-\mu|^\alpha)$, the claim follows.
\end{proof}

\begin{proof}[Proof of \cref{thm:spec-erdos-transfer}]
	By \eqref{eq:family-boundary-edge-cap}, every $\mathcal{L}$-free graph on $n$ vertices has at most
	\begin{equation}\label{eq:family-edge-transfer}
		t(n,r)+\Lambda_{\mathcal{L}}n^{1+s_{\mathcal{L}}}
		\le \frac{1}{2}\left(1-\frac{1}{r}\right)n^2+\Lambda_{\mathcal{L}}n^{1+s_{\mathcal{L}}}
	\end{equation}
	edges.  Since $\rho(T(n,r))=(1-\frac{1}{r})n+O_r(1)$, it is enough to prove $\rho(H)\le (1-\frac{1}{r})n+O_{\mathcal{L}}(n^{s_{\mathcal{L}}})$.
	
	Let $\boldsymbol{z}\ge0$ be a unit Perron vector of $H$.  Write $Y:=\sum_{v\in V(H)}z_v$ and $z_{\hat{v}}:=\max_{v\in V(H)}z_v$.  If $\rho(H)\le(1-\frac{1}{r})n/2$, the desired estimate is immediate.  Hence assume $\rho(H)>(1-\frac{1}{r})n/2$.  At a vertex $\hat{v}$, the eigenequation gives $\rho(H)z_{\hat{v}}=\sum_{u\in N_H(\hat{v})}z_u\le Y$, and therefore
	\begin{equation}\label{eq:max-entry-bound}
		z_{\hat{v}}\le \frac{2r Y}{(r-1)n}.
	\end{equation}
	
	Form a random set $U\subseteq V(H)$ by including vertices independently with probabilities $\Pr(v\in U)=z_{\hat{v}}^{-1}z_v$.  This is valid because $z_{\hat{v}}^{-1}z_v\le 1$.  Let $X:=|U|$ and $\mu:=\mathbb{E}X=z_{\hat{v}}^{-1}Y$.  Since $H[U]$ is again $\mathcal{L}$-free, conditioning on $X=|U|$ and applying \eqref{eq:family-edge-transfer} gives
	\begin{equation*}
		\mathbb{E}e(H[U])
		\le \frac{r-1}{2r}\mathbb{E}X^2+\Lambda_{\mathcal{L}}\mathbb{E}X^{1+s_{\mathcal{L}}}.
	\end{equation*}
	On the other hand,
	\begin{equation*}
		\mathbb{E}e(H[U])
		=z_{\hat{v}}^{-2}\sum_{uv\in E(H)}z_uz_v
		=\frac{z_{\hat{v}}^{-2}\rho(H)}{2}.
	\end{equation*}
	Using $\mathbb{E}X^2\le \mu^2+\mu$ and
	\Cref{lem:bernoulli-moment-main} with $\alpha=1+s_{\mathcal L}$, and then
	multiplying by $2z_{\hat v}^{\,2}$, we obtain
	\begin{equation}\label{eq:transfer-rayleigh-final}
	\rho(H)
	\le \left(1-\frac1r\right)Y^2
	 +O_{\mathcal L}(z_{\hat v}Y)
	 +O_{\mathcal L}\!\left(
	 Y^{1+s_{\mathcal L}}z_{\hat v}^{\,1-s_{\mathcal L}}
	 \right).
	\end{equation}
	By \eqref{eq:max-entry-bound} and $Y^2\le n$,
	\[
	z_{\hat v}Y
	\le \frac{2r}{r-1}\frac{Y^2}{n}
	=O_r(1),
	\]
	and
	\begin{align*}
	Y^{1+s_{\mathcal L}}z_{\hat v}^{\,1-s_{\mathcal L}}
	&\le
	\left(\frac{2r}{r-1}\right)^{1-s_{\mathcal L}}
	\frac{Y^2}{n^{1-s_{\mathcal L}}}\\
	&=O_r(n^{s_{\mathcal L}}).
	\end{align*}
	Consequently,
	\[
	\rho(H)
	\le \left(1-\frac1r\right)n
	   +O_{\mathcal L}(n^{s_{\mathcal L}}).
	\]
	Since $\rho(T(n,r))=(1-1/r)n+O_r(1)$, this is equivalent to
	\eqref{eq:spec-erdos-cap} after enlarging $C_{\partial}$. This completes the proof.
\end{proof}

%\medskip
%The proof of \Cref{thm:spec-erdos-transfer} applies to arbitrary finite
%forbidden families.

\section{Concluding remarks}

This paper initiates the study of edge and spectral Tur\'an problems with prescribed maximum degree, for which the relevant complete multipartite graphs are generally unbalanced.  For a clique as the forbidden graph, we obtain exact edge and spectral results.  For general forbidden graphs, we establish stability results and a spectral transfer from decomposition-family edge bounds.  The problems below give directions for further study.

\begin{problem}\label{prob:ordinary-regime-exact}
	Let $F$ have $\chi(F)=r+1\ge3$ and $a(F)>1$.  Determine the exact edge and spectral maximizers with prescribed large maximum degree.  
	%In particular, determine the structure of the graph attached to a balanced $r$-partite graph.
\end{problem}

The exact clique theorem suggests the following spectral extension to
color-critical forbidden graphs.

\begin{conjecture}\label{conj:color-critical-exact-spectral}
	Let $F$ be a fixed color-critical graph with $\chi(F)=r+1\ge3$, and let $s_F$ be an admissible boundary exponent fixed in 	\Cref{def:boundary-exponent}.  For sufficiently large $n$ and every	$\left\lceil (r-1)n/r\right\rceil\le\Delta\le n-\Theta(n^{s_F})$, every $n$-vertex $F$-free graph $G$ with $\Delta(G)=\Delta$ satisfies $\rho(G)\le\rho\bigl(S_{n,\Delta}^{(r)}\bigr)$. Equality holds if and only if $G\cong S_{n,\Delta}^{(r)}$.
\end{conjecture}

The second direction is to replace a single maximum-degree constraint by more detailed local information.

\begin{problem}\label{prob:refined-local-constraints}
Develop Tur\'{a}n and spectral Tur\'{a}n theory under more refined local constraints, such as prescribed degree sequences, a prescribed set of maximum-degree vertices, or simultaneous edge and maximum-degree constraints. Determine how these constraints affect the extremal graphs and the corresponding stability statements.
\end{problem}

Let $tF$ denote the vertex-disjoint union of $t$ copies of $F$.  Then
$a(tF)=t\,a(F)$.  Indeed, every independent set whose deletion makes $tF$
$r$-colorable restricts to such a set in each component, and the union of
minimum such sets in the components gives the reverse inequality.

\begin{problem}\label{prob:disjoint-copies}
	Let $F$ satisfy $a(F)=1$ and let $t\ge2$.  Determine the exact edge and
	spectral maximizers with prescribed large maximum degree among graphs
	containing no $t$ vertex-disjoint copies of $F$.
\end{problem}

\section*{Declaration of competing interest}
The author declares no competing interests.

%\section*{Declaration of generative AI and AI-assisted technologies in the manuscript preparation process}
%The author used AI-assisted tools only for language editing, editorial organization, and LaTeX consistency checks.  The author reviewed the manuscript and is responsible for all mathematical content.

\section*{Acknowledgments}
The author would like to thank Professor Yongtang Shi and Professor Shuchao Li for their valuable comments and suggestions on this manuscript.

\end{document}